\newtheorem {theorem}{Theorem}[section]
\newtheorem {lemma}[theorem]{{\bf Lemma}}
\newtheorem {corollary}[theorem]{{\bf Corollary}}
\newtheorem {prop}[theorem]{{\bf Proposition}}
\theoremstyle{remark}
\newtheorem {remark}{{\bf Remark}}[section]
\theoremstyle{problem}
\theoremstyle{definition}
\theoremstyle{plain} \numberwithin {equation}{section}
\def\XXint #1#2#3{{\setbox 0=\hbox {$#1{#2#3}{\int }$}
\vcenter {\hbox {$#2#3$}}\kern -.5\wd 0}}
\def\b#1{\overline{#1}}
\def\M{\mathcal{M}} 
\def\mclH{\mathcal{H}}
\def\R{{\mathbb{R}}}
\def\B{{\mathcal{B}}}
\def\BB{\mathfrak{B}}
\def\cH{\check{H}}
\def\cfH{\check{\mathfrak{H}}}
\def\cW{\check{W}}
\def\cmB{\check{\mathcal{B}}}
\def\mfW{\mathfrak{W}}
\def\mfA{\mathfrak{A}}
\def\msC{\mathscr{C}}
\def\mfC{\mathfrak{C}}
\def\G{{\mathcal{G}}}
\def\ba{\begin{array}}
\def\ea{\end{array}}
\def\be{\begin{equation}}
\def\ee{\end{equation}}
\def\bes{\begin{mysubequations}}
\def\ees{\end{mysubequations}}
\def\e{\varepsilon}
\def\g{\nabla}
\def\a{\alpha}
\def\b{\beta}
\def\d{\delta}
\def\O{\Omega}
\def\Div{\text{div}}
\def\f{\frac}
\def\p{\partial}
\def\o{\omega}
\def\tpsi{\tilde{\psi}}
\def\f{\frac}
\def\p{\partial}
\def\o{\omega}
\begin{document}

\vspace{1cm}

\title[Subsonic Flows Past a Wall]{Two Dimensional Subsonic Euler Flows\\
 Past a Wall or a Symmetric Body$^*$}

\author[Chao Chen, \ \ Lili Du, \ \ Chunjing Xie,\ \ Zhouping Xin]{Chao Chen$^{\lowercase{a,1}}$, \ \ Lili Du$^{\lowercase{b,2}}$, \ \ Chunjing Xie$^{\lowercase{c,3}}$\  \ Zhouping Xin$^{\lowercase{d,4}}$}
\thanks{$^*$ Chen is supported by NSFC grant 11301079. Du is supported in part by NSFC grant 11171236, PCSIRT (IRT1273) and Sichuan Youth Science and Technology Foundation 2014JQ0003. Xie is
supported in part by NSFC grant 11241001, Shanghai Chenguang
Program,  Shanghai Pujiang  program 12PJ1405200, and the Program for
Professor of Special Appointment (Eastern Scholar) at Shanghai
Institutions of Higher Learning. Xin is supported in part by Zheng
Ge Ru Foundation, Hong Kong RGC Earmarked Research Grants
CUHK4041/11P, and CUHK4048/13P, a Focus Area Grant from The Chinese
University of Hong Kong, and a CAS-Croucher Joint Grant.}
\thanks{$^1$ E-Mail: chenchao\_math@sina.cn. $^2$ E-Mail: dulili@scu.edu.cn. $^3$ E-mail: cjxie@sjtu.edu.cn. $^4$ E-mail:zpxin@ims.cuhk.edu.hk }
\maketitle
\begin{center}
$^a$ School of Mathematics and Computer Science, Fujian Normal University,

         Fuzhou, Fujian 350108, P. R. China.

$^b$ Department of Mathematics, Sichuan University,

          Chengdu 610064, P. R. China.

           $^c$ Department of Mathematics, Institute of Natural Sciences,

           Ministry of Education Key Laboratory of Scientific and Engineering Computing,

           Shanghai Jiao Tong University, Shanghai, 200240, P. R. China.

$^d$ The Institute of Mathematical Sciences and Department of Mathematics,

The Chinese University of Hong Kong, Shatin, Hong Kong.

\end{center}


\begin{abstract} The existence and uniqueness of two dimensional steady
compressible Euler flows past a wall or a symmetric body are established. More
precisely, given positive convex horizontal veloicty in the upstream, there
exists a critical value $\rho_{cr}$ such that if the incoming density
in the upstream is larger than  $\rho_{cr}$, then
there exists a subsonic flow  past a wall. Furthermore,
$\rho_{cr}$ is critical in the sense that there is no such subsonic
flow if the density of the incoming flow is less than
$\rho_{cr}$. The subsonic flows possess large vorticity and positive
horizontal velocity above the wall except at the corner points on
the boundary. Moreover, the existence and uniqueness of a two
dimensional subsonic  Euler flow past a symmetric body are also
obtained when the incoming velocity field is a general small
perturbation of a constant velocity field and the density of the incoming flow is larger than a critical value. The
asymptotic behavior of the flows is obtained with the aid of some
integral estimates for the velocity field and its far field states.
\end{abstract}

\

\begin{center}
\begin{minipage}{5.5in}
2000 Mathematics Subject Classification: 35J25; 35J70;
35Q35; 76H05.

\

Key words: Existence, Uniqueness,  Subsonic,
Euler flows, Asymptotic behavior,  Wall.
\end{minipage}
\end{center}

\


\section{Introduction and Main Results}
One of the most important problems in aerodynamics is to study flows
past a body. Mathematical investigation for this problem has a long
history. When the flow is irrotational, the study on subsonic flows
past a body is quite mature. The existence of two dimensional
subsonic irrotational flows past a smooth body with small free
stream Mach number was obtained by Shiffman \cite{Shiffman}. When
the free stream Mach number is less than a critical number, Bers
\cite{Bers1} proved the existence of two dimensional subsonic
irrotational flows around a general body and also showed that the
maximum of Mach numbers approaches to one as the free stream Mach
number approaches to the critical value. The uniqueness and
asymptotic behavior of subsonic irrotational plane flows were
studied in \cite{Finn572d}. The existence of three dimensional
subsonic irrotational flows around a smooth body were established in
\cite{Finn573d, Dong} when the free stream Mach number is less than
a critical number. The fine properties of two dimensional smooth
subsonic-sonic irrotational flows around a body were investigated in
\cite{Gilbarg54}. The existence of weak solutions for subsonic-sonic
flows by a compensated compactness method was obtained in
\cite{CDSW, HWW}. A significant result by Morawetz shows that, in
general, smooth transonic flows past a profile are unstable with
respect to small perturbations for the profile, see \cite{Morawetz1,
Morawetz2, Morawetz3}. Hence one has to deal with transonic flows
with discontinuities where in general the flows have non-zero
vorticity in the subsonic region.

The vorticity in compressible ideal flows
is important not only mathematically but also physically. The main
purpose of this paper is to investigate the existence and uniqueness
of two dimensional subsonic Euler flows with non-zero vorticity past
a wall or a symmetric body. As stated in \cite[p.12]{Bers},
``Closely related to the flow around a profile is the flow past a
wall". Subsonic Euler flows with non-zero vorticity in a physical
domain were first established in \cite{XX3} where Xie and Xin studied
subsonic Euler flows through an infinitely long smooth nozzle. The major difficulty for the steady Euler system with non-zero vorticity is that the Euler system is a hyperbolic-elliptic coupled system for subsonic flows. In \cite{XX3}, a physical boundary condition for the hyperbolic mode is proposed in the upstream of the flows and the stream function formulation is used to solve the hyperbolic mode \cite{XX3} so that the steady Euler
system is reduced into a single second order equation with memory. This approach was generalized to subsonic
flows with non-zero vorticities in nozzles in various settings, such
as the flows in periodic nozzles and axially symmetric nozzles, the
non-isentropic flows, see \cite{CX1, DD, DD1, CDX, DL} and
references therein.  In particular, the subsonic Euler flows with
stagnation points and large vorticity in nozzles were studied in \cite{DX,
DXX}.

An attempt for the well-posedness theory for subsonic
Euler flows in half plane was made in \cite{CJ} via the stream
function formulation. However, as mentioned
in \cite[Remark 6.1]{CJ}, the absence of stagnation point in the flow region was not obtained in \cite{CJ} so that the author in \cite{CJ} failed to get the equivalence between the stream function formulation and the Euler system, in particular, the uniqueness of the solutions of the original Euler system. Furthermore, the crucial techniques in \cite{CJ} rely on the estimate for elliptic equations which are small perturbations of the Laplace equation in half plane, so even for the reduced problem for the stream function, the existence and uniqueness of the solution for the problem of the stream function were achieved  in \cite{CJ} only when the incoming velocity is a sufficiently smaller perturbation of a small constant state and the incoming density is a large constant.    Our aim in this paper is to prove the existence of subsonic Euler flows, in particular, the flows with large vorticity, as long as the density in the upstream is larger than a critical value. We also prove that subsonic flows above a wall do not have stagnation points and the streamlines of the flows have simple topological structure. The region above the wall can be approximated by a sequence of nozzles so that the analysis in \cite{XX3,DX,DXX} for general quasilinear equation with memory term helps solve these approximated problems. However, the estimates in \cite{XX3,DX,DXX} depend on the height of the nozzles, so one of the key issues in this paper is to prove a series of uniform estimates independent of the nozzle height.


Two-dimensional steady isentropic ideal flows are governed by the
following Euler system \be\label{a0}
\left\{\ba{l} \p_{x_1}(\rho u) +\p_{x_2}(\rho v)=0,\\
\p_{x_1}(\rho u^2)+\p_{x_2}(\rho uv)+\p_{x_1}p=0,\\
\p_{x_1}(\rho uv)+\p_{x_2}(\rho v^2)+\p_{x_2}p=0, \ea\right. \ee
where $(u,v)$ is the velocity, $\rho$ is the density, and $p=p(\rho)$
is the pressure of the flow. In this paper, for the simplicity of presentation,  we consider the
polytropic gas for which the equation of state is $ p=
\rho^{\gamma}$ with the constant $\gamma>1$ called
the adiabatic exponent. The local sound speed and Mach number of the flow are defined to be $$
c(\rho)=\sqrt{p'(\rho)}=\sqrt{\gamma\rho^{\gamma-1}}\,\, \text{and}\,\, M=\f{\sqrt{u^2+v^2}}{c(\rho)}, $$ respectively.
The flow is said to be subsonic if $M<1$, and supersonic if $M>1$.

Consider the flow past a wall $\Gamma=\{(x_1, f(x_1)):x_1\in \mathbb{R}\}$, i.e., we study the solution of \eqref{a0} in  $\O$ defined by
 \be\label{a1}
\Omega= \left\{(x_1,x_2)\in\R^2|\ x_2>f(x_1),\ \ -\infty< x_1 <
+\infty\right\}. \ee
 $f(x_1)$ is assumed to be a nonnegative continuous function
satisfying \be\label{a23}
f(x_1)>0\quad \text{for}\,\, x_1\in (0,1)\quad \text{and}\quad  f(x_1)\equiv 0 \quad\text{for}\quad
x_1\in (-\infty, 0]\cup [1, \infty).\ee Furthermore, the curve $\{(x_1,f(x_1)): x_1\in [0, 1]\}$ is a $C^{1, \alpha}$ smooth curve.
\begin{figure}[!h]
\includegraphics[width=150mm]{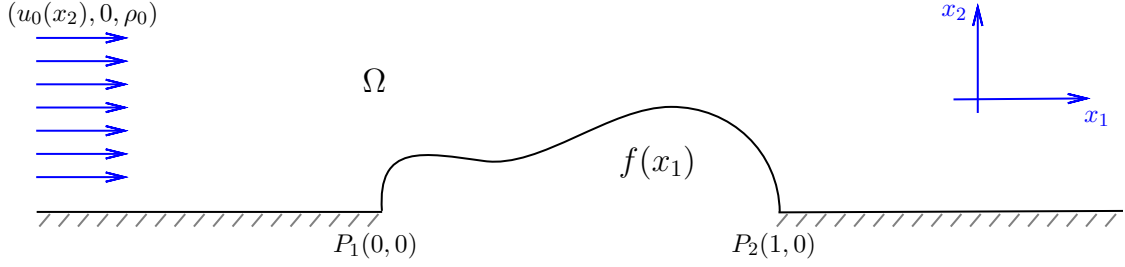}
\caption{Subsonic flows past a wall}\label{f1}
\end{figure}

The solid wall $\Gamma$ is assumed to be impermeable and thus,
\be\label{a01} (u,v)\cdot \vec n=0 \ \ \ \text{on}\ \ \ \Gamma,\ee
where $\vec n$ is the unit outward normal to the boundary $\Gamma$.
Furthermore, the flow velocity is prescribed in the upstream as
 \be\label{a02}
(u(x_1,x_2),v(x_1,x_2))\rightarrow (u_0(x_2), 0)\ \ \ \text{as}\ \ \
x_1\rightarrow-\infty.\ee
Finally, the density in the upstream is given as follows
\begin{equation}\label{upMach}
\rho(x_1,x_2)\rightarrow \rho_0\ \ \ \text{as}\ \ \
x_1\rightarrow-\infty
\end{equation}
where $\rho_0$ is a constant. Furthermore, if $f$ is not differentiable at $x_1=0$ and $1$, then  the flow is required to satisfy the
Kutta-Joukowski condition (cf. \cite{Bers}) at the corner points
$\{P_1, P_2\}$ where $P_1=(0,0)$ and $P_2=(1,0)$, i.e., the flow velocity is continuous at $\{P_1,
P_2\}$.

 Let us state the main results as follows.
\begin{theorem}\label{th1}  Suppose that  the  upstream horizontal velocity $u_0(x_2)$ in
\eqref{a02} satisfies that \be\label{a2} u_0(x_2)\in C^2(\R^+),\ \ \
u_0(x_2) >0, \ \ u''_0(x_2)\geq 0, \ \ u_0'(0)\leq 0,\ \ \
\lim_{x_2\rightarrow+\infty}u'_0(x_2)=0, \ee and there exists a
$\bar u>0$ such that  \be\label{a3}
\lim_{x_2\rightarrow+\infty}u_0(x_2)=\bar u, \ee then there exists a
critical value $\rho_{cr}>0$, such that if the incoming density $\rho_0$ in \eqref{upMach}
is larger than $\rho_{cr}$, then there exists a
uniformly subsonic flow $(\rho,u,v)\in \left(C^{1,\a}(\O)\cap
C^\beta(\bar\O)\right)^3$ for some $\beta\in (0, \alpha)$, which satisfies the Euler system
\eqref{a0}, the boundary conditions \eqref{a01}-\eqref{upMach}, and
Kutta-Joukowski condition at the corner points $\{P_1, P_2\}$.
Moreover,
\begin{enumerate}
\item[(1)] the flow is uniformly subsonic \be\label{a4} \sup_{\bar
\O}(u^2+v^2-c^2(\rho))<0, \ee and the horizontal velocity is
positive except at the corners \be\label{a5} u>0\ \ \ \ \text{in}\ \
\ \bar\O\backslash\{P_1\cup P_2\}\ \ \ \ \text{ and}\ \ u=0\ \
\text{ at}\ \ P_1\cup P_2;\ee

\item[(2)] the flow satisfies \be\label{a6}
\|(\rho u-\rho_0u_0(x_2),\rho v)\|_{L^2(\O)}\leq C \ee for some
$C>0$ and has the following asymptotic behavior in far fields,
\be\label{a7} \rho\rightarrow\rho_0,\quad\quad (u,v)\rightarrow
(u_0(x_2),0), \ee \be\label{a8} \g\rho\rightarrow0,\quad\quad \g
u\rightarrow (0,u_0'(x_2)),\quad\quad \g v\rightarrow0,
 \ee as $|x_1|\rightarrow \infty$ uniformly for $x_2\in
K_1\Subset(0,+\infty)$,  and \be\label{a16} u\rightarrow\bar u,\ \ \
v\rightarrow 0,\ \ \ \rho\rightarrow\rho_0, \quad \text{as}
\quad x_2\rightarrow+\infty;\ee

\item[(3)] the subsonic flow satisfying Euler
system \eqref{a0}, boundary conditions \eqref{a01}-\eqref{upMach}, \eqref{a5},  and
asymptotic behavior \eqref{a6}-\eqref{a8} is unique.

\item[(4)] $\rho_{cr}$ is the critical incoming density for the existence of
subsonic flow past a wall in the following sense: either
\be\label{a9} \sup_{\bar\O}\f{|(u,v)|}{c(\rho)}\rightarrow1\ \ \ \
\text{as}\ \ \rho_0\rightarrow \rho_{cr}, \ee or there is no $\sigma >0$,
such that for all  $\rho_0\in(\rho_{cr}-\sigma,\rho_{cr})$ there are Euler
flows satisfying \eqref{a0}, subsonic condition \eqref{a4} and
asymptotic behavior \eqref{a6}-\eqref{a8}, and
$$
\sup_{\rho_0\in(\rho_{cr}-\sigma,
\rho_{cr})}\sup_{\bar\O}\f{|(u,v)|}{c(\rho)}<1.
$$
\end{enumerate}
\end{theorem}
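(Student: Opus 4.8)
The plan is to reformulate the Euler system \eqref{a0} via a stream function and reduce it to a single quasilinear second order equation that is uniformly elliptic precisely where the flow is subsonic. By the mass equation, I introduce $\psi$ with $\rho u=\p_{x_2}\psi$ and $\rho v=-\p_{x_1}\psi$; the impermeability condition \eqref{a01} forces $\psi$ to be constant along $\Gamma$, so I normalize $\psi\equiv 0$ on $\Gamma$, while \eqref{a02}--\eqref{upMach} prescribe the upstream trace $\psi\to\int_0^{x_2}\rho_0u_0(s)\,ds$ as $x_1\to-\infty$. Since the gas is isentropic, the Bernoulli function is constant along streamlines, hence a function of $\psi$ fixed by the inlet data; together with Bernoulli's law this expresses the density as $\rho=\rho(|\g\psi|^2,\psi)$ and, writing the vorticity as $-\Div(\g\psi/\rho)$, yields an equation of the form $\Div(\g\psi/\rho)=\rho\,g(\psi)$ with $\rho=\rho(|\g\psi|^2,\psi)$, where $g$ is determined by the inlet Bernoulli function. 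The Kutta-Joukowski condition at $P_1,P_2$ enters as boundedness of $\g\psi$ at the corners. A preliminary but essential task, in view of the gap pointed out for \cite{CJ}, is to verify that once a smooth solution $\psi$ \emph{without stagnation points} is produced, this formulation is genuinely equivalent to \eqref{a0}, so that it recovers a bona fide Euler flow and yields the uniqueness claim.

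Because $\O$ is unbounded in both directions, I approximate it by truncated nozzle-type domains $\O_L=\O\cap\{-L<x_1<L,\ x_2<L\}$, closing off the top by a horizontal wall on which $\psi$ equals the upstream flux $\int_0^L\rho_0u_0(s)\,ds$, with inlet and outlet conditions on the vertical sides $x_1=\pm L$. On each $\O_L$ the reduced problem is exactly of the type treated in \cite{XX3, DX, DXX}: a quasilinear equation with streamline-history term in a nozzle. To preserve uniform ellipticity independently of the state, I first solve a subsonically truncated version in which $\rho(|\g\psi|^2,\psi)$ is modified for large $|\g\psi|^2$ so that the principal part stays uniformly elliptic, obtaining $\psi_L$ from the existing nozzle theory together with elliptic Schauder estimates up to the $C^{1,\a}$ boundary and at the corner points.

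The heart of the argument, and the main obstacle, is to obtain a priori estimates on $\psi_L$ that are \emph{independent of the height and length $L$}, since the constants in \cite{XX3, DX, DXX} degenerate as the nozzle grows. Here the structural hypotheses \eqref{a2} are decisive: $u_0''\ge0$ together with $u_0'(0)\le0$ and $\lim u_0'=0$ force $u_0'\le0$ on $\R^+$, which fixes the sign of $g$ and drives a comparison principle giving $0\le\psi_L\le\int_0^{x_2}\rho_0u_0\,ds$; this yields positivity of $u$ and the absence of stagnation points away from the corners, establishing \eqref{a5}. Next I establish a uniform gradient bound from a global energy estimate of the form \eqref{a6} for the truncated flux $\rho u-\rho_0u_0$, $\rho v$, combined with interior and boundary gradient estimates; this simultaneously shows that when $\rho_0$ is large enough the truncation is never active, so the genuine equation is solved and the uniform subsonic bound \eqref{a4} holds. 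With these $L$-independent bounds, a diagonal compactness argument passes to the limit $L\to\infty$, producing a solution $\psi$ on all of $\O$; the uniform energy bound survives in the limit, and a local energy (Caccioppoli-type) decay argument in the far field then delivers the asymptotics \eqref{a7}--\eqref{a16}.

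For uniqueness (3), I transfer the problem back through the established equivalence: two Euler flows with the prescribed decay \eqref{a6} have stream functions solving the same elliptic boundary value problem, and subtracting and testing against their difference—using uniform ellipticity and \eqref{a6} to kill the boundary terms at infinity—forces them to coincide. Finally, for the criticality (4), I define $\rho_{cr}$ as the infimum of those $\rho_0$ admitting a uniformly subsonic solution with supremum Mach number bounded away from $1$ on a neighborhood, and run a continuity argument in $\rho_0$: the truncation-plus-estimate scheme above shows this set is open, so at $\rho_{cr}$ either the supremum Mach number saturates to $1$, giving the first alternative \eqref{a9}, or no uniformly subsonic solution persists just below $\rho_{cr}$, giving the second. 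The delicate point throughout is that every constant must be tracked to be independent of $L$, and, for (4), to depend on $\rho_0$ only through the subsonic margin.
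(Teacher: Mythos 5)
Your overall skeleton (stream-function reduction, subsonic truncation, nozzle approximation, uniform estimates, passage to the limit, energy-method uniqueness, continuity argument for $\rho_{cr}$) matches the paper's strategy, but the central uniform estimate in your third paragraph is wrong, and it is precisely the step on which the whole paper turns. You claim that convexity gives, via a comparison principle, the bound $0\le\psi_L\le\bar\psi_L(x_2)=\rho_0\int_0^{x_2}u_0(s)\,ds$, and that this "yields positivity of $u$" and feeds the rest of the argument. The two-sided bound is indeed available (the paper gets $0\le\psi_L\le m_L$ by the maximum principle in Lemma 3.2, and $\psi_L\le\bar\psi_L$ in Proposition 3.4 by an energy argument that uses $u_0''\ge0$ through the sign of the zeroth-order coefficient $d_2$), but by itself it is useless: every later step requires a bound on the \emph{difference} $\Psi_L=\psi_L-\bar\psi_L$ that is uniform in $L$ and of size $O(\rho_0)$, whereas your bound only gives $|\Psi_L|\le\bar\psi_L(x_2)$, which grows linearly in $x_2$ and hence like $L$ near the artificial top boundary. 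The paper closes this gap with an idea your proposal is missing: it constructs an auxiliary exact shear-flow solution $\hat\psi_L$ on the strip $\{J<x_2<L\}$ above the bump, $J=\sup f$ (Proposition 3.3, solving \eqref{b46}--\eqref{b47} for a triple $(\chi,\rho_{1,L},u_{1,L})$ whose mass flux matches $m_L$), proves the lower barrier $\psi_L\ge\hat\psi_L$ there (Proposition 3.4), and shows $0\le\bar\psi_L-\hat\psi_L\le C\rho_0$ with $C$ depending only on $J$ and $\max u_0$. Only this yields $-C\rho_0\le\Psi_L\le0$ (Corollary 3.5), from which the uniform H\"older gradient bound $\|\Psi_L\|_{C^{1,\beta}(\overline{\Omega_L})}\le\msC\rho_0$ follows, and then the removal of the subsonic truncation for large $\rho_0$, since $|\nabla\psi_L|\le\msC\rho_0$ while the sonic threshold satisfies $\Sigma(\B_L)\geq C\rho_0^{(\gamma+1)/2}$ and grows faster in $\rho_0$.

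Two further consequences of the same gap. First, your order of estimates is inverted: you want to extract the pointwise gradient bound (hence subsonicity for large $\rho_0$) from the global $L^2$ energy estimate, but in two dimensions an $L^2$ gradient bound does not control $\Psi_L$ in $L^\infty$, and in the paper the energy estimate (Lemma 3.7) itself \emph{uses} the already-established uniform $C^1$ bound of $\Psi_L$ to control the boundary terms in \eqref{L2-0}; the logical order is $L^\infty$ bound on the difference $\Rightarrow$ $C^{1,\beta}$ bound $\Rightarrow$ $L^2$ gradient estimate, not the reverse. Second, positivity of $u$ does not follow from any $L^\infty$ bound on $\psi_L$: the paper proves \eqref{a5} only after the far-field asymptotics $|\nabla\Psi|\to0$ (which need both the $L^2$ and $C^{1,\beta}$ bounds) give $\partial_{x_2}\psi>\delta$ outside a large ball, and then applies the argument of \cite[Lemma 2]{XX3} together with $FF''+(F')^2\ge0$ (convexity again) in the remaining bounded region. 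As written, your scheme cannot produce the uniform-in-$L$ bounds it needs, so the passage to the limit and everything downstream of it is unsupported.
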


If the convexity assumption on $u_0$ in \eqref{a2} is removed, then
the following theorem holds.
\begin{theorem}\label{th2} Suppose that the
horizontal velocity $u_0(x_2)$ in the upstream satisfies that
\be\label{a04} u_0(x_2)\in C^2(\R^+),\ \ \ u_0(x_2)
>0, \  \ \ u_0'(0)\leq0,\ \ \
\lim_{x_2\to \infty}u_0(x_2)=\bar u, \ee for some constant $\bar
u>0$.  Furthermore, there exist constants $k>1$ and $\e > 0$ such
that \be\label{a05}
 \left|\frac{d^i u_0(x_2)}{dx_2^{i}} \right|\leq\f\e{(1+x_2)^{k+i}}\ \ \ \text{for } i=1, 2\ \
\text{ and}\ \ x_2>0.\ \ee There exist an $\e_0>0$  and a critical
value $\rho_{cr}>0$ such that if $\e$ in \eqref{a05} satisfies
$\e\in (0, \e_0)$, and the incoming density $\rho_0$ in the upstream
is larger than $\rho_{cr}$, then the problem \eqref{a0},
\eqref{a01}-\eqref{upMach} admits
 a uniformly subsonic flow $(\rho,u,v)\in
\left(C^{1,\a}(\O)\cap C^\beta(\bar\O)\right)^3$ for some $\beta\in (0, \alpha)$ satisfying Kutta-Joukowski condition at the corner points $\{P_1, P_2\}$. Moreover, the
properties (1)-(4) in Theorem \ref{th1} hold.
\end{theorem}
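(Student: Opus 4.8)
The plan is to adapt the proof of Theorem \ref{th1}, replacing the convexity-based monotonicity arguments by perturbation estimates powered by the smallness of $\e$. Since mass is conserved, I would introduce a stream function $\psi$ with $\psi_{x_2}=\rho u$ and $\psi_{x_1}=-\rho v$; then \eqref{a01} says $\Gamma$ is a level set of $\psi$, and \eqref{a02}--\eqref{upMach} force $\psi\to\Psi_0(x_2):=\int_0^{x_2}\rho_0 u_0(s)\,ds$ as $x_1\to-\infty$. Bernoulli's law $\tfrac12(u^2+v^2)+h(\rho)=B$, with $h'(\rho)=c^2(\rho)/\rho$, is constant along streamlines, so $B=B(\psi)$ is fixed by the inlet data through $B(\psi)=\tfrac12\,u_0\!\big(\Psi_0^{-1}(\psi)\big)^2+h(\rho_0)$, and the momentum equations reduce to the single quasilinear second-order equation $\Div\big(\nabla\psi/\rho\big)=-\rho\,B'(\psi)$, where $\rho=\rho(|\nabla\psi|^2,\psi)$ is recovered implicitly from Bernoulli and depends smoothly and uniformly elliptically on $\nabla\psi$ exactly in the subsonic region. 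This is the equation with memory of \cite{XX3,DX,DXX}; its equivalence with the Euler system \eqref{a0} requires $u>0$, so that $\psi$ is strictly increasing in $x_2$ and $B(\psi)$ is single-valued, which is where the hypotheses enter.

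Following Theorem \ref{th1}, I would truncate the nonlinearity above a fixed subsonic cutoff to make the equation globally uniformly elliptic, exhaust $\O$ by the nozzles $\ol=\O\cap\{x_2<L\}$, and solve the truncated problem on each $\ol$ with the wall condition on $\Gamma$, a level-set condition on $\{x_2=L\}$, and the inlet profile $\Psi_0$; the $C^{1,\alpha}$ wall and the corners are handled exactly as in Theorem \ref{th1} (Kutta--Joukowski), yielding solutions $\psi_L$. The decisive step, flagged in the introduction, is to make these bounds uniform in $L$. Setting $\phi:=\psi-\Psi_0$, one has $(\rho u-\rho_0u_0,\rho v)=(\phi_{x_2},-\phi_{x_1})$, so $\|(\rho u-\rho_0u_0,\rho v)\|_{L^2}=\|\nabla\phi\|_{L^2}$ and \eqref{a6} is an energy bound for $\phi$. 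Since $\Psi_0$ itself solves the equation in the wall-free upstream, $\phi$ satisfies a uniformly elliptic equation whose forcing comes from the vorticity $\rho B'(\psi)$ — of size $O(\e)$ and square-integrable by \eqref{a05} with $k>1$ — together with the compactly supported effect of the wall. An energy estimate then gives $\|\nabla\phi\|_{L^2(\ol)}\le C$ uniformly in $L$, interior and boundary Schauder theory upgrade this to uniform $C^{1,\alpha}_{\mathrm{loc}}$ bounds, and a diagonal Arzel\`a--Ascoli argument lets $L\to\infty$ to produce a solution on all of $\O$.

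The smallness $\e<\e_0$ supplies, in place of convexity, the two structural facts convexity gave in Theorem \ref{th1}: the positivity \eqref{a5} of $u$ (hence equivalence with Euler and uniqueness) and the closing of the subsonic bound \eqref{a4}. Under \eqref{a04}--\eqref{a05} the inlet data is an $O(\e)$ perturbation of the uniform state $(\rho_0,\bar u,0)$, whose stream function is affine with $u\equiv\bar u>0$, and the memory term $\rho B'(\psi)$ is $O(\e)$ in the relevant weighted norms. I would run a continuity/perturbation argument showing that for $\e$ small $\psi$ stays within an $O(\e)$ neighborhood of the affine profile in $C^{1,\alpha}_{\mathrm{loc}}$, uniformly in $L$; this forces $\psi_{x_2}=\rho u>0$ in $\O$ away from the corners and keeps $|\nabla\psi|$ strictly below the sonic threshold, so the truncation is inactive and \eqref{a4}--\eqref{a5} hold for the limit. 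With $u>0$ in hand, the equivalence with \eqref{a0}, the asymptotics \eqref{a7}--\eqref{a16} (using \eqref{a6} and the decay \eqref{a05}), and the uniqueness and criticality statements (3)--(4) follow as in Theorem \ref{th1}.

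The main obstacle is precisely this perturbation step under the weakened hypotheses. Without convexity there is no maximum-principle monotonicity to lean on, so one must prove that the $O(\e)$ smallness of the vorticity genuinely propagates to an $O(\e)$ deviation of $\nabla\psi$ both \emph{uniformly in the height} $L$ and \emph{up to the corners} $P_1,P_2$, where the geometry degenerates and $u=0$. Reconciling this boundary degeneracy with the strict interior positivity needed even to define $B(\psi)$, while keeping every constant independent of $L$, is the delicate core of the argument; once it is secured, all remaining conclusions are inherited from Theorem \ref{th1}.
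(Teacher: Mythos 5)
Your overall skeleton (stream function formulation, subsonic truncation, exhaustion by nozzles $\O_L$, uniform estimates, diagonal limit) matches the paper, but the step you build everything on is false. The wall is a fixed $O(1)$ bump: no smallness of $f$ is assumed, and already in the irrotational case $\e=0$ (so $u_0\equiv\bar u$) the solution satisfies $\psi-\bar\psi=-\rho_0\bar u f(x_1)$ on $\Gamma$, so the deviation from the affine profile near the bump is of order $\rho_0\max f$ in the stream function and of order one in the velocity, no matter how small $\e$ is. Hence no "continuity/perturbation argument showing that for $\e$ small $\psi$ stays within an $O(\e)$ neighborhood of the affine profile in $C^{1,\alpha}_{\mathrm{loc}}$ uniformly in $L$" can hold up to the wall, and neither the subsonic bound \eqref{a4} nor the positivity \eqref{a5} can be extracted this way. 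In the paper both are closed by taking $\rho_0$ large, not $\e$ small: the uniform estimates give $\|\psi_L-\bar\psi_L\|_{C^{1,\beta}(\overline{\O_L})}\leq\msC\rho_0$ (the boundary datum on $\Gamma$ is $-\rho_0\int_0^{f(x_1)}u_{0,L}(s)\,ds$, of size $\rho_0\bar U$, not $O(\e)$), while the sonic threshold grows like $\rho_0^{(\gamma+1)/2}$, so the Mach number is at most $\msC\rho_0^{(1-\gamma)/2}\leq 1/4$ once $\rho_0$ is large; the positivity of $u$ near the bump then requires the far-field decay of $\nabla(\psi-\bar\psi)$ plus a maximum-principle argument as in Subsection 4.2, not perturbation. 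What $\e$-smallness actually replaces is the convexity sign condition $FF''+(F')^2\geq 0$, and only in three places: the uniform $L^\infty$ bound, which the paper proves by comparison with the explicit barrier $C_2\,\e\rho_0(1+x_2)^{1-k}$ whose constant $C_2$ is independent of $L$ \emph{and} of the ellipticity ratio (this independence is what later makes the critical-density statement (4) work); the uniform $L^2$ gradient estimate; and uniqueness.

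The second gap is the energy estimate itself. The vorticity forcing decays only in $x_2$ and not at all in $x_1$, so it is not square-integrable on $\O_L$; the inference "forcing is $O(\e)$ and in $L^2$, hence $\|\nabla\phi\|_{L^2}\leq C$" is not available. After testing the equation with $\Psi_L=\psi_L-\bar\psi_L$, the dangerous term is the zeroth-order one $\iint d_2\Psi_L^2$, where $d_2$ has indefinite sign (no convexity) and $\Psi_L$ is of size $\rho_0$, on an unbounded domain. The paper's device is: $|d_2|\leq C\e\rho_0^{-1}(1+x_2)^{-(k+2)}$ by \eqref{a05}, then the weighted Poincar\'e inequality of Lemma \ref{lemA1} applied in the $x_2$-direction (whose boundary terms are finite precisely because $f$ is compactly supported), which bounds $\iint \e(1+x_2)^{-(k+2)}\Psi_L^2$ by $\frac{4C_3\e}{(k+1)^2}\iint|\nabla\Psi_L|^2$ plus controlled wall integrals; choosing $\e_0\leq (k+1)^2/(8C_3)$ absorbs this into the left-hand side. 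This is exactly where the $\e_0$ of the theorem is determined, and the same absorption rescues the uniqueness argument, cf. \eqref{7e012}. Since your proposal supplies neither this mechanism nor any substitute for it, the step you yourself flag as "the delicate core" is genuinely missing, and the argument as written does not close.
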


As a direct consequence of Theorem \ref{th2}, one gets the existence
and uniqueness of smooth subsonic ideal flows past a symmetric
obstacle.

\begin{corollary}\label{cor}(Subsonic flow past a symmetric obstacle)
Let  $\tilde\O$ be defined as
$$
\tilde \O=\left\{(x_1,x_2)\in\left.\R^2\right| |x_2| > f(x_1)\right\}.
$$
Suppose that the
horizontal velocity $u_0(x_2)$ in the upstream satisfies
\be\label{a06} u_0(x_2)\in C^2(\R),\ \ \ u_0(x_2)= u_0(-x_2),\ \ \
u_0(x_2)
>0, \ \ \
\lim_{x_2\rightarrow\pm\infty}u_0(x_2)=\bar u>0, \ee and
\eqref{a05}, then there exists a critical value $\rho_{cr}>0$, such
that if the incoming density $\rho_0$ in the upstream is larger than
$\rho_{cr}$, then there exists a uniformly subsonic symmetric
flow $(u,v,\rho)\in \left(C^{1,\a}(\tilde\O)\cap
C^\beta(\bar{\tilde\O})\right)^3$, which satisfies the Euler system
\eqref{a0}, the boundary conditions \eqref{a01}-\eqref{upMach}.
Moreover, the similar properties for $(\rho,u,v)$ as that in (1)-(4)
in Theorem \ref{th1} hold.
\end{corollary}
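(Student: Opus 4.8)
The plan is to reduce the symmetric obstacle problem to the wall problem of Theorem \ref{th2} by exploiting the reflection symmetry $(x_2,v)\mapsto(-x_2,-v)$, under which the Euler system \eqref{a0} is invariant. First I would observe that the upper half of $\tilde\O$, namely $\O=\{(x_1,x_2):x_2>f(x_1)\}$, is exactly the domain of the wall problem: its boundary consists of the physical body profile $\{x_2=f(x_1),\,x_1\in[0,1]\}$ together with the flat symmetry axis $\{x_2=0,\,x_1\in(-\infty,0]\cup[1,\infty)\}$, on which the outward normal is $(0,-1)$. Since the upstream datum $u_0$ is even by \eqref{a06} and satisfies the decay bound \eqref{a05}, Theorem \ref{th2} applies and yields a uniformly subsonic flow $(\rho,u,v)\in\left(C^{1,\a}(\O)\cap C^\beta(\bar\O)\right)^3$ enjoying properties (1)--(4). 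In particular, the no-penetration condition \eqref{a01} forces $v=0$ on the axis portion $\{x_2=0,\,x_1\notin[0,1]\}$.

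Next I would define the reflected extension to $\tilde\O$ by
\be
\tilde\rho(x_1,x_2)=\rho(x_1,|x_2|),\quad \tilde u(x_1,x_2)=u(x_1,|x_2|),\quad \tilde v(x_1,x_2)=\mathrm{sgn}(x_2)\,v(x_1,|x_2|),
\ee
which is continuous since $\tilde\rho,\tilde u$ are even and $\tilde v$ is odd with $v=0$ on the axis. The key point is that $(\tilde\rho,\tilde u,\tilde v)$ is a weak solution of \eqref{a0} across $\{x_2=0\}$: the vertical fluxes of the mass and horizontal-momentum equations, $\rho v$ and $\rho uv$, are odd and vanish on the axis, while the vertical flux of the vertical-momentum equation, $\rho v^2+p$, is even and hence continuous. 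Since \eqref{a0} is invariant under $(x_2,v)\mapsto(-x_2,-v)$, the extension solves the system distributionally throughout $\tilde\O$.

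The main obstacle, and the place that demands the most care, is upgrading this weak solution to a genuine $C^{1,\a}$ solution across the symmetry axis, i.e. ruling out any spurious singularity along $\{x_2=0\}$. Here I would invoke the interior regularity available for uniformly subsonic flows: because \eqref{a4} holds, the stream-function formulation is uniformly elliptic, so the weak solution is smooth in the interior of $\tilde\O$, and the even/odd parities give $\p_{x_2}\tilde\rho=\p_{x_2}\tilde u=0$ and $\tilde v=0$ on the axis, so that the gluing is indeed $C^{1,\a}$. Regularity up to the body profile and the Kutta--Joukowski condition at the corners $P_1,P_2$ are inherited directly from Theorem \ref{th2} in each half.

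Finally, the subsonic bound \eqref{a4}, the positivity \eqref{a5}, the integral estimate \eqref{a6}, and the far-field behavior \eqref{a7}--\eqref{a8}, \eqref{a16} hold on the upper half by Theorem \ref{th2} and extend to $\tilde\O$ through the reflection, with \eqref{a16} now holding as $x_2\to\pm\infty$. For uniqueness, any admissible symmetric flow has $v=0$ on the symmetry axis and therefore restricts to an admissible solution of the wall problem on $\O$; by the uniqueness in property (3) of Theorem \ref{th1} this restriction, hence the symmetric flow itself, is unique. This establishes properties (1)--(4) for the symmetric obstacle and completes the reduction.
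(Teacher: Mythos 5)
Your proposal is correct and coincides with the paper's own (implicit) argument: the paper proves the corollary simply by invoking Theorem \ref{th2} on the upper half-domain $\O=\{x_2>f(x_1)\}$ — which is exactly the wall problem, with the hypothesis $u_0'(0)\leq 0$ supplied automatically by the evenness in \eqref{a06} — and then extending by the even/odd reflection $(\rho,u,v)(x_1,x_2)\mapsto(\rho,u,-v)(x_1,-x_2)$. Your extra care in verifying that the reflected triple is a distributional solution across the axis and that uniform subsonicity plus parity (equivalently, vanishing of the vorticity $-Hu_0'(0)/\rho_0$ on the axis streamline) upgrades the gluing to $C^{1,\alpha}$ fills in precisely the details the paper leaves to the reader when it calls the corollary a ``direct consequence'' of Theorem \ref{th2}.
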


\begin{figure}[!h]
\includegraphics[width=130mm]{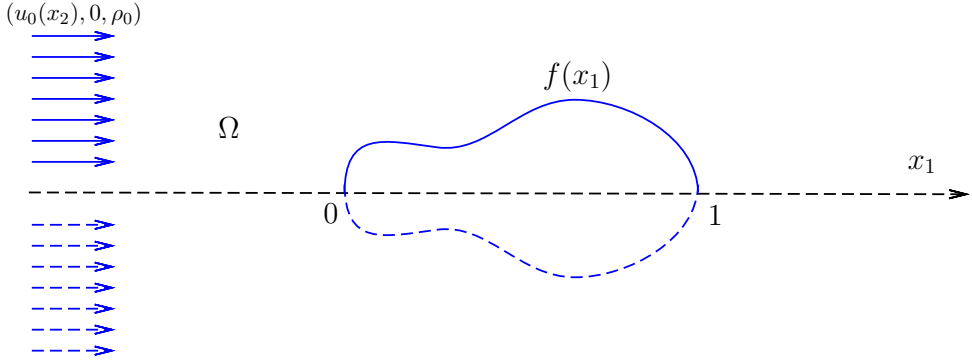}
\caption{Subsonic flows past a symmetric obstacle}\label{f2}
\end{figure}

As $\rho_0$ approaches to $\rho_{cr}$ in Theorems \ref{th1} and \ref{th2}
and Corollary \ref{cor}, combining with the compensated compactness
framework obtained in \cite{CHW}, we have the following theorem.
\begin{theorem}\label{thmlimit}
Assume that $\{(\rho_n,u_n,v_n)\}$ is a sequence of  subsonic Euler flows in $\Omega$ or $\tilde\Omega$ which satisfy \eqref{a01}-\eqref{upMach} with $\rho_0$ replaced by $\rho_0^{(n)}$. Suppose that $\rho_0^{(n)}\downarrow \rho_{cr}$ as $n\to \infty$, then there exists a limiting flow $(\rho, u, v)$ satisfying
\begin{equation}
(\rho_n,u_n,v_n)\rightarrow (\rho,u,v)\,\ \text{a.e. in}\,\ \O\,\ \text{as}\,\ n\rightarrow +\infty.
\end{equation}
Furthermore, $(\rho, u, v)$ satisfies the Euler system \eqref{a0} in the sense of distribution and the boundary condition \eqref{a01} in the sense of normal trace.
\end{theorem}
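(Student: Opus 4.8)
The plan is to realize $(\rho,u,v)$ as a subsonic-sonic limit through the compensated compactness framework of \cite{CHW}; accordingly, the heart of the argument is to verify that the sequence $\{(\rho_n,u_n,v_n)\}$ satisfies the uniform $L^\infty$ bounds and the $H^{-1}_{loc}$ compactness conditions required by that framework, with all constants independent of $n$. Once these are in place, \cite{CHW} yields a.e. convergence and identifies the limit as a weak solution.

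First I would extract uniform bounds. Because $\rho_0^{(n)}\downarrow\rho_{cr}$, the incoming densities satisfy $\rho_{cr}\le\rho_0^{(n)}\le\rho_0^{(1)}$, and the Bernoulli function of the $n$-th flow is fixed by the common upstream profile $u_0$ together with $\rho_0^{(n)}$. Bernoulli's law $\tfrac12(u_n^2+v_n^2)+\tfrac{\gamma}{\gamma-1}\rho_n^{\gamma-1}=B_n(\psi_n)$, with $\psi_n$ the stream function and $B_n$ bounded above and below uniformly in $n$, then forces
\[
0<\underline\rho\le\rho_n\le\bar\rho,\qquad u_n^2+v_n^2\le q_{cr}^2,
\]
with $\underline\rho,\bar\rho,q_{cr}$ independent of $n$. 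I would emphasize that these bounds come only from Bernoulli's law and the maximum principle and do \emph{not} degenerate as $\rho_0^{(n)}\to\rho_{cr}$, even though the flow may touch the sonic state in the limit.

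Next I would verify the compactness hypotheses. Conservation of mass gives $\Div(\rho_nu_n,\rho_nv_n)=0$, which is trivially precompact in $H^{-1}_{loc}(\Omega)$. For the curl I would use that the specific vorticity of a steady isentropic flow is materially transported: along streamlines $\omega_n/\rho_n$ is constant and equals its upstream value $-u_0'/\rho_0^{(n)}$, so
\[
\|\Curl(u_n,v_n)\|_{L^\infty}=\|\partial_{x_1}v_n-\partial_{x_2}u_n\|_{L^\infty}\le\frac{\bar\rho}{\rho_{cr}}\,\|u_0'\|_{L^\infty},
\]
uniformly in $n$. Since $L^2(K)$ embeds compactly in $H^{-1}(K)$ for every $K\Subset\Omega$, the vorticity $\Curl(u_n,v_n)$ is precompact in $H^{-1}_{loc}(\Omega)$; crucially, this bound rests on the transport structure of the vorticity rather than on any interior gradient estimate, which is essential because velocity gradients may blow up at the sonic limit. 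With the uniform subsonic bound together with the $H^{-1}_{loc}$ compactness of $\Div(\rho_nu_n,\rho_nv_n)$ and $\Curl(u_n,v_n)$, I would invoke \cite{CHW} to get $(u_n,v_n)\to(u,v)$ a.e. along a subsequence. To recover the full flow, note that the stream functions $\psi_n$ (normalized to vanish on $\Gamma$) are bounded in $W^{1,\infty}_{loc}$, hence converge uniformly on compacta to some $\psi$; combining this with $B_n\to B$ uniformly and $(u_n,v_n)\to(u,v)$ a.e., Bernoulli's law gives $\rho_n\to\rho$ a.e. Dominated convergence, justified by the uniform $L^\infty$ bounds, then lets me pass to the limit in the mass flux $(\rho_nu_n,\rho_nv_n)$ and in each momentum flux $\rho_nu_n^2+p_n$, $\rho_nu_nv_n$, $\rho_nv_n^2+p_n$ in $L^1_{loc}$, so $(\rho,u,v)$ solves \eqref{a0} distributionally. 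Finally, $(\rho_nu_n,\rho_nv_n)$ is a uniformly bounded divergence-free field converging strongly in $L^p_{loc}$; by the normal-trace theory for divergence-measure fields its normal trace on the $C^{1,\alpha}$ curve $\Gamma$ passes to the limit, yielding $(\rho u,\rho v)\cdot\vec n=0$ on $\Gamma$, i.e. \eqref{a01} in the sense of normal trace.

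I expect the main obstacle to be the reduction performed by \cite{CHW}: upgrading the weak convergence that the uniform bounds provide to a.e. convergence, notwithstanding the loss of uniform ellipticity as the flow becomes sonic. This degeneracy is exactly what the div-curl and Young-measure argument of \cite{CHW} is built to overcome, so the burden on our side is to supply its hypotheses uniformly in $n$; the delicate points are the uniform bounds above and the uniform vorticity estimate, both of which must be arranged so as to survive the passage $\rho_0^{(n)}\to\rho_{cr}$.
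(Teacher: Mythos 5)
Your proposal is correct and follows essentially the same route as the paper: the paper's proof also verifies the hypotheses of the compensated compactness framework of \cite{CHW} (Mach number at most one a.e., uniform upper and lower bounds on the Bernoulli functions, and uniformly bounded vorticities, the latter coming from the transport identity $\o_n/\rho_n=-u_0'(\kappa_n)/\rho_0^{(n)}$) and then invokes Theorem 2.2 of \cite{CHW} to obtain a.e.\ convergence of a subsequence, with the limit solving \eqref{a0} distributionally and \eqref{a01} in the sense of normal trace. Your write-up merely makes explicit the steps that the paper delegates to the citation (the $L^\infty$ bounds from Bernoulli's law, the $H^{-1}_{loc}$ compactness of the div--curl pair, the recovery of $\rho$, and the stability of the normal trace).
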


There are several remarks in order for the main results.

\begin{remark}
The most significant difference between the Euler flows with non-zero vorticity and irrotational flows is that the governing system for subsonic flows with non-zero vorticity is a hyperbolic-elliptic coupled system. In order to solve the hyperbolic mode, one has to prescribe suitable physical boundary conditions for the corresponding hyperbolic mode. The far field boundary condition \eqref{upMach} can be regarded as the boundary condition for the associated hyperbolic mode.
The general form for the boundary conditions in the upstream can be written as
\begin{equation}
(\rho(x_1, x_2), u(x_1, x_2), v(x_1, x_2))\to (\rho_0(x_2), u_0(x_2), v_0(x_2))
\quad \text{as}\quad x_1\to -\infty.
\end{equation}
Suppose that the flows satisfy the far field conditions with high order compatibility conditions, i.e.,
\begin{equation}
\nabla (\rho(x_1, x_2), u(x_1, x_2), v(x_1, x_2))\to \nabla (\rho_0(x_2), u_0(x_2), v_0(x_2))
\quad \text{as}\quad x_1\to -\infty.
\end{equation}
It follows from the Euler system \eqref{a0} that  one has
\begin{equation}\label{eqBC1}
(\rho_0 v_0)_{x_2}=0,\quad (\rho_0 u_0 v_0)_{x_2}=0,\quad (\rho_0 v_0^2+p(\rho_0))_{x_2}=0.
\end{equation}
The first equation in \eqref{eqBC1} shows that
\begin{equation}
\rho_0 v_0\equiv C_1
\end{equation}
for some constant $C_1$.
The slip boundary condition \eqref{a01} yields that $\rho_0(0) v_0(0)=0$. Hence $C_1=0$. Since we are looking for steady subsonic solutions, $\rho_0\neq 0$. Thus $v_0\equiv 0$. It follows from the third equation in \eqref{eqBC1} that $(p(\rho_0))_{x_2}=0$. This yields that $\rho_0$ must be a constant.
Therefore, the boundary conditions \eqref{a02} and \eqref{upMach} are indeed the  general form for the boundary conditions for subsonic flows past a wall or symmetric body if the solutions satisfy have high order consistency with the boundary conditions.
\end{remark}

\begin{remark}  For subsonic flows with non-zero vorticity, the boundary condition \eqref{upMach} is used to solve the hyperbolic mode in the steady Euler system. When the far field velocity in \eqref{a02} is a constant field, the flows obtained in this paper are subsonic or subsonic-sonic irrotational flows. In fact,  the boundary condition \eqref{upMach} together with \eqref{a02} can be used to determine Bernoulli constant for  subsonic irrotational flows past a body when the far field velocity  in \eqref{a02} is a constant.  In this case, the boundary conditions \eqref{a01}-\eqref{upMach} and
Kutta-Joukowski condition at the corner points $\{P_1, P_2\}$ reduces to the boundary conditions for subsonic irrotational flows
past a body \cite{Bers1}.  Therefore, the boundary conditions we
prescribed for subsonic Euler flows past a wall  not only are suitable physical boundary conditions for the well-posedness theory for subsonic flows with non-zero vorticity past a wall, but also can be regarded as generalizations for the boundary conditions for irrotational flows past a wall,
\end{remark}

\begin{remark}
When the streamlines of the flows have simple topological structure, we  use the stream function formulation to solve the hyperbolic mode in the Euler system so that the hyperbolic-elliptic coupled Euler system is reduced to a single second order quasilinear equation with memory. Therefore, our first key task is to show that the flows past a wall indeed have simple topological structure. Second, even for the problem for the stream function, the memory term makes the equation more complicated than the one for irrotational flows. The governing equation for irrotational flows is a homogenous second order quasilinear equation for which the maximum principle can be applied easily \cite{Finn572d}. Furthermore, the far field of the irrotational flows past a body  is a uniform constant state so that the far fields can be essentially regarded as a single point and the problem can be transformed into a bounded domain problem
\cite{Bers1,Finn572d, Finn573d, Dong}. The memory term for the flows with non-zero vorticity yields non-uniform far field states. Therefore, the far fields for the flows with non-zero vorticity cannot be regarded as a single point so that the problem for subsonic flows with non-zero vorticity past a wall is indeed a problem on a unbounded domain.
\end{remark}

\begin{remark}
We use a sequence of nozzles to approximate the region above the wall so that the ideas in \cite{XX3,DX,DXX} can be used to get approximated solutions. However, the estimates developed in \cite{XX3,DX,DXX} depend on the height of the nozzles. In order to get the existence for flows past a wall,  we have to establish
some uniform estimates for the flows in the approximated nozzles independent of nozzle height. This is also the crucial step to obtain the existence of subsonic Euler flows past a wall.
\end{remark}

\begin{remark}
The uniqueness of solution
in Theorems \ref{th1} and \ref{th2} and Corollary \ref{cor} is
obtained in the class of subsonic Euler flows in which we proved the
existence, i.e. the class of flows which also have positive
horizontal velocity and satisfy the asymptotic behavior
\eqref{a6}-\eqref{a16}. In the future, we hope to prove the
uniqueness of subsonic Euler flows which satisfy only the  the
boundary conditions \eqref{a01}-\eqref{upMach} and Kutta-Joukowski
condition as in the case of irrotational flows in \cite{Finn572d}.
\end{remark}

\begin{remark}
The flows obtained in Theorem \ref{th1} may have large vorticity so
that they may not be close to potential flows, which cannot be
treated as small perturbations of potential flows and the effect of
vorticity in the whole domain has to be analyzed.
\end{remark}

\begin{remark}
Since the flows have stagnation points at those corner points, it is not easy to claim the absence of stagnation point inside the domain. In order to solve the hyperbolic mode in the Euler system, we need to show the absence of stagnation points inside the flow region and get precise far field behavior of the flows so that
the hyperbolic-elliptic coupled Euler system can be reduced to  a second order
quasilinear equation with memory term for the stream function.
Although the flows obtained in Theorem \ref{th2} have small
vorticity, the memory term appeared in the equation for the stream function makes it hard to prove far field behavior of the flows and the absence of stagnation point above the wall.
\end{remark}

\begin{remark}
As mentioned
in \cite[Remark 6.1]{CJ}, the absence of stagnation point in the flow region was not obtained in \cite{CJ} so that the equivalence between the stream function formulation and the Euler system, in particular, the existence and uniqueness of the solutions of the original Euler system are not established in \cite{CJ};
we prove the existence and uniqueness of subsonic flows for the original Euler system when the incoming density is greater than a critical value as long as the horizontal velocity is positive convex or a small perturbation of a constant state, and we also show that the flows do not have stagnation point above the wall.  Even for the problem for the stream function (cf. the problem \eqref{b01}), the existence and uniqueness of the solutions were obtained in \cite{CJ} only  when the incoming velocity is a sufficiently smaller perturbation of a small constant state and the incoming density is a large constant; while we can deal with the flows with large vorticity and the density which is greater than a critical value.   Finally, the crucial estimates in \cite{CJ} are for elliptic equations which are small perturbations of two dimensional Laplace equation so that the author in \cite{CJ} can only deal with the flows with small velocity and vorticity; in order to deal with subsonic flows whose Mach number might approach to one, we have to rebuild the estimate for general quasilinear equation for the stream functions.
\end{remark}

\begin{remark}
Note that we only obtained the weak convergence in Theorem \ref{thmlimit}. We hope to study the regularity and fine structure of these limiting solutions later on. Continuous transonic irrotational flows in nozzles were constructed in \cite{WX1,WX2,WX3} recently. Furthermore, there are a lot of studies on stability of transonic shock solutions in nozzles recently, see \cite{Chen,CSX,LXY,LXY1, XY1, XY} and references therein. For the problems on the stability of transonic shocks, different boundary conditions for the flows at the downstream are prescribed, where the key issue is to study the free boundary problem for subsonic flows which  are usually small perturbations of some given background solutions.
\end{remark}

\begin{remark}
Theorems \ref{th1} and \ref{th2} can be extended easily to subsonic
Euler flows past several smooth bumps (See Figure \ref{f8}).
\end{remark}

\begin{figure}[!h]
\includegraphics[width=130mm]{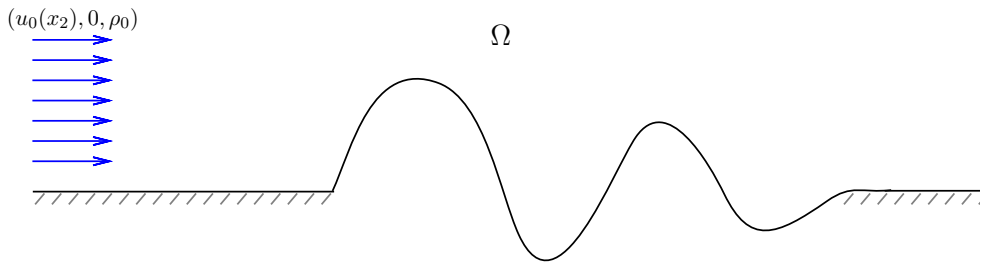}
\caption{Subsonic flows past several bumps}\label{f8}
\end{figure}

\begin{remark}
Corollary \ref{cor} also holds for the subsonic flows past a
symmetric body with a cusp (See Figure \ref{f3}). The only
difference is the horizontal velocity is not necessary zero at the
trailing edge.
\end{remark}

\begin{figure}[!h]
\includegraphics[height=4.5cm, width=130mm]{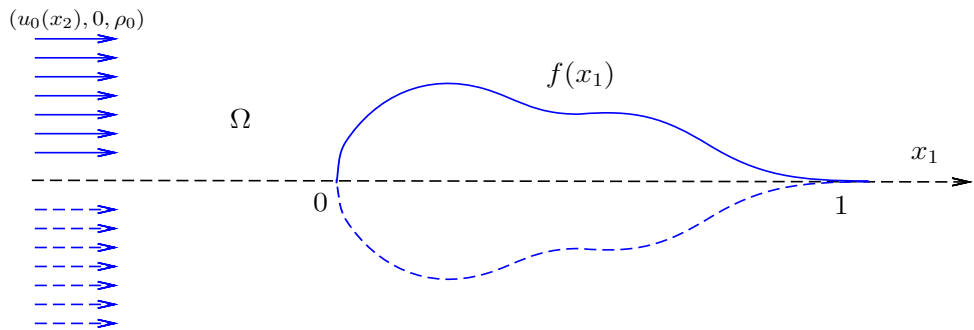}
\caption{Subsonic flows past a symmetric obstacle with
cusp}\label{f3}
\end{figure}

\begin{remark}
All results in this paper also hold for the non-isentropic Euler
equations, provided that we impose the entropy of the flows in the
upstream.
\end{remark}

Before giving the detailed proof for these main results, we present
the main ideas of the proof as follows and the recent progress on
some problems related to flows past a body. Adapting the stream
function formulation developed in \cite{XX3}, we reduce the Euler
system into a single second order elliptic equation with memory for
the stream function. In order to deal with the associated problem
for the stream function in the region above the wall, we approximate
the domain by a sequence of infinitely long nozzles $\Omega_L$
bounded by $\Gamma$ and $\Gamma_L=\{(x_1, x_2)|x_2=L\}$. Combining
the ideas and techniques in \cite{XX3, DX, DXX} gives the existence
of subsonic Euler flows $(\rho_L, u_L, v_L)$ in $\Omega_L$. One of
the key issues in this paper is to obtain some uniform estimates for
the solutions $(\rho_L, u_L, v_L)$ independent of $L$. The uniform
estimates consist of mainly two parts. The first one concerns the
$L^\infty$ estimate for the difference of the stream function and
the corresponding one in the upstream, which is based on a
maximum principle. The second one is the uniform $L^2$
gradient estimate for the same quantity, which follows from a delicate
energy estimate and is used to prove the asymptotic behavior and the
uniqueness of the flows.

The key part of the proof is on the uniform estimate  for the steady
Euler flows in infinitely long nozzles. In fact, the steady Euler
flows in nozzles were studied extensively recently. In his famous
survey \cite{Bers}, Bers asserted that there exists a unique
subsonic irrotational flow in a two-dimensional infinitely long
nozzle, provided the incoming mass flux is sufficiently small. The
rigorous mathematical proof to this assertion was obtained in
\cite{XX1} for flows in 2D nozzles, in \cite{XX2} for flows in
axially symmetric nozzles, and \cite{DXY} for flows in arbitrary
multi-dimensional nozzles, respectively. Furthermore, the existence
of subsonic irrotational flows was established in \cite{DXY, XX1,
XX2} as long as the mass flux is less than a critical value. When
the mass flux approaches the critical value, the associated flows
converge weakly to subsonic-sonic flows, see \cite{HWW, XX1, XX2}.
As we mentioned before, subsonic flows with non-zero vorticity were
first studied in \cite{XX3} via a stream function formulation, which
was generalized in \cite{CX1, DD, DD1, CDX, DL, DX, DXX} and
reference therein. Subsonic Euler flows in finite nozzles, in
particular, three dimensional nozzles were investigated recently in
\cite{CX2,DWX,Weng,W2}.

The rest of this paper is organized as follows. We first adapt the
stream function formulation for the steady Euler flows developed in
\cite{XX3} to reduce the steady Euler system into a single second
order equation in Section \ref{secform}. To obtain the existence of
the subsonic solution to the equation for stream function above a
wall, we construct a series of uniformly subsonic solutions in
infinitely long nozzles as the approximated solutions. Some
uniform estimates for the approximated solutions are obtained in Section
\ref{secexistence} so that the existence of subsonic solution above a wall is established when the density of the incoming
flows is sufficiently large. In order to show that the solution
induced by the stream function solves the compressible Euler system,
in Section \ref{secfine} we prove the asymptotic behavior of the
flows in the far fields and show that the horizontal velocity of the
flows is always positive except at the corners on the boundary. The
uniqueness of a subsonic flow past a wall is given  in Section
\ref{secunique} by the energy method with the aid of $L^2$ integral
estimates. In Section \ref{seccritical},  the existence of
the critical value of the density in the upstream is established so that
the proof of Theorem  \ref{th1} is completed. The Section
\ref{secgeneral} sketches the proof for the existence and the
uniqueness of the compressible subsonic Euler flow past a wall when
the upstream horizontal velocity is a general small perturbation of
a constant, which gives the proof of Theorem  \ref{th2}.
Theorem \ref{thmlimit} is proved in Section \ref{seclimit} with the help of the compensated
compactness framework established in \cite{CHW}. A weighted Poincar\'{e} inequality used in Section \ref{secgeneral}  and its proof are presented in Appendix \ref{secappend}.

\section{Reformulation of the Problem}\label{secform}
It follows from the Euler system \eqref{a0} that one has
\be\label{b39}
(u,v)\cdot\g \left(\f\o\rho\right)=0
\ee
and
\be\label{b44}
(u,v)\cdot\g B=0,
\ee
where
\[
\o= \p_{x_1}v-\p_{x_2}u\quad \text{and}\quad B=\frac{u^2+v^2}{2} + \frac{\gamma
\rho^{\gamma-1}}{\gamma-1}
\]
are the vorticity and the Bernoulli function of the flows,
respectively. The equation \eqref{b44} is nothing but the
Bernoulli's law. Later on, we denote $h(\rho)=\frac{\gamma
\rho^{\gamma-1}}{\gamma-1}$ which is called the enthalpy of the polytropic
gas.

\begin{prop}\label{prop21}
Suppose that
\begin{equation}\label{eq21}
\rho>0\,\,\text{and}\,\, u>0\,\, \text{in}\,\,\Omega,\,\, \text{and}\,\, u\geq\delta\,\, \text{for}\,\,|x|\geq R
\end{equation}
for some positive constants $\delta$ and $R$. Furthermore, assume that
\begin{equation}\label{eq22}
 u,\ \rho,\ \text{and}\ v_{x_2}\ \text{are}\ \text{bounded}, \ \
\text{while}\ v,\ v_{x_1},\ \text{and}\ \rho_{x_2}\rightarrow 0,\ \
\text{as}\ x_1\rightarrow\pm\infty.
\end{equation}
For a smooth flow in $\Omega$ satisfying \eqref{eq21} and \eqref{eq22}, the Euler system
\eqref{a1} is equivalent to the continuity equation, \eqref{b39} and
\eqref{b44}.
\end{prop}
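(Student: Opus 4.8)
The plan is to prove the nontrivial direction---that the continuity equation together with \eqref{b39} and \eqref{b44} implies the two momentum equations in \eqref{a0}---since the forward implication is immediate: continuity is literally the first equation of \eqref{a0}, while \eqref{b39} and \eqref{b44} are obtained from the momentum equations by eliminating the pressure and taking a suitable combination, as already indicated before the statement. Throughout I would use that for the polytropic law $p=\rho^\gamma$ one has $\nabla p=\rho\,\nabla h$ with $h=\frac{\gamma\rho^{\gamma-1}}{\gamma-1}$, so that, after subtracting $u$ (resp.\ $v$) times the continuity equation, the two momentum equations are equivalent (using $\rho>0$) to $A:=(u,v)\cdot\nabla u+\partial_{x_1}h=0$ and $C:=(u,v)\cdot\nabla v+\partial_{x_2}h=0$. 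Thus the whole statement reduces to showing $A\equiv C\equiv 0$.

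The key algebraic step is the pair of Crocco-type identities $\partial_{x_1}B=A+v\omega$ and $\partial_{x_2}B=C-u\omega$, which follow by direct differentiation of $B=\frac{u^2+v^2}{2}+h$ together with $\omega=v_{x_1}-u_{x_2}$. From these I would read off two facts. First, $(u,v)\cdot\nabla B=uA+vC$, so \eqref{b44} is equivalent to the algebraic relation $uA+vC=0$, i.e.\ the vector $(A,C)$ is pointwise orthogonal to $(u,v)$. Second, taking the curl, $\partial_{x_1}C-\partial_{x_2}A=(u,v)\cdot\nabla\omega+\omega\,\text{div}(u,v)$, and replacing $\text{div}(u,v)=-\frac{1}{\rho}(u,v)\cdot\nabla\rho$ via continuity gives $\partial_{x_1}C-\partial_{x_2}A=\rho\,(u,v)\cdot\nabla(\omega/\rho)$; hence \eqref{b39} is equivalent to $\partial_{x_1}C-\partial_{x_2}A=0$. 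This is exactly where the continuity equation enters, and it also re-derives the forward direction, since $A=C=0$ trivially yields both conditions.

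It remains to combine $uA+vC=0$ and $\partial_{x_1}C-\partial_{x_2}A=0$ to conclude $A\equiv C\equiv0$. Since $u>0$ in $\Omega$ by \eqref{eq21}, orthogonality lets me write $(A,C)=\lambda(-v,u)$ with $\lambda=C/u$. Substituting into the curl relation gives $\text{div}(\lambda(u,v))=0$, which together with the continuity equation $\text{div}(\rho(u,v))=0$ yields $(u,v)\cdot\nabla(\lambda/\rho)=0$; that is, $\lambda/\rho$ is constant along every streamline. I would then use the far-field hypotheses \eqref{eq22} to evaluate this constant: as $x_1\to\pm\infty$ the terms $u\,v_{x_1}$, $v\,v_{x_2}$ and $\partial_{x_2}h=\gamma\rho^{\gamma-2}\rho_{x_2}$ making up $C$ all tend to $0$, because $u,\rho,v_{x_2}$ are bounded while $v,v_{x_1},\rho_{x_2}\to0$, so $C\to0$ and hence $\lambda=C/u\to0$ (using $u\geq\delta$ for $|x|\geq R$) and $\lambda/\rho\to0$. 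Because $u>0$ everywhere, each streamline extends to $x_1\to\pm\infty$, so the constant value of $\lambda/\rho$ on it must be $0$, giving $\lambda\equiv0$ and therefore $A\equiv C\equiv0$, which are the two momentum equations.

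The main obstacle I expect is this last streamline argument rather than the routine Crocco algebra: I must ensure that every streamline through an interior point genuinely reaches the far field $x_1\to\pm\infty$---so that the transported quantity can be evaluated there---which relies on $u>0$ in $\Omega$ and the uniform lower bound $u\geq\delta$ for $|x|\geq R$ to prevent the trajectory from stalling or escaping to $x_2=+\infty$ at finite $x_1$, and on the decay in \eqref{eq22} to force $\lambda/\rho\to0$ along it. Controlling $\partial_{x_2}h$ also tacitly uses that $\rho$ stays bounded away from $0$, which I would justify from $\rho>0$, the boundedness of $\rho$, and the convergence of $\rho$ to its positive far-field value along each streamline.
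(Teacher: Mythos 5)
Your proposal is correct, and its skeleton matches the paper's proof: both reduce the problem to showing that the momentum residual $(A,C)=\bigl(uu_{x_1}+vu_{x_2}+p_{x_1}/\rho,\; uv_{x_1}+vv_{x_2}+p_{x_2}/\rho\bigr)$ vanishes, both extract from \eqref{b44} the pointwise orthogonality $uA+vC=0$ and from \eqref{b39} together with continuity the curl-free relation $\partial_{x_1}C-\partial_{x_2}A=0$, and both finish by transporting a scalar along streamlines traced back to $x_1=-\infty$ (no stagnation since $u>0$, no contact with the wall, no blow-up in $x_2$ at finite $x_1$ since $|v/u|$ is controlled). The difference is the final device. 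The paper, following \cite{XX3}, integrates the curl-free field to a potential $\Phi$ with $\nabla\Phi=(A,C)$, observes $(u,v)\cdot\nabla\Phi=(u,v)\cdot\nabla B=0$, and concludes $\Phi$ is globally constant because it is constant along streamlines and constant in the upstream limit, whence $(A,C)=\nabla\Phi\equiv 0$; you avoid the potential altogether, factoring $(A,C)=\lambda(-v,u)$ with $\lambda=C/u$ (legitimate since $u>0$), turning the curl-free relation into $\mathrm{div}\bigl(\lambda(u,v)\bigr)=0$, and using continuity once more to obtain the transport equation $(u,v)\cdot\nabla(\lambda/\rho)=0$, which the far-field decay then forces to have the constant value $0$ on every streamline. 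Each route buys something: yours needs no global potential, hence no appeal to the simple connectedness of the domain, and its far-field evaluation is pointwise and immediate ($\lambda=C/u\to 0$ using $u\geq\delta$), whereas the paper's assertion that ``$\Phi$ is a constant as $x_1\to-\infty$'' requires integrating the decay of $\nabla\Phi$; conversely, the paper's transported scalar $\Phi$ is constant along streamlines by Bernoulli alone, without invoking continuity a second time. One caveat is shared by both arguments, and you are the one who flags it: concluding $p_{x_2}/\rho=\gamma\rho^{\gamma-2}\rho_{x_2}\to 0$ from \eqref{eq22} tacitly requires, when $1<\gamma<2$, that $\rho$ stay bounded away from zero near $x_1=\pm\infty$ (and likewise for passing from $\lambda\to 0$ to $\lambda/\rho\to 0$); this needs the small extra justification you indicate, which the paper passes over silently.
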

\begin{proof}
A similar result for flows in infinitely long nozzles has been proved in
\cite[Proposition 1]{XX3}. We sketch the proof for the flows past a wall as follows.

Note that the equations \eqref{b39} and \eqref{b44} can be
concluded from the Euler system \eqref{a0}. Hence, it suffices to
prove the validity of the Euler system \eqref{a0} from the
continuity equation and \eqref{b39}-\eqref{b44}. It follows from the continuity equation and \eqref{b39} that
\[
\partial_{x_2}(uu_{x_1}+vu_{x_2}+p_{x_1}/\rho)-\partial_{x_1}(uv_{x_1}+vv_{x_2}+p_{x_2}/\rho)=0.
\]
Therefore,
there exists a function $\Phi$ such that
\begin{equation}\label{Phi}
\partial_{x_1}\Phi=uu_{x_1}+vu_{x_2}+p_{x_1}/\rho\,\ \text{and}\,\ \partial_{x_2}\Phi=uv_{x_1}+vv_{x_2}+p_{x_2}/\rho.
\end{equation}
Furthermore, the straightforward computations show that
\begin{equation}
(u,v)\cdot\nabla\Phi=(u,v)\cdot\nabla B.
\end{equation}
Thus $(u, v)\cdot \nabla \Phi=0$.
Since the horizontal velocity is positive above the wall, each streamline defined by the equation
\begin{equation}\label{strl}
\left\{
\begin{array}{lll}
\frac{dx_1}{ds}=u(x_1(s),x_2(s)),\\
\frac{dx_2}{ds}=v(x_1(s),x_2(s))
\end{array}
\right.
\end{equation}
can be stretched to a unique position in the upstream. To see
this, we need only to claim that any streamline through a point
in $\Omega$ cannot touch the wall or go to the infinity in the
$x_2$ direction. In view of the argument in \cite[Proposition
1]{XX3}, such a streamline is away from the wall. Thus, we need only to show that the streamline cannot go to infinity in the $x_2$-direction with finite $x_1$. In fact, the
assumption \eqref{eq21} that $u$ is uniformly positive for large $x_2$ implies
that $\left|\f{v}{u}\right|$ is uniformly bounded. So, the solution
of the ODE
\[
\frac{dx_2}{dx_1} =\frac{v}{u}(x_1, x_2)
\]
cannot blow up at finite $x_1$.  It follows from \eqref{eq22} that $\Phi$ is a constant as $x_1\to -\infty$. Since $\Phi$ is a constant along each streamline, $\Phi$ is a constant in $\Omega$. Using \eqref{Phi} yields
\[
uu_{x_1}+vu_{x_2}+p_{x_1}/\rho=0\,\ \text{and}\,\ uv_{x_1}+vv_{x_2}+p_{x_2}/\rho=0.
\]
 These, together with the continuity equation give the Euler system \eqref{a1}.
\end{proof}

It follows from the continuity equation that there exists a stream function $\psi$ satisfying
\begin{equation}
\psi_{x_2}=\rho u\quad \text{and}\quad \psi_{x_1}=-\rho v.
\end{equation}
This, together with the slip boundary condition \eqref{a01}, shows that $\psi$ is a
constant on the boundary $\Gamma$. Without loss of
generality, we assume that $\psi=0$ on the boundary
$\Gamma$.

If the flow satisfies \eqref{eq21}, then the streamlines possess a simple topological
structure in the whole domain, so that we can parameterize the
streamlines in the domain by their positions in the upstream.
By the definition of the stream function, we have the following
parametrization for the stream function in the upstream (See Figure
\ref{f14})
\begin{equation}\label{B1}
\psi=\int_{0}^{\kappa(\psi;\rho_0)}\rho_0u_0(s)ds.
\end{equation}
Denote
\begin{equation}\label{Fdef}
F(\psi;\rho_0)=u_0(\kappa(\psi;\rho_0)). \end{equation}
The Bernoulli's law yields \be\label{b02}
\f{|\nabla\psi|^2}{2\rho^2}+h(\rho)
=h(\rho_0)+\f{1}{2}F^2(\psi;\rho_0) = \mathcal{B}(\psi;\rho_0). \ee
\begin{figure}[!h]
\includegraphics[width=130mm]{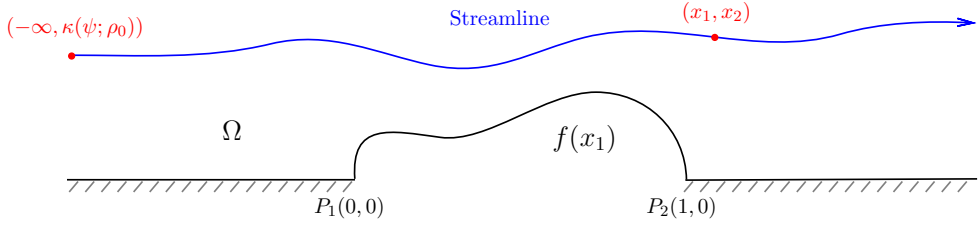}
\caption{Parameterize the streamlines}\label{f14}
\end{figure}

For given $\mathfrak{s}$, there exist unique $\underline{\varrho}(\mathfrak{s})$
and $\bar\varrho(\mathfrak{s})$ such that
\be\label{b03} \f12
\gamma \underline\varrho^{\gamma-1}(\mathfrak{s})+h(\underline\varrho(\mathfrak{s}))=\mathfrak{s}\quad \text{and}\quad h(\bar\varrho(\mathfrak{s}))=\mathfrak{s}. \ee
Define
\begin{equation}\label{defSigma}
\Sigma(\mathfrak{s})=\sqrt{\gamma}\underline\varrho^{\frac{\gamma+1}{2}}(\mathfrak{s})
\end{equation}
which is the critical momentum associated with the sonic state. It is easy to check
from
\begin{equation}\label{eq2Ber}
\frac{\mathfrak{m}^2}{2\rho^2}+h(\rho) =\mathfrak{s}
\end{equation}
that for given $\mathfrak{s}$, the
momentum $\mathfrak{m}$ is a strictly decreasing function of $\rho$ for $\rho \in (\underline\varrho(\mathfrak{s}), \bar\varrho(\mathfrak{s}))$. Therefore, for given $\mathfrak{s}$ and $\mathfrak{m}$, one can find a unique $\rho\in (\underline\varrho(\mathfrak{s}), \bar\varrho(\mathfrak{s}))$ satisfying \eqref{eq2Ber} and denote it by
\begin{equation}
\rho=\mclH\left(\mathfrak{m}^2,\mathfrak{s}\right).
\end{equation}
Later on, we also write
\[
\rho= \mathcal{H}(|\nabla \psi|^2, \B(\psi;\rho_0)) =H\left(|\nabla\psi|^2,\psi; \rho_0\right)
 \]
to emphasize the dependence on the parameter $\rho_0$.
\begin{figure}[!h]
\includegraphics[width=70mm]{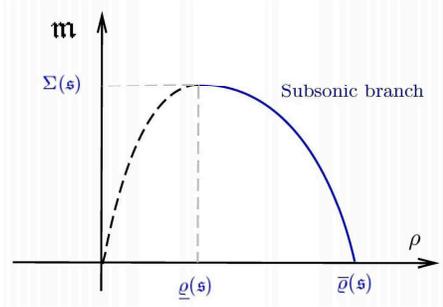}
\caption{The relationship between the momentum and the
density}\label{f5}
\end{figure}

It follows from the vorticity equation \eqref{b39} that the scaled
vorticity $\frac{\omega}{\rho}$ is conserved along each streamline.
Thus, it holds that
$$\o =
- \f{H\left(|\nabla\psi|^2,\psi;\rho_0\right)u_0'(\kappa(\psi;\rho_0))}{\rho_0}.$$
This, together with the identity
$\o=-\Div\left(\f{\g\psi}{\rho}\right)$, gives the equation for the
stream function $\psi$ as follows
\be\label{a10}
\text{div}\left(\f{\nabla\psi}{H\left(|\nabla\psi|^2,\psi;\rho_0\right)}\right)=u_0'(\kappa(\psi;\rho_0))H(|\nabla\psi|^2,\psi;\rho_0)/\rho_0.
\ee
The straightforward computations for \eqref{B1} and \eqref{Fdef} yield
\[
u_0'(\kappa(\psi;\rho_0))=\rho_0F(\psi;\rho_0)F'(\psi;\rho_0),
\]
where $F'$ is the derivative with respect to $\psi$ of $F$.
 Therefore, the problem for the Euler system \eqref{a0} with boundary condition \eqref{a01} is equivalent to
\be\label{b01}\left\{\ba{l}
\text{div}\left(\f{\nabla\psi}{H(|\nabla\psi|^2,\psi;\rho_0)}\right)=F(\psi;\rho_0)F'(\psi;\rho_0) H(|\nabla\psi|^2,\psi;\rho_0)\quad\text{in}\quad\O,\\
\psi=0\ \ \ \text{on}\ \ \ \p\O.\ea\right. \ee
Note that the equation in \eqref{b01} can be written as \be\label{a11}
\left(\left(1-\frac{|\nabla\psi|^2}{\gamma H^{\gamma+1}}\right)\delta_{ij} +\frac{\partial_i\psi\partial_j\psi}{\gamma H^{\gamma+1}}
\right)\partial_{ij}\psi ={F(\psi;\rho_0)F'(\psi;\rho_0){H}^2}.
\ee
It is easy to see that the coefficient matrix of the equation \eqref{a11} has two eigenvalues $\lambda=1-\frac{|\nabla\psi|^2}{\gamma H^{\gamma+1}}$ and $\Lambda=1$. As $|\nabla \psi|^2\to \gamma H^{\gamma+1}$, i.e., the flows go to sonic, the equation \eqref{a11} becomes degenerate
elliptic. This is one of the major
difficulty for solving the problem \eqref{b01}. Another main
difficulty here is that the domain $\Omega$ is unbounded in every
direction.

\section{Existence of subsonic solution  with  large incoming density}\label{secexistence}
In this section, we  establish the existence of subsonic
solutions for the boundary value problem \eqref{b01} when the incoming density $\rho_0$ is sufficiently large.  Since the
upstream is uniformly subsonic, the uniform density $\rho_0$ in the upstream indeed
has a lower-bound $\rho_0^*=\left(\frac{1}{\gamma}\sup
u_0^2(x_2)\right)^{\f1\gamma}$. In Sections 3-6, we assume that the hypotheses in Theorem \ref{th1} hold. First, we have the following proposition.
\begin{prop}\label{prop31}
There exists a $\bar\rho_0\in (\rho_0^*, \infty)$ such that if
$\rho_0>\bar \rho_0$,  the problem \eqref{b01} has a subsonic
solution satisfying
\begin{equation}
0\leq \psi \leq \bar \psi,\,\, \bar \psi-\psi \leq C\rho_0,\,\,\text{and}\,\, \sup_{x\in\Omega} \frac{|\nabla \psi|}{\gamma H^{\frac{\gamma+1}{2}}(|\nabla\psi, \psi;\rho_0)}\leq \frac{1}{4},
\end{equation}
where
\begin{equation}\label{defbarpsi}
\bar\psi=\rho_0\int_0^{x_2}u_0(s)ds
\end{equation}
and the constant $C$ depends only on $\max f(x_1)$ and $\max
u_0(x_2)$.
\end{prop}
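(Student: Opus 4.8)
The plan is to treat the degenerate boundary value problem \eqref{b01} by a three-fold procedure: first regularize the equation into a uniformly elliptic one via a subsonic truncation, then solve the truncated problem on the family of approximating nozzles $\ol$ (bounded below by $\Gamma$ and above by $\Gamma_L=\{x_2=L\}$), and finally remove both the truncation and the approximation using estimates that are uniform in $L$. Concretely, I would fix a smooth cutoff that freezes the density function $H$ once the ellipticity quantity $|\g\psi|^2/(\gamma H^{\gamma+1})$ appearing in \eqref{a11} (which equals the squared Mach number) exceeds, say, $1/2$, so that the resulting operator is uniformly elliptic with constants independent of $L$ and $\rho_0$. In each $\ol$ I would impose $\psi=0$ on $\Gamma$ and $\psi=\bar\psi$ on $\Gamma_L$; since the truncated equation is exactly of the type treated in \cite{XX3,DX,DXX} (a quasilinear equation carrying the memory term $F(\psi;\rho_0)F'(\psi;\rho_0)H$), their framework produces a solution $\psi_L$. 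The remaining task, and the real content, is to prove the three stated bounds uniformly in $L$ and to show the truncation is inactive once $\rho_0$ is large, so that $\psi_L$ solves the genuine problem \eqref{b01}.

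For the $L^\infty$ bounds I would use that $\bar\psi=\rho_0\int_0^{x_2}u_0(s)\,ds$ is an exact solution of the PDE in \eqref{b01} on the whole strip $\{x_2>0\}$: since $\kappa(\bar\psi;\rho_0)=x_2$, $F(\bar\psi;\rho_0)=u_0(x_2)$ and $H(|\g\bar\psi|^2,\bar\psi;\rho_0)=\rho_0$, both sides of the equation reduce to $u_0'(x_2)$. Hence $w:=\bar\psi-\psi_L$ is, after subtracting the two equations, governed by a linear uniformly elliptic operator; the convexity $u_0''\ge0$ — equivalently the monotonicity in $\psi$ of $F(\psi;\rho_0)F'(\psi;\rho_0)=u_0'(\kappa(\psi;\rho_0))/\rho_0$ coming from \eqref{B1} — is what gives this operator the sign needed for the maximum principle. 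On $\p\ol$ one has $w=0$ on $\Gamma_L$ and at the ends, while on $\Gamma$ (where $\psi_L=0$) $w=\rho_0\int_0^{f(x_1)}u_0\in[0,\rho_0\,\max f\,\max u_0]$, so the maximum principle yields $0\le w\le C\rho_0$ with $C=\max f\cdot\max u_0$, uniformly in $L$; this is precisely $\psi_L\le\bar\psi$ and $\bar\psi-\psi_L\le C\rho_0$. The lower bound $\psi_L\ge0$ follows by comparison, noting that $0$ is a subsolution because $u_0'(0)\le0$.

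The crux is the uniform subsonic bound. From the Bernoulli relation \eqref{b02} and the truncation $|\g\psi_L|^2/(\gamma H^{\gamma+1})\le 1/2$, writing $q=|\g\psi_L|/\rho$ for the speed, one has $\tfrac12 q^2+h(\rho)=h(\rho_0)+\tfrac12F^2$ together with $q^2\le\tfrac12(\gamma-1)h(\rho)$, which forces $h(\rho)\ge c_\gamma h(\rho_0)$ and hence $\rho=H\ge c_\gamma'\rho_0$. It then remains to prove a gradient estimate $|\g\psi_L|\le C\rho_0$ uniform in $L$ and $\rho_0$. For this I would rescale, setting $\tilde\psi=\psi_L/\rho_0$: the bound $|w|\le C\rho_0$ becomes an $O(1)$ bound on $\bar\psi/\rho_0-\tilde\psi$, the density normalizes to $\tilde\rho=\rho/\rho_0=O(1)$, and as $\rho_0\ra\infty$ the truncated equation degenerates to a uniformly elliptic, essentially Laplace-type (incompressible) equation. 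Up-to-the-boundary Schauder/$W^{2,p}$ estimates, using the $C^{1,\alpha}$ regularity of $\Gamma$ and handling the corner points $P_1,P_2$, then give $|\g\tilde\psi|\le C$, i.e. $|\g\psi_L|\le C\rho_0$. Combining, $q=|\g\psi_L|/\rho\le C\rho_0/(c_\gamma'\rho_0)=O(1)$, so
\[
\frac{|\g\psi_L|^2}{\gamma H^{\gamma+1}}=\frac{q^2}{(\gamma-1)h(\rho)}\le \frac{C}{(\gamma-1)c_\gamma h(\rho_0)}\le \frac{C'}{\rho_0^{\gamma-1}}\xrightarrow[\rho_0\to\infty]{}0 .
\]
Thus there is $\bar\rho_0$ such that for $\rho_0>\bar\rho_0$ this quantity is $\le 1/16$, the truncation is inactive, and $\psi_L$ solves the genuine equation with the stated subsonic bound.

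Finally, the uniform subsonic bound renders the true equation uniformly elliptic independently of $L$, so uniform $C^{1,\alpha}_{loc}$ estimates give a subsequence $\psi_L\ra\psi$ locally in $C^1$ as $L\ra\infty$; the limit solves \eqref{b01} in $\O$ and inherits all three bounds. The main obstacle is exactly the gradient estimate $|\g\psi_L|\le C\rho_0$ that is \emph{simultaneously} uniform in the nozzle height $L$ and in $\rho_0$: one must run the elliptic estimates up to the $C^{1,\alpha}$ boundary and across the corner points $P_1,P_2$ (where the horizontal velocity vanishes) on an unbounded domain, keeping every constant dependent only on the geometry $\max f,\max u_0$ and on $\gamma$. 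This is where the convexity hypothesis $u_0''\ge0$ (for the maximum principle) and the large-density rescaling (for the subsonic closure) are indispensable.
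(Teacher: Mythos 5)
Your overall architecture --- subsonic truncation, solution in the nozzles $\Omega_L$, $L$-uniform estimates run on the difference $\psi_L-\bar\psi_L$ rather than on $\psi_L$ itself, closure of the truncation for large $\rho_0$ by playing $|\nabla\psi_L|\leq \msC\rho_0$ against $\Sigma(\B_L(\psi_L;\rho_0))\geq C\rho_0^{\frac{\gamma+1}{2}}$, and finally $L\to\infty$ --- coincides with the paper's, and your observation that $\bar\psi$ is an exact solution of the equation is correct and is used there as well. The genuine gap is in the step you call ``the $L^\infty$ bounds'': the estimate $\bar\psi-\psi_L\leq C\rho_0$ cannot be obtained by applying a maximum principle directly to $w=\bar\psi-\psi_L$. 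The linearized equation for the difference of two solutions (the paper's \eqref{sup1}, with coefficients \eqref{c25}--\eqref{c28}) has the form $\p_i\left(a_{ij}\p_j w+b_iw\right)=b_i\p_iw+dw$, where $b_i$ and $d$ come from differentiating the memory terms $F(\cdot;\rho_0)$ and $H(\cdot,\cdot;\rho_0)$ in $\psi$. This operator does not satisfy the sign hypotheses of the classical weak maximum principle (one would need something like $\p_ib_i\leq d$ pointwise, which is not available), and the quasilinear comparison theorem \cite[Theorem 10.1]{GT} does not apply either, since $a_{ij}$ depends on $\psi$ through $H$. The only comparison device this structure affords is the energy identity obtained by testing with the positive part of the difference, in which $a_{ij}$, $b_i$, $d_1$ combine into a perfect square and $d_2\geq 0$ by convexity; but that identity requires the test function to vanish on $\p\Omega_L$, i.e.\ it compares two exact solutions with \emph{ordered boundary data}. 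It does give $\psi_L\leq\bar\psi_L$ (their boundary values are ordered), but it cannot give $w\leq C\rho_0$: testing with $(w-C\rho_0)^+$ produces an uncontrolled term $C\rho_0\iint b_i\,\p_i(w-C\rho_0)^+\,dx$, and the shifted function $\bar\psi-C\rho_0$ is not a supersolution because the equation is not invariant under constant shifts of $\psi$ (a shift changes the arguments of $F$ and $H$).

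This is precisely why the paper devotes a separate proposition to constructing the auxiliary triple $(\chi(s),\rho_{1,L},u_{1,L})$: the function $\hat\psi_L$ in \eqref{defhpsiL} is an exact solution of the same equation on the strip $\{J<x_2<L\}$, $J=\max f$, describing a shifted background flow with a \emph{modified} constant density $\rho_{1,L}<\rho_0$ fixed by the mass-flux condition \eqref{b50}. Since $\hat\psi_L\leq\psi_L$ on the boundary of that strip, the perfect-square energy argument (Proposition \ref{proplinfty}, where convexity enters through $d_2\geq0$) yields $\psi_L\geq\hat\psi_L$ for $x_2\geq J$, and a purely ODE-level computation (the streamline shift satisfies $0\leq\chi(s)-s\leq J$) gives $\bar\psi_L-\hat\psi_L\leq \rho_0\, J\max u_0$, which is \eqref{estbhpsi}; combining the two gives Corollary \ref{cor35}, i.e.\ your bound $w\leq C\rho_0$ with $C$ depending only on $\max f$ and $\max u_0$. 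Without this auxiliary solution, or some substitute supersolution, your argument stops at this point; and since your gradient estimate, the subsonic closure, and the passage $L\to\infty$ all consume the $L$-uniform bound $\|\psi_L-\bar\psi_L\|_{L^\infty}\leq C\rho_0$, the gap is fatal to the proof as proposed. (A minor additional imprecision: on the unbounded domain $\Omega_L$ there are no ``ends'' where $w=0$; the far-field control of $\psi_L-\bar\psi_L$ is itself nontrivial and is the content of Lemma \ref{lem37}.)
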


The problem \eqref{b01} for subsonic flows is a Dirichlet problem
for a quasilinear elliptic equation. The domain $\Omega$ is
unbounded in both $x_1$ and $x_2$ directions so that the stream
function becomes an unbounded function, which is one of the main
differences from the problem of subsonic flows in infinitely long
nozzles. To overcome this difficulty, we first establish the
approximated problems in some infinitely long nozzles and then
obtain the existence of the subsonic solution in $\Omega$ by the
uniform estimates for the approximated solutions.

\subsection{Existence of subsonic solutions in nozzles}
Let $J=\sup f(x_1)$. Given $L\in \mathbb{N}$ satisfying $L>J$, define
 \be\label{b1} \O_L=\{x\in\O|f(x_1)<x_2< L\}
\ \  \text{ and}\ \ \Gamma_L=\{(x_1, L)|x_1\in \mathbb{R}\}. \ee

\begin{figure}[!h]
\includegraphics[width=130mm]{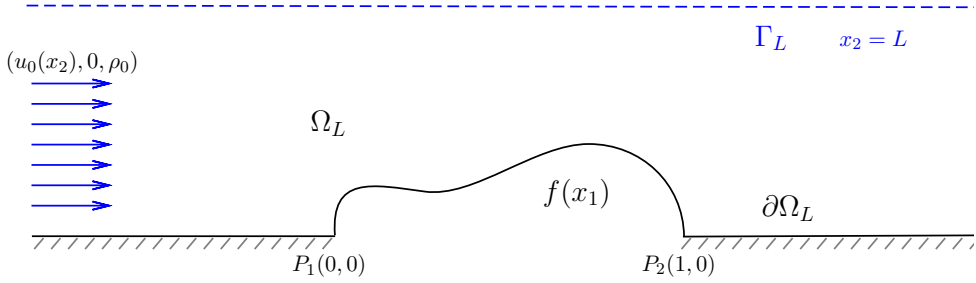}
\caption{Truncated domain $\O_L$}\label{f6}
\end{figure}

Let \be\label{b4} g_L(x_2)=\left\{\ba{ll}u_0'(x_2)&\text{if}\
\ x_2\leq L-1,\\
u_{0}'(L-1)(L-x_2)\ \ \ \ \ &\text{if}\ \ L-1<x_2\leq L,\ea\right.
\ee and $u_{0,L}(x_2)=u_0(0)+\int_0^{x_2}g_L(s)ds$. Hence, it
follows that for $x_2\in [0, L]$,
\[
u_{0,L}(x_2) \geq u_{0}(L-1) +\frac{u_0'(L-1)}{2}.
\]
If $L$ is sufficiently large, then $u_{0,L}(x_2) \geq \bar u/2$ for all $x_2\in [0, L]$.
Furthermore,  $u_{0,L}(x_2)$ satisfies $u_{0,L}'(L)=g_L(L)=0$ and
\be\label{b05}
u_{0,L}''(x_2)=g_L'(x_2)=\left\{\ba{ll}
u_0''(x_2)\quad\quad\quad &\text{if}\quad x_2\leq L-1,\\
-u_0'(L-1)&\text{if}\quad L-1<x_2\leq L.
\ea\right.
\ee
Therefore, $u_{0,L}''(x_2)\geq 0$ for $x_2\in [0,L]$.

Let $\kappa_L(\psi;\rho_0)$ satisfy
\begin{equation}\label{kappaL}
\psi=\int_{0}^{\kappa_L(\psi;\rho_0)}\rho_0u_{0,L}(s)ds.
\end{equation}
Denote
\begin{equation}\label{defFmL}
F_L(\psi;\rho_0)=u_{0,L}(\kappa_L(\psi;\rho_0)),\,\,{W}_L(\psi) ={F}_L(\psi;\rho_0) {F}_L'(\psi;\rho_0),\,\, m_L=: \int_0^L \rho_0 u_{0, L} (s) ds,
\end{equation}
and
\begin{equation}
{\mathcal{B}}_L(\psi;\rho_0) =h(\rho_0)+\f{1}{2}{F}_L^2(\psi;\rho_0)\quad  \text{and}\quad H_L(|\nabla \psi|^2, \psi;\rho_0) =\mclH(|\nabla \psi|^2,\B_L(\psi;\rho_0)).
\end{equation}
We consider the problem
\begin{equation}\label{Tbvp}
\left\{
\begin{aligned}
& \left(\left(1-\frac{|\nabla\psi|^2}{\gamma H_L^{\gamma+1}}\right)\delta_{ij} +\frac{\partial_i\psi\partial_j\psi}{\gamma H_L^{\gamma+1}}
\right)\partial_{ij}\psi ={W_L {H}_L^2}\quad \text{in}\,\,\Omega_L,\\
& \psi=0\quad \text{on}\,\, \Gamma, \quad \psi=m_L \quad \text{on}\,\, \Gamma_L.
\end{aligned}
\right.
\end{equation}
The problem \eqref{Tbvp} has been studied in detail in \cite{XX3,
DXX, DX}. For the convenience, we give a sketch for the study of
\eqref{Tbvp} in this subsection. There are several difficulties for
this problem. First, $\B_L$ is not well-defined if $\psi \notin [0,
m_L]$. Second, the equation in \eqref{Tbvp} is degenerate as
$\frac{|\nabla \psi|^2}{\gamma H_L^{\gamma+1}}$ approaches to $1$.
Finally, the problem is on an unbounded domain.

It is easy to see that
\begin{equation}
 F_L'(0;\rho_0)\leq 0 \quad \text{and}\quad {F}_L'(m_L;\rho_0)=0.
\end{equation}
If we extend ${F}_L$ to $\check{F}_L$ such that
\begin{equation}\label{defFL}
\check{F}_L(s;\rho_0)=\left\{
\begin{array}{ll}
{F}_L(s;\rho_0)\,\, &\text{if} \,\, s\in [0, m_L],\\
F_L(m_L;\rho_0)\,\, &\text{if}\,\, s>m_L,\\
{F}_L(0;\rho_0)+\frac{{F}_L'(0;\rho_0)}{2}\left(s+\frac{s^2}{2}\right)\,\, &\text{if}\,\, -1\leq s<0,\\
{F}_L(0;\rho_0)-\frac{{F}_L'(0;\rho_0)}{2}\,\, &\text{if}\,\, s<-1,
\end{array}
\right.
\end{equation}
then $\check{F}_L$ is a continuous function.  Define
\begin{equation}\label{defcWB}
\check{W}_L(\psi;\rho_0) =\check{F}_L(\psi;\rho_0) \check{F}_L'(\psi;\rho_0)\quad \text{and}\quad
\check{\mathcal{B}}_L(\psi;\rho_0) =h(\rho_0)+\f{1}{2}\check{F}_L^2(\psi;\rho_0).
\end{equation}
In order to deal with the possible degeneracy appeared in the
equation in \eqref{Tbvp}, as in \cite{XX3, DXX, DX}. we first
truncate the associated equation. Let $\chi(s)$ be a smooth
increasing odd function satisfying
\begin{equation}\label{defzeta}
\zeta(s)=\left\{
\begin{aligned}
&s\quad\,\,\,\,\,\, \text{if}\,\, |s|\leq 1/2,\\
&5/8\quad \text{if}\,\, s>3/4.
\end{aligned}
\right.
\end{equation}
Define ${\check{H}}_L(|\nabla \psi|^2, \psi; \rho_0)=\mclH\left(\zeta^2\left(\frac{|\nabla\psi|}{\Sigma(\cmB_L(\psi; \rho_0))}\right)\Sigma^2(\cmB_L(\psi; \rho_0)), \cmB_L(\psi;\rho_0)\right)$ where $\Sigma$ is the function defined in \eqref{defSigma}. Instead of \eqref{Tbvp}, we first study the following problem
\begin{equation}\label{Mbvp}
\left\{
\begin{aligned}
& \left(\left(1-\zeta^2\left(\frac{|\nabla\psi|}{\sqrt{\gamma}\cH_L^{\frac{\gamma+1}{2}}}\right)\right)\delta_{ij} +\zeta\left(\frac{\partial_i\psi}{\sqrt{\gamma}\cH_L^{\frac{\gamma+1}{2}}}\right)\zeta\left(\frac{\partial_j\psi}{\sqrt{\gamma}\cH_L^{\frac{\gamma+1}{2}}}\right)
\right)\partial_{ij}\psi  ={\cW_L\cH_L^2}\quad \text{in}\,\,\Omega_L,\\
& \psi=0\quad \text{on}\,\, \Gamma, \quad \psi=m_L \quad \text{on}\,\, \Gamma_L.
\end{aligned}
\right.
\end{equation}

Later on, a constant is said to depend on the elliptic coefficients,
if it depends on the coefficients of an elliptic equation. $C$
denotes a constant independent of $L$ and the elliptic coefficients,
$\msC$ denotes a constant which is independent of $L$ but depends on
the elliptic coefficients, and  $\mfC$ is a constant depending
both $L$ and elliptic coefficients. The value of these constants may
vary from line to line but they keep the same property.

\begin{lemma}\label{lemma31}
For any $\rho_0>\rho_0^*$, there exists a solution $\psi_L\in
C^{2,\a}(\O_L)\cap C^{1,\b}(\bar\O_L)$ to the problem \eqref{Mbvp}
satisfying \be\label{b6} 0\leq\psi_L\leq m_L \ \ \text{in}\ \ \O_L.
\ee
 Furthermore, there exists a $\check{\rho}_{0,L} \in (\rho_0^{*}, \infty)$ such that if $\rho_0> \check{\rho}_{0,L}$, it holds that
\begin{equation}
\max \frac{|\nabla \psi(x; \rho_0)|}{\Sigma(\B_L(\psi; \rho_0))}< \frac{1}{4}.
\end{equation}
\end{lemma}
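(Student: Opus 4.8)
The plan is to exploit the fact that the truncation built into \eqref{Mbvp} renders the principal part uniformly elliptic: since $\zeta$ is bounded by $5/8$, the eigenvalues of the coefficient matrix lie between $1-(5/8)^2$ and $1$ regardless of the size of $\g\psi$, while the right-hand side $\cW_L\cH_L^2$ is bounded, because $\cmB_L$ is bounded (hence so is $\cH_L$) and $\cW_L=\check F_L\check F_L'$ is bounded by construction in \eqref{defFL}. Thus \eqref{Mbvp} is a Dirichlet problem for a uniformly elliptic quasilinear equation with bounded H\"older data, and I would solve it by the method of continuity / Leray--Schauder, following the framework of \cite{XX3,DX,DXX}. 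Because $\O_L$ is unbounded in $x_1$, I would first solve on the bounded truncations $\O_{L,N}=\O_L\cap\{|x_1|<N\}$, imposing $\psi=0$ on $\Gamma$, $\psi=m_L$ on $\Gamma_L$ and the upstream profile $\rho_0\int_0^{x_2}u_{0,L}(s)\,ds$ on the artificial sides $x_1=\pm N$, obtain $C^{1,\a}$ a priori bounds from De Giorgi--Nash--Moser and Schauder theory, and then pass to the limit $N\to\infty$ by interior estimates and a diagonal compactness argument, checking convergence to the upstream state as $x_1\to\pm\infty$.

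Next, for the two-sided bound \eqref{b6} I would run a maximum-principle argument keyed to the sign structure of the extension \eqref{defFL}. On the set $\{\psi>m_L\}$ one has $\check F_L'=0$, hence $\cW_L=0$, so $\psi$ solves the homogeneous uniformly elliptic equation there and cannot exceed its boundary value $m_L$; this forces $\{\psi>m_L\}=\emptyset$. On $\{\psi<0\}$, the formula for $\check F_L'$ in \eqref{defFL} together with $F_L'(0;\rho_0)\le0$ gives $\cW_L\le0$, so $\psi$ is a supersolution, and the minimum principle combined with $\psi\ge0$ on $\p\O_L$ and in the far field yields $\psi\ge0$. The interior $C^{2,\a}$ regularity then follows from Schauder theory for the now uniformly elliptic equation, while the global $C^{1,\b}$ regularity with $\b<\a$ reflects the merely $C^{1,\a}$ corners at $P_1,P_2$ and is obtained by the usual corner barrier / boundary gradient estimates.

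The main obstacle is the last assertion: producing $\check\rho_{0,L}$ so that the normalized gradient stays below $1/4$ for all large $\rho_0$, which is precisely what makes the truncation inactive (since $\zeta(s)=s$ for $|s|\le1/2$, and a momentum below $\Sigma(\cmB_L)$ forces the density to exceed the sonic value, whence $\sqrt\gamma\,\cH_L^{(\gamma+1)/2}\ge\Sigma(\cmB_L)$). I would obtain it from a scaling comparison. On one hand $|\g\psi|=\rho|(u,v)|$ with $\rho\sim\rho_0$ and $|(u,v)|$ controlled by $\max u_0$, together with a uniformly elliptic quasilinear gradient estimate (Gilbarg--Trudinger, Ch.~15) whose constant depends on $\mathrm{osc}\,\psi=m_L\sim\rho_0$ and on $L$, giving $|\g\psi|\le\mfC\,\rho_0$. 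On the other hand $\cmB_L\ge h(\rho_0)\sim\rho_0^{\gamma-1}$ forces $\underline\varrho(\cmB_L)\sim\rho_0$ and hence $\Sigma(\cmB_L)=\sqrt\gamma\,\underline\varrho^{(\gamma+1)/2}\gtrsim\rho_0^{(\gamma+1)/2}$. Therefore
\[
\frac{|\g\psi|}{\Sigma(\cmB_L(\psi;\rho_0))}\ \lesssim\ \mfC\,\rho_0^{\,1-\frac{\gamma+1}{2}}=\mfC\,\rho_0^{\frac{1-\gamma}{2}}\longrightarrow 0 \qquad (\rho_0\to\infty),
\]
since $\gamma>1$, so there is $\check\rho_{0,L}$ with the claimed bound once $\rho_0>\check\rho_{0,L}$.

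The delicate point, and where I expect the real work to lie, is to carry the $\rho_0$-dependence correctly through the quasilinear gradient estimate, so that $|\g\psi|$ is genuinely $O(\rho_0)$ rather than growing faster, and to control $\g\psi$ up to the $C^{1,\a}$ boundary and uniformly as $x_1\to\pm\infty$. At this stage the constant $\mfC$ may still depend on $L$, which is why $\check\rho_{0,L}$ is allowed to depend on $L$; removing that dependence is exactly the uniform-in-$L$ analysis deferred to the subsequent steps. I would also record that, once $|\g\psi|/\Sigma(\cmB_L)<1/4<1/2$, the truncations $\zeta$ act as the identity, so that $\cH_L=H_L$ and $\psi_L$ in fact solves the original problem \eqref{Tbvp}.
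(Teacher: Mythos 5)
Your proposal is correct and follows essentially the same route as the paper: solve the truncated problem on bounded pieces $\O_L\cap\{|x_1|<k\}$ via Leray--Schauder (the paper cites \cite[Theorem 12.5]{GT}), get $0\le\psi\le m_L$ from the maximum principle using the sign structure of $\check{W}_L$ in \eqref{defFL}, obtain the $C^{1,\beta}$ bound of order $\mfC\rho_0$ from the H\"older gradient estimate \cite[Theorem 12.4]{GT} plus the corner estimates of \cite[Section 2.2]{DX}, pass to the limit by Arzela--Ascoli, and conclude with exactly your scaling comparison $|\nabla\psi_L|/\Sigma(\cmB_L)\le (\mfC/C)\rho_0^{(1-\gamma)/2}\le 1/4$ for $\rho_0$ large, with $\check\rho_{0,L}$ allowed to depend on $L$. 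Your closing observations (that the truncation is inactive once the ratio is below $1/2$, and that removing the $L$-dependence of the constant is the real remaining work) are precisely the points the paper addresses in the subsequent uniform-estimate subsections.
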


The proof of this lemma is a combination of \cite[Section 2.2]{DX}
and  \cite[Proposition 3.1]{DXX}. We sketch the proof here.
\begin{proof}
Step 1. Solve the elliptic problem in bounded domains. Set
$\Omega_{L,k} =\Omega_L\cap \{(x_1, x_2)\big||x_1|\leq k\}$. It
follows from \cite[Theorem 12.5]{GT} that there is a solution
$\psi_{L,k}\in C^{2, \alpha}(\Omega_{L,k})\cap
C^0(\overline{\Omega_{L,k}})$ for the problem
\begin{equation}
\left\{
\begin{aligned}
& \left(\left(1-\zeta^2\left(\frac{|\nabla\psi|}{\sqrt{\gamma}\cH_L^{\frac{\gamma+1}{2}}}\right)\right)\delta_{ij} +\zeta\left(\frac{\partial_i\psi}{\sqrt{\gamma}\cH_L^{\frac{\gamma+1}{2}}}\right)\zeta\left(\frac{\partial_j\psi}{\sqrt{\gamma}\cH_L^{\frac{\gamma+1}{2}}}\right)
\right)\partial_{ij}\psi  = {\cW_L\cH_L^2}\quad \text{in}\,\,\Omega_{L,k},\\
& \psi=0\quad \text{on}\,\, \Gamma\cap \partial\Omega_{L,k}, \quad \psi= \int_0^{x_2} \rho_0 u_{0, L} (s) ds \quad \text{on}\,\, \partial\Omega_{L,k}\setminus \Gamma.
\end{aligned}
\right.
\end{equation}

Step 2. Since $\check{W}_L(s;\rho_0)\leq 0$ if $s\leq 0$ and $\check{W}_L(s;\rho_0)=0$ if $s\geq m_L$,  applying the maximum principle implies that
\[
0\leq \psi_{L, k}\leq m_L.
\]

Step 3. The H\"{o}lder gradient estimate \cite[Theorem 12.4]{GT} and the H\"{o}lder gradient estimate near the corners \cite[Section 2.2]{DX} show that
\begin{equation}\label{estpsilk}
\|\psi_{L, 2k}\|_{C^{1,\beta}(\overline{\Omega_{L,k}})} \leq \mfC \rho_0
\end{equation}
where $\mfC$ depends on $u_0$ and $\beta\in (0, \alpha)$ is a positive constant.

Step 4. The estimate \eqref{estpsilk}, together with Arzela-Ascoli lemma, implies that there exists a subsequence of $\{\psi_{L,k}\}$,
still labelled by $\{\psi_{L,k}\}$, converging to $\psi_L$ in any compact subset of $\Omega_L$.
Note that there exists a uniform constant $C$ such that
\[
\Sigma(\B_L(\psi_L;\rho_0))\geq C\rho_0^{\frac{\gamma+1}{2}}.
\]
Therefore,  there exists a $\check{\rho}_{0,L}$ such that if $\rho_0>\check{\rho}_{0,L}$, one has
\[
\frac{|\nabla \psi_L|}{\Sigma(\B_L(\psi_L;\rho_0))}\leq \frac{\mfC\rho_0}{C\rho_0^{\frac{\gamma+1}{2}}}=\frac{\mfC}{C}\rho_0^{\frac{1-\gamma}{2}} \leq  \frac{1}{4}.
\]
This finishes the proof of the lemma.
\end{proof}

The rest of this section is devoted to prove that $\check{\rho}_{0,
L}$ is indeed independent of $L$, so that there exist subsonic flows
in $\Omega$ as long as $\rho_0$ is suitably large, which yields the
proof of Proposition \ref{prop31}.
\subsection{Uniform estimates for subsonic flows in nozzles}
Since the stream function is unbounded as $x_2\to \infty$, in order
to get uniform estimates for the stream function, we study the
difference of the stream function from the one in the upstream.

\begin{prop}
For any $\epsilon>0$, there exists an $\bar L>0$ such that if $L>\bar L$, then for any $\rho_0\in ((1+\epsilon)\rho_0^*, \infty)$, there exists a unique
triple $(\chi(s), \rho_{1, L}, u_{1,L})$ satisfying that
\begin{enumerate}
\item $\rho_{1,L}$ is a constant, $\chi$ is an increasing function maps $[0,L]$ to $[J, L]$, and $u_{1,L}$ is a function on $[J,
L]$, where $J=\sup f(x_1)$;
\item the triple $(\chi(s), \rho_{1, L}, u_{1, L})$ satisfies
 \be\label{g1}
 0<\rho_{1,L}< \rho_0,\quad u_{1,L}(x_2)
> 0\quad\text{and}\quad\max_{J\leq x_2\leq L}u_{1,L}(x_2)<
\sqrt{\gamma} \rho_{1,L}^{\frac{\gamma+1}{2}};\ee
\item
 for $s\in[0,L]$, $(\chi(s), \rho_{1,L}, u_{1,L})$ satisfies
\be\label{b46}
\f{u_{0,L}^2(s)}2+h(\rho_0)=\f{u_{1,L}^2(\chi(s))}2+h(\rho_{1,L})
\ee and \be\label{b47}
\int_0^s\rho_0u_{0,L}(t)dt=\int_J^{\chi(s)}\rho_{1,L}u_{1,L}(t)dt.
\ee
\end{enumerate}
Furthermore, let $\bar\psi_L$ and $\hat\psi_L$ be defined as follows,
\begin{equation}\label{defhpsiL}
\bar\psi_L(x_2)=\rho_0\int_0^{x_2}u_{0,L}(s)ds\quad \text{and}\quad \hat\psi_L(x_2)=\rho_{1,L}\int_J^{x_2}u_{1,L}(s)ds.
\end{equation}
Then, for $x_2\in [J, L]$, it holds that
\begin{equation}\label{estbhpsi}
0\leq \bar\psi(x_2)-\hat\psi(x_2) \leq C \rho_0,
\end{equation}
where $C$ is a uniform constant depending only on $J$ and $\max u_0$.
\end{prop}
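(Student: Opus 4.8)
The plan is to read the required triple off \eqref{b46}--\eqref{b47} as a one-parameter family indexed by the single constant $\rho_{1,L}$, and to reduce the matching requirement $\chi([0,L])=[J,L]$ to a scalar equation. Fixing $\rho_{1,L}\in(0,\rho_0)$, Bernoulli's relation \eqref{b46} forces
\[
u_{1,L}(\chi(s))=W(s):=\sqrt{u_{0,L}^2(s)+2\big(h(\rho_0)-h(\rho_{1,L})\big)},\qquad s\in[0,L],
\]
which is well defined and satisfies $W\ge u_{0,L}>0$ because $h$ is increasing and $\rho_{1,L}<\rho_0$. Differentiating the flux identity \eqref{b47} gives the ODE $\chi'(s)=\rho_0u_{0,L}(s)/(\rho_{1,L}W(s))$ with $\chi(0)=J$, which determines $\chi$ as a strictly increasing map and, by inversion, $u_{1,L}(y)=W(\chi^{-1}(y))$ on $[J,\chi(L)]$. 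All of (1)--(3) then hold by construction, and the only remaining requirement, that $\chi$ map onto $[J,L]$, is the scalar equation
\[
G(\rho_{1,L}):=\int_0^L \f{\rho_0\,u_{0,L}(s)}{\rho_{1,L}\,W(s)}\,ds=L-J.
\]

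Next I would establish that this equation has a unique subsonic root. The integrand is $\rho_0u_{0,L}(s)$ divided by the momentum $\rho_{1,L}u_{1,L}(\chi(s))=\mathfrak m(\rho_{1,L};\mathfrak s(s))$ at the frozen Bernoulli level $\mathfrak s(s)=\tfrac12 u_{0,L}^2(s)+h(\rho_0)$. By the monotonicity stated right after \eqref{eq2Ber}, $\mathfrak m(\cdot;\mathfrak s)$ is strictly decreasing in the density along the subsonic branch, so $G$ is strictly increasing in $\rho_{1,L}$ there, with $G(\rho_0)=L$ (the integrand equals $1$ at $\rho_{1,L}=\rho_0$). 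It therefore suffices to show that the infimum of $G$, attained at the sonic density $\rho_{cr,L}$ (the value for which $\max_{[0,L]}W=c(\rho_{cr,L})$), lies strictly below $L-J$. This is the main obstacle, and it is exactly where the hypotheses $L>\bar L$ and $\rho_0>(1+\epsilon)\rho_0^*$ enter. The bound $\rho_0>(1+\epsilon)\rho_0^*$ keeps the upstream Mach number bounded away from $1$ uniformly, giving a definite compression margin $\rho_0-\rho_{cr,L}$, and hence by the strict momentum monotonicity a uniform far-field momentum excess $\mathfrak m(\rho_{cr,L};\mathfrak s_\infty)\ge(1+\delta)\rho_0\bar u$ with $\delta=\delta(\epsilon)>0$. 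Since $u_{0,L}\to\bar u$, the integrand at $\rho_{cr,L}$ is bounded away from $1$ over most of $[0,L]$, so $G(\rho_{cr,L})\le(1-\eta)L$ for some $\eta=\eta(\epsilon)>0$, while $(L-J)/L\to1$; choosing $\bar L$ large then yields $G(\rho_{cr,L})<L-J<L=G(\rho_0)$. Strict monotonicity produces a unique $\rho_{1,L}\in(\rho_{cr,L},\rho_0)$, and the remaining assertions in \eqref{g1} are immediate: $\rho_{1,L}<\rho_0$, $u_{1,L}\ge u_{0,L}>0$, and subsonicity from $\rho_{1,L}>\rho_{cr,L}$.

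Finally I would derive \eqref{estbhpsi}. Changing variables $t=\chi(\sigma)$ in $\hat\psi_L$ and using the ODE for $\chi$ collapses the flux identity to $\hat\psi_L(y)=\bar\psi_L(\chi^{-1}(y))$ for $y\in[J,L]$. Moreover $\chi'(s)=\mathfrak m(\rho_0;\mathfrak s(s))/\mathfrak m(\rho_{1,L};\mathfrak s(s))<1$ by the same momentum monotonicity (since $\rho_{1,L}<\rho_0$), so $\chi(s)-s$ decreases from $\chi(0)=J$ to $\chi(L)-L=0$; hence $0\le\chi(s)-s\le J$, i.e. $0\le y-\chi^{-1}(y)\le J$. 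As $\bar\psi_L$ is increasing,
\[
0\le \bar\psi_L(y)-\hat\psi_L(y)=\bar\psi_L(y)-\bar\psi_L(\chi^{-1}(y))=\rho_0\int_{\chi^{-1}(y)}^{y}u_{0,L}(s)\,ds\le \rho_0\,J\,\max u_0,
\]
which is \eqref{estbhpsi} with $C=J\max u_0$ (using $\max_{[0,L]}u_{0,L}=\max u_0$, valid because \eqref{a2} makes $u_0$ nonincreasing). Thus the substantive work is concentrated in the second paragraph, the uniform lower bound on the subsonic margin forcing $G(\rho_{cr,L})<L-J$ for large $L$; both the construction and the estimate \eqref{estbhpsi} follow cleanly once the momentum monotonicity after \eqref{eq2Ber} is in hand.
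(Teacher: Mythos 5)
Your construction follows the paper's proof almost line by line: fix $\rho_{1,L}$, read $u_{1,L}\circ\chi$ off Bernoulli's law (your $W(s)$ is the paper's $\sqrt{D(s;\rho_{1,L})}$), differentiate the flux identity to get the ODE $\chi'(s)=\rho_0u_{0,L}(s)/(\rho_{1,L}W(s))$, $\chi(0)=J$, reduce the surjectivity of $\chi$ to the scalar equation $G(\rho_{1,L})=L-J$, use strict monotonicity of $G$ on the subsonic branch together with $G(\rho_0)=L$, and finally derive \eqref{estbhpsi} from $\chi'<1$, which gives $0\le\chi(s)-s\le J$ and $\hat\psi_L=\bar\psi_L\circ\chi^{-1}$. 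All of this is exactly the paper's argument, including the constant $C=J\max u_0$.

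The one place where your write-up has a genuine gap is the crux you yourself identify: the bound $G(\rho_{cr,L})<L-J$, uniformly in $\rho_0\in((1+\epsilon)\rho_0^*,\infty)$. You derive the uniform momentum excess $\mathfrak{m}(\rho_{cr,L};\mathfrak{s}_\infty)\ge(1+\delta)\rho_0\bar u$, $\delta=\delta(\epsilon)>0$, from ``a definite compression margin $\rho_0-\rho_{cr,L}$'' plus ``strict momentum monotonicity.'' That inference does not stand on its own: strict monotonicity yields a quantitative gap only on compact parameter sets, and here the range of $\rho_0$ is unbounded, with $\rho_{cr,L}$, $h(\rho_0)-h(\rho_{cr,L})$, and the momenta themselves all scaling with $\rho_0$; a multiplicative gap uniform in $\rho_0$ must come from a scaling computation, not from monotonicity. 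The claim is true, and the paper proves it by precisely such a computation: solving \eqref{b49} gives $\rho_{cr,L}^{\gamma-1}=\bigl(\tfrac{\gamma-1}{\gamma(\gamma+1)}\max u_{0,L}^2+\tfrac{2}{\gamma+1}\rho_0^{\gamma-1}\bigr)$, so that the integrand of $G(\rho_{cr,L})$ is bounded \emph{pointwise for every} $s\in[0,L]$ by $1/\G(\tau)$ with $\tau=\max u_{0,L}^2/(\gamma\rho_0^{\gamma-1})$, where $\G$ is the explicit function in the paper, decreasing on $(0,1]$ with $\G(1)=1$; since $\tau\le(1+\epsilon)^{1-\gamma}<1$, this yields $G(\rho_{cr,L})\le L/\G\bigl((1+\epsilon)^{1-\gamma}\bigr)$, and $\bar L$ is then chosen so that $L/\G\bigl((1+\epsilon)^{1-\gamma}\bigr)\le L-J$. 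Once you supply this computation (your $1+\delta$ is exactly $\G\bigl((1+\epsilon)^{1-\gamma}\bigr)$), your far-field splitting --- integrand $\le 1$ everywhere by monotonicity, $\le 1-\eta$ on the set where $u_{0,L}$ is close to $\bar u$, whose complement has measure bounded independently of $L$ --- does close the argument; but note it is strictly more work than the paper's route, since the pointwise bound $1/\G(\tau)$ already holds on all of $[0,L]$ and makes the splitting unnecessary.
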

\begin{proof}
Define  $\underline\varrho_{1,L}$ and $\bar\varrho_{1,L}$ to be the
constants satisfying \be\label{b49}
h(\bar\varrho_{1,L})=\f12\min u_{0,L}^2(x_2) +h(\rho_0) \ \ \
\text{and}\ \ \ \
\f12 \gamma \underline\varrho_{1,L}^{\gamma-1}+h(\underline\varrho_{1,L})=\f12\max
u_{0,L}^2(x_2) +h(\rho_0). \ee If $\rho_0>\rho_0^*$, then it is
easy to see that $\underline \varrho_{1,L}<\rho_0<\bar\varrho_{1,L}$.
Furthermore, for any $\rho\in \left(\underline{\varrho}_{1,L},
\bar{\varrho}_{1,L}\right)$, one has
$$\min_{s\in [0,L]}D(s;\rho)>0\ \ \ \ \text{and}\ \ \ \ \ \max_{s\in[0,L]} \sqrt{D(s;\rho)} < \sqrt{\gamma}\rho^{\frac{\gamma-1}{2}} \quad \text{for} \quad s\in [0,1],
$$
where $D(s;\rho)=2(h(\rho_0)-h(\rho))+u_{0,L}^2(s)$.

Differentiating \eqref{b47} with respect to $s$ and substituting
\eqref{b46} into the associated equation yield that \be\label{b48}
\f{d\chi}{ds}=\f{\rho_0u_{0,L}(s)}{\rho_{1,L}\sqrt{D(s;\rho_{1,L})}}\
 \ \  \text{for}\ \ s\in[0,L], \,\, \text{and}\, \,  \chi(0)=J.\ee
Hence, it suffices to show that there exists an $\bar L>0$ such that if $L>\bar L$, then for any $\rho_0 \in ((1+\epsilon)\rho_0^*, \infty)$  there exists a unique $\rho_{1,L}\in\left(\underline\varrho_{1,L},\bar\varrho_{1,L}\right)$ such that
\be\label{b50}
\int_0^L\f{\rho_0u_{0,L}(s)}{\rho_{1,L}\sqrt{D(s;\rho_{1,L})}}ds=L-J.
\ee
  Once  $\rho_{1,L}$ is determined, then $\chi(s)$ follows from \eqref{b48} and  $u_1(\chi(s))=\sqrt{D(s; \rho_{1,L})}$.

Let
$G(\rho)=\int_0^L\f{\rho_0u_{0,L}(s)}{\rho\sqrt{D(s;\rho)}}ds$.
A direct computation yields that
\begin{equation}\label{beq2-9}
G'(\rho)=\int_0^L\f{\rho_0u_{0,L}(s)}{\rho^2D^{3/2}(s;\rho)}\left(\gamma \rho^{\gamma-1}-D(s;\rho)\right)ds,
\end{equation}
which is always positive for $\rho\in
\left(\underline\varrho_{1,L},\bar\varrho_{1,L}\right)$. Thus
$G(\rho)$ is a strictly increasing function in
$\left(\underline\varrho_{1,L},\bar\varrho_{1,L}\right)$. Therefore, the
range of $G(\rho)$ on $(\underline\varrho_{1,L}, \bar\varrho_{1,L})$
is $\left(G(\underline\varrho_{1,L}), G(\bar\varrho_{1,L})\right)$.
We claim that $L-J$ lies in the interval
$\left(G(\underline\varrho_{1,L}), G(\bar\varrho_{1,L})\right)$ if $L$ is sufficiently large.

First, it is easy to see that $G(\rho_0) =L>L-J$. It follows from
the definition of $\underline\varrho_{1,L}$ in \eqref{b49} that
\be\label{g3}
\begin{aligned}
&G(\underline\varrho_{1,L})=\int_0^L\f{\rho_0u_{0,L}(s)}{\underline\varrho_{1,L}\sqrt{\gamma \underline\varrho_{1,L}^{\gamma-1}+u^2_{0,L}(s)-\max
u_{0,L}^2(s)}}ds\\
 =&
\int_0^L\f{\rho_0u_{0,L}(s)}{\left(\f{\gamma-1}{\gamma(\gamma+1)}\max
u_{0,L}^2(s)+\f2{\gamma+1}\rho_0^{\gamma-1}\right)^{\f1{\gamma-1}}\sqrt{\frac{2}{\gamma+1}(\gamma \rho_0^{\gamma-1} -\max u_{0,L}^2) +u_{0,L}^2(s)}}ds\\
=&\int_0^L\f{1}{\left(\f{\gamma-1}{\gamma+1}\frac{\max
u_{0,L}^2(s)}{\gamma \rho_0^{\gamma-1}}+\f2{\gamma+1}\right)^{\f1{\gamma-1}}\sqrt{\frac{2}{\gamma+1}(\frac{\gamma \rho_0^{\gamma-1}}{\max u_{0,L}^2}-1) \frac{\max u_{0,L}^2} {u_{0,L}^2(s)}+1}}ds.
\end{aligned}
\ee
When $\rho_0>\rho_0^*=\left(\frac{\max u_{0,L}^2}{\gamma}\right)^{\frac{1}{\gamma-1}}$, one has
\be
G(\underline{\varrho}_{1,L})\leq
\int_0^L\f{1}{\left(\f{\gamma-1}{\gamma+1}\frac{\max
u_{0,L}^2(s)}{\gamma \rho_0^{\gamma-1}} +\f2{\gamma+1}\right)^{\f1{\gamma-1}}\sqrt{\frac{2}{\gamma+1}(\frac{\gamma \rho_0^{\gamma-1}}{\max u_{0,L}^2}-1) +1}}ds.
\ee
Define
\[
\G(\tau)= \left(\f{\gamma-1}{\gamma+1}\tau+\f2{\gamma+1}\right)^{\f1{\gamma-1}}\sqrt{\frac{2}{\gamma+1}\left(\frac{1}{\tau}-1\right) +1}.
\]
The straightforward computations yield
\begin{equation}
\G'(\tau)= \left(\f{\gamma-1}{\gamma+1}\tau+\f2{\gamma+1}\right)^{\f1{\gamma-1}}\sqrt{\frac{2}{\gamma+1}\left(\frac{1}{\tau}-1\right) +1}\frac{\tau-1}{\tau(2+\tau(\gamma-1))},
\end{equation}
so $\G(\tau)$ is a decreasing function on $[0,1]$. Since $\G(1)=1$,
for any $\epsilon>0$, there exists an $\bar L>0$ such that if
$L>\bar L$, it holds that
\[
\G\left(\frac{1}{(1+\epsilon)^{\gamma-1}}\right)\geq \frac{L}{L-J}.
\]
Therefore, for any $\rho_0>(1+\epsilon)\rho_0^*$, if $L>\bar L$,
\begin{equation}
G(\underline{\varrho}_{1,L})\leq \int_0^L \frac{1}{\G\left(\frac{\max u_{0,L}^2(s)}{\gamma \rho_0^{\gamma-1}}\right)}ds\leq \int_0^L \frac{1}{\G\left(\frac{1}{(1+\epsilon)^{\gamma-1}}\right)}ds\leq \int_0^L\frac{L-J}{L} ds=L-J.
\end{equation}
Hence, for any $\rho_0\in ((1+\epsilon)\rho_0^*, \infty)$,  there exists a unique $\rho\in (\underline{\varrho}_{1,L}, \rho_0)$ satisfying $G(\rho)=L-J$.

Define $\bar \chi(s)= \chi(s)-s$. It follows from \eqref{b48} that
\begin{equation}
\bar\chi'(s) =\frac{\rho_0 u_{0, L}(s) -\rho_{1,L} u_{1, L}(\chi(s))}{\rho_{1,L} u_{1,L}(\chi(s))}.
\end{equation}
As mentioned in Section \ref{secform}, for given $\mathfrak{s}$ in
\eqref{eq2Ber}, $\mathfrak{m}$ is strictly decreasing with respect
to $\rho$ in the subsonic region. Since $\rho_{1,L}<\rho_0$,
one has $\rho_0 u_{0,L}(s) < \rho_{1,L}u_{1,L}(\chi(s))$. Therefore,
$\bar\chi'(s)<0$. Hence, $\bar\chi(L) \leq \bar\chi(s)\leq
\bar\chi(0)$. This gives $0 \leq \chi(s)-s\leq J$ which is
equivalent to $0 \leq x_2-\chi^{-1}(x_2)\leq J$ for $x_2\in [J,L]$.
If $x_2\in [J, L]$, then direct computations yield
\begin{equation}
\begin{aligned}
\bar\psi(x_2)-\hat\psi(x_2) = &\int_0^{x_2} \rho_0 u_{0,L}(s)ds -\int_J^{x_2} \rho_{1,L}u_{1,L}(s)ds\\
= & \int_0^{x_2} \rho_0 u_{0,L}(s)ds -\int_{\chi^{-1}(J)}^{\chi^{-1}(x_2)} \rho_{1,L}u_{1,L}(\chi(s))\chi'(s)ds\\
=&\int_0^{x_2} \rho_0 u_{0,L}(s)ds -\int_{0}^{\chi^{-1}(x_2)} \rho_{0,L}u_{0,L}(s)ds\\
=&\int_{\chi^{-1}(x_2)}^{x_2}\rho_0 u_{0, L}(s) ds,
\end{aligned}
\end{equation}
where \eqref{b48} has been used in the third equality. Since $0\leq
x_2- \chi^{-1}(x_2)\leq J$, one gets
\begin{equation}
0\leq \bar\psi(x_2)-\hat\psi(x_2)\leq C \rho_0,
\end{equation}
where the constant $C$ depends only on $J$ and $\max u_0$.
This finishes the proof for the proposition.
\end{proof}

One of our key estimates is the following upper and lower bounds
estimate for $\psi_L$, which also plays an important role in proving  the uniform gradient
estimate for $\psi_L-\bar \psi_L$.
\begin{prop}\label{proplinfty}
Let $\hat\Omega_L=\{(x_1, x_2)|x_2\in (J,L), x_1\in \mathbb{R}\}$.
For any $\epsilon>0$, there exists an $\bar L>0$ such that if
$L>\bar L$, then for any $\rho_0\in ((1+\epsilon)\rho_0^*, \infty)$,
if $\psi_L$ is a subsonic solution of the problem \eqref{Tbvp}, then
it holds that
\begin{equation}\label{linftyest}
\psi_L\geq  \hat{\psi}_L\,\,\text{in}\,\, \hat{\Omega}_L\,\,\text{and}\,\, 0\leq \psi_L \leq \bar{\psi}_L\,\,{\text
in}\,\,\O_L,
\end{equation}
where $\hat\psi_L$ and $\bar\psi_L$ are defined in \eqref{defhpsiL}.
\end{prop}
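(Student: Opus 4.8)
The plan is to use the two shear-flow stream functions $\bar\psi_L$ and $\hat\psi_L$ as upper and lower barriers and to compare them with $\psi_L$ by the maximum principle, after observing that all three functions solve the \emph{same} quasilinear equation in \eqref{Tbvp}. First I would check that the barriers are exact solutions of that equation. Both are stream functions of one-dimensional flows: $(\rho_0,u_{0,L})$ on $[0,L]$ for $\bar\psi_L$, and $(\rho_{1,L},u_{1,L})$ on $[J,L]$ for $\hat\psi_L$. For $\bar\psi_L$ one has $\nabla\bar\psi_L=(0,\rho_0u_{0,L})$, and the Bernoulli relation \eqref{b02} forces $H_L(|\nabla\bar\psi_L|^2,\bar\psi_L;\rho_0)\equiv\rho_0$; hence the $(2,2)$-entry of the coefficient matrix is identically $1$ and the equation collapses to $\partial_{22}\bar\psi_L=\rho_0u_{0,L}'=W_L(\bar\psi_L)\rho_0^{2}$, which is exactly the identity $W_L=F_LF_L'=u_{0,L}'(\kappa_L)/\rho_0$. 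For $\hat\psi_L$ the decisive fact is that the construction \eqref{b46}--\eqref{b47} gives $\hat\psi_L(\chi(s))=\bar\psi_L(s)$ and preserves the Bernoulli function streamline by streamline, so that $\B_L(\,\cdot\,;\rho_0)$, read as a function of $\psi$, is common to both shear flows. Consequently $H_L(|\nabla\hat\psi_L|^2,\hat\psi_L;\rho_0)\equiv\rho_{1,L}$ and the same computation yields $\partial_{22}\hat\psi_L=\rho_{1,L}u_{1,L}'=W_L(\hat\psi_L)\rho_{1,L}^{2}$, i.e. $\hat\psi_L$ solves the equation on $\hat\Omega_L$. Both barriers are strictly subsonic, $\bar\psi_L$ because $\rho_0>(1+\epsilon)\rho_0^*$ and $\hat\psi_L$ by \eqref{g1}.

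Next I would record the boundary comparisons. On $\Gamma$ we have $\psi_L=0\le\bar\psi_L$, and on $\Gamma_L$ we have $\psi_L=m_L=\bar\psi_L(L)$, so $\psi_L\le\bar\psi_L$ on $\partial\Omega_L$. The lower bound $\psi_L\ge0$ follows from the sign condition $\check{W}_L(s)\le0$ for $s\le0$ exactly as in Step~2 of Lemma \ref{lemma31}; then on $\partial\hat\Omega_L=\{x_2=J\}\cup\{x_2=L\}$ we have $\hat\psi_L(J)=0\le\psi_L$ and $\hat\psi_L(L)=m_L=\psi_L$, so $\psi_L\ge\hat\psi_L$ there. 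Subtracting the two copies of the equation and integrating the nonlinearity along the segment joining the two (subsonic) solutions, the difference $w:=\psi_L-\bar\psi_L$, respectively $\psi_L-\hat\psi_L$, satisfies a linear equation $a_{ij}\partial_{ij}w+b_i\partial_iw+c\,w=0$ whose principal part is uniformly elliptic because the whole segment stays subsonic, and whose zeroth-order coefficient is $\le0$: here the convexity hypothesis $u_0''\ge0$ of Theorem \ref{th1} enters through $W_L'(\psi)=u_{0,L}''(\kappa_L)/(\rho_0^{2}u_{0,L})\ge0$, which gives the monotonicity of the source needed to sign $c$.

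Because $0\le\psi_L\le m_L$ and $0\le\bar\psi_L,\hat\psi_L\le m_L$, the function $w$ is bounded; on the strip $\hat\Omega_L$ of finite width $L-J$, and on $\Omega_L$ which sits in a strip of width $L$, a Phragm\'en--Lindel\"of argument then upgrades the boundary inequalities to $\psi_L\le\bar\psi_L$ in $\Omega_L$ and $\psi_L\ge\hat\psi_L$ in $\hat\Omega_L$. Equivalently, one may run the comparison on the bounded truncations $\Omega_{L,k}$ of Lemma \ref{lemma31}, where $\psi_{L,k}$ equals $\bar\psi_L$ on the top and lateral boundaries, equals $\bar\psi_L\ge\hat\psi_L$ on the sides by \eqref{estbhpsi}, and is $\ge0$ on $\{x_2=J\}$, and then let $k\to\infty$. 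The main obstacle is precisely this last step: one must simultaneously guarantee uniform ellipticity of the linearized operator along the connecting segment (secured by the subsonicity of $\psi_L$ and of the barriers) and the favorable sign of its zeroth-order term (secured by $u_0''\ge0$), and then justify the maximum principle on the unbounded strip through the boundedness of $w$.
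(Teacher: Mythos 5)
You set the comparison up exactly as the paper does---both $\bar\psi_L$ and $\hat\psi_L$ are exact, strictly subsonic solutions of the equation in \eqref{Tbvp}, with the boundary inequalities you list---but the mechanism you propose for the comparison itself has a genuine gap. Write $\mathcal{A}_{ij}(\nabla\psi,\psi)=\left(1-\frac{|\nabla\psi|^2}{\gamma H_L^{\gamma+1}}\right)\delta_{ij}+\frac{\partial_i\psi\,\partial_j\psi}{\gamma H_L^{\gamma+1}}$ and let $\partial_z$ denote the derivative with respect to the $\psi$-slot. Since $H_L$ depends on $\psi$ through the Bernoulli function, the non-divergence linearization along the segment $\psi_t=t\bar\psi_L+(1-t)\psi_L$ produces, besides the source term you account for, the zeroth-order contribution
\[
\Bigl(\int_0^1\partial_z\mathcal{A}_{ij}(\nabla\psi_t,\psi_t)\,dt\Bigr)\partial_{ij}\bar\psi_L
=\Bigl(\int_0^1\partial_z\mathcal{A}_{22}(\nabla\psi_t,\psi_t)\,dt\Bigr)\rho_0 u_{0,L}'(x_2),
\]
and this term does not have the sign you need. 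Indeed $\mathcal{A}_{22}=1-\frac{(\partial_1\psi)^2}{\gamma H_L^{\gamma+1}}$ and $\partial_z H_L=\frac{W_LH_L^3}{\gamma H_L^{\gamma+1}-|\nabla\psi|^2}$, while under \eqref{a2} one has $u_{0,L}'\le 0$ (convexity plus $u_0'\to0$), hence $W_L=u_{0,L}'(\kappa_L)/\rho_0\le0$ and $\partial_z\mathcal{A}_{22}\le0$; multiplied by $\rho_0u_{0,L}'\le0$, this contribution to $c$ is \emph{nonnegative}, and strictly positive wherever $\partial_1\psi_L\neq0$ and $u_{0,L}'<0$. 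So the asserted inequality $c\le0$ is not available---convexity only signs the source part $\partial_z(W_LH_L^2)\ge0$---and the Phragm\'en--Lindel\"of/comparison step collapses; note also that \cite[Theorem 10.1]{GT} is inapplicable here precisely because its hypothesis that the principal coefficients be independent of $z$ fails.

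The paper circumvents exactly this obstruction by never invoking a pointwise maximum principle: it linearizes the divergence form $\Div\left(\nabla\psi/H_L\right)=W_LH_L$, so that the unsigned $z$-derivative of the flux appears as a first-order coefficient $b_i$ entering symmetrically inside and outside the divergence (problem \eqref{sup1} with coefficients \eqref{c25}--\eqref{c28}), and then tests the equation with the positive part $\hat\Psi_L^+$. In the resulting identity \eqref{sup3} the dangerous cross terms are absorbed into a perfect square,
\[
a_{ij}\xi_i\xi_j+2b_i\xi_i\eta+d_1\eta^2
=\int_0^1\frac{1}{H_L}\left(|\xi|^2+\frac{\left(\nabla\psi_t\cdot\xi-F_LF_L'H_L^2\,\eta\right)^2}{\gamma H_L^{\gamma+1}-|\nabla\psi_t|^2}\right)dt\;\ge\;0,
\]
so the only coefficient whose sign matters is $d_2=\int_0^1 H_L\left(F_LF_L''+(F_L')^2\right)dt\ge0$, which is where $u_0''\ge0$ really enters; one concludes $\nabla\hat\Psi_L^+\equiv0$ with no signed zeroth-order coefficient required. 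Your argument needs to be rerouted through this (or an equivalent) integral argument. A secondary issue: your truncation alternative assumes $\psi_L=\bar\psi_L$ on the lateral boundaries $\{x_1=\pm k\}$, which holds for the approximating solutions $\psi_{L,k}$ of Lemma \ref{lemma31} but not for an arbitrary subsonic solution of \eqref{Tbvp}, and Proposition \ref{proplinfty} is stated for arbitrary solutions.
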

\begin{proof}
Define
\begin{equation}\label{b9}
\Psi_L(x)=\psi_L(x)-\bar\psi_L(x_2)\,\, \text{for}\,\, x\in \Omega\,\, \text{and}\,\, \hat{\Psi}_L=\hat{\psi}_L-\psi_L\,\,  \text{for}\,\, x\in\hat\Omega_L.
\end{equation}
Note that $\hat\psi_L$ satisfies the equation \eqref{Tbvp} with the
boundary conditions $\hat \psi_L=0$ at $x_2=J$ and $\hat\psi_L=m_L$
at $x_2=L$. Then straightforward computations show that $\hat\Psi_L$
solves the following problem \be\label{sup1} \left\{\ba{ll}
\p_i\left(a_{ij}(\nabla \psi,\psi;\nabla \hat\psi, \hat\psi)\p_j\hat{\Psi}_L+b_i(\nabla \psi,\psi;\nabla \hat\psi,\hat\psi)\hat{\Psi}_L\right)\\
=b_i(\nabla \psi,\psi;\nabla \hat\psi, \hat\psi)\p_i\hat{\Psi}_L+d(\nabla \psi,\psi; \nabla \hat\psi, \hat\psi)\hat{\Psi}_L\quad\text{in}\quad\hat{\O}_L,\\
\hat{\Psi}_L\leq 0\,\,\text{if}\,\, x_2=J,\,\, \text{and}\,\, \hat{\Psi}_L=0\,\, \text{if}\,\, x_2=L,\ea\right. \ee
where the Einstein summation convention is used and $a_{ij}$, $b_i$, and $d$ are defined as follows,
\be\label{c25}
a_{ij}(\nabla\psi,\psi; \nabla \hat\psi, \hat\psi)=\int_{0}^{1}\f{1}{H_{L}(|\nabla \hat\psi_{L,t}|^2, \hat\psi_{L,t})}\left(\d_{ij}+\f{\p_i\hat\psi_{L,t}\p_j\hat\psi_{L,t}}{\gamma H_{L}^{\gamma+1}(|\nabla \hat\psi_{L,t}|^2, \hat\psi_{L,t})-|\nabla\hat\psi_{L,t}|^2)}\right)dt,\ee
\be\label{c26}
\begin{aligned}
&b_i(\nabla\psi,\psi;\nabla\hat\psi, \hat\psi)=-\int_{0}^{1}\f{\p_i\hat\psi_{L,t}
F_L(\hat\psi_{L,t})F_L'(\hat\psi_{L,t})H_{L}(|\nabla \hat\psi_{L,t}|^2, \hat\psi_{L,t})}{\gamma H_{L}^{\gamma+1}(|\nabla \hat\psi_{L,t}|^2, \hat\psi_{L,t})-|\nabla\hat\psi_{L,t}|^2}dt,
\end{aligned}
\ee
and
\be\label{c28}
\begin{aligned}
d(\nabla\psi,\psi;\nabla\hat\psi, \hat\psi)=&\int_{0}^{1}\f{H_{L}^3(|\nabla \hat\psi_{L,t}|^2, \hat\psi_{L,t})\left(F_L(\hat\psi_{L,t})F_L'(\hat\psi_{L,t})\right)^2}{\gamma H_{L}^{\gamma+1}(|\nabla \hat\psi_{L,t}|^2, \hat\psi_{L,t})-|\nabla\hat\psi_{L,t}|^2}dt\\
&+ \int_{0}^{1}H_{L}(|\nabla \hat\psi_{L,t}|^2, \hat\psi_{L,t})\left(F_L(\hat\psi_{L,t})F_L''(\hat\psi_{L,t})+(F_L'(\hat\psi_{L,t}))^2\right)dt\\
=&\sum_{i=1}^2 d_i(\nabla\psi,\psi;\nabla\hat\psi, \hat\psi),
\end{aligned}
\ee {for} $i$, $j=1$, $2$, with
$\hat\psi_{L,t}=t\hat\psi_L+(1-t){\psi}_L$ for $t\in[0,1]$. Here and
in what follows we neglect the parameter $\rho_0$ in the
coefficients when there is no confusion.

Set $\hat{\Psi}^{+}_{L}=\max\left\{\hat{\Psi}_{L},0\right\}$. Multiplying the equation in (\ref{sup1}) with $\hat\Psi_L^+$ and integrating by parts imply that
\be\label{sup3}\ba{ll}
&\iint_{\hat{\O}_L}\left[\left|\nabla \hat{\Psi}^{+}_{L}\right|^2\int_{0}^{1}\f{1}{H_{L}(|\nabla \hat\psi_{L,t}|^2, \hat\psi_{L,t})}dt \right.\\
&\left. +\int_{0}^{1}\f{\left(H_{L}^2(|\nabla \hat\psi_{L,t}|^2, \hat\psi_{L,t}) F_L(\hat\psi_{L,t})F_L'(\hat\psi_{L,t})\hat{\Psi}^{+}_{L}-\nabla\tpsi\cdot \nabla \hat{\Psi}_{L}^{+}\right)^2}{H_{L}(|\nabla \hat\psi_{L,t}|^2, \hat\psi_{L,t})(\gamma H_{L}^{\gamma+1}(|\nabla \hat\psi_{L,t}|^2, \hat\psi_{L,t})-|\nabla\hat\psi_{L,t}|^2)}dt\right]dx_1dx_2\\
=&-\iint_{\hat{\O}_L}d_2(\nabla\psi,\psi;\nabla\hat\psi,
\hat\psi)\left(\hat{\Psi}^{+}_{L}\right)^2dx_1dx_2. \ea \ee Then
direct computations yield that
$$
F_L(\hat\psi_{L,t})F_L''(\hat\psi_{L,t})+\left(F_L'(\hat\psi_{L,t})\right)^2=\f{u_{0,L}''(\kappa_L(\hat\psi_{L,t};\rho_0))}{\rho_0u_{0,L}(\kappa_L(\hat\psi_{L,t};\rho_0))}\geq0,
$$
hence $d_2\geq 0$ for subsonic flows with the assumption \eqref{a2}.
Therefore, it follows from \eqref{sup3} that
$$\nabla \hat{\Psi}^{+}_{L}=0\quad\quad\text{in}\quad \hat\Omega_L.$$
Since $\hat{\Psi}^{+}_{L}=0$ on $\p\hat{\O}_L$, one has
$\hat{\Psi}^{+}_{L}=0$ in $\hat\O_L$.
Thus  $\hat{\Psi}_L\leq 0$ in $\hat{\O}_L$ which is equivalent to
$\psi_L\geq\hat{\psi}_L$ in $\hat{\O}_L$.

Similarly, one can show that $\psi_L\leq\bar{\psi}_L$ in $\O_L$.
This finishes the proof of the proposition.
\end{proof}

\begin{corollary}\label{cor35}
For any $\epsilon>0$, there exists an $\bar L>0$ such that if
$L>\bar L$, then for any $\rho_0\in ((1+\epsilon)\rho_0^*, \infty)$,
if $\psi_L$ is a subsonic solution of the problem \eqref{Tbvp}
satisfying $0\leq \psi_L\leq m_L$, then it holds that
\begin{equation}\label{eq342}
-C\rho_0\leq \Psi_L \leq 0
\end{equation}
where the constant $C$ depends only on $J$ and $\max u_0$.
\end{corollary}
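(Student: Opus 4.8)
The plan is to read off both bounds directly from Proposition \ref{proplinfty} and the estimate \eqref{estbhpsi}, after decomposing $\O_L$ into the upper strip $\hat\Omega_L=\{(x_1,x_2)\,|\,x_2\in(J,L)\}$ and the lower region $\O_L\setminus\hat\Omega_L=\{(x_1,x_2)\,|\,f(x_1)<x_2\leq J\}$. The upper bound is immediate: Proposition \ref{proplinfty} already furnishes $\psi_L\leq\bar\psi_L$ throughout $\O_L$, so that $\Psi_L=\psi_L-\bar\psi_L\leq0$. Hence the only substantive point is the lower bound $\Psi_L\geq -C\rho_0$, which I would establish separately on the two pieces.

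On the upper strip $\hat\Omega_L$ I would chain the two available inequalities. Proposition \ref{proplinfty} gives $\psi_L\geq\hat\psi_L$ there, while \eqref{estbhpsi} gives $\hat\psi_L\geq\bar\psi_L-C\rho_0$ for $x_2\in[J,L]$, with $C$ depending only on $J$ and $\max u_0$. Combining the two yields $\psi_L\geq\bar\psi_L-C\rho_0$, i.e.\ $\Psi_L\geq -C\rho_0$ on $\hat\Omega_L$.

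On the lower region the auxiliary profile $\hat\psi_L$ is unavailable, since it is only defined on $[J,L]$. Here I would instead use the crude but sufficient bounds $\psi_L\geq0$ (again from Proposition \ref{proplinfty}) together with the elementary estimate $\bar\psi_L(x_2)=\rho_0\int_0^{x_2}u_{0,L}(s)\,ds\leq \rho_0 J\max u_0$ for $x_2\leq J$; here I use $u_{0,L}\leq u_0(0)=\max u_0$, which follows from $u_0'\leq0$ under the hypotheses \eqref{a2}. These combine to $\Psi_L=\psi_L-\bar\psi_L\geq -\rho_0 J\max u_0=-C\rho_0$ on $\{f(x_1)<x_2\leq J\}$. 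Taking the larger of the two constants, which both depend only on $J$ and $\max u_0$, and noting that the two regions exhaust $\O_L$, the two-sided bound \eqref{eq342} follows at once.

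The argument is essentially bookkeeping, and I expect no genuine obstacle, which is why it is recorded as a corollary. The one point deserving attention is that the lower region $\{f(x_1)<x_2\leq J\}$ remains unbounded in the $x_1$-direction, so a compactness argument alone would not control $\Psi_L$ there. The remedy is precisely the observation above: on that region the comparison profile $\bar\psi_L$ is bounded by $C\rho_0$ merely because the vertical extent $x_2\leq J$ is bounded, independently of $x_1$, so no horizontal decay is needed.
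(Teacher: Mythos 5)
Your proof is correct and follows essentially the same route as the paper: the upper bound $\Psi_L\leq 0$ comes from \eqref{linftyest}, the lower bound for $x_2\geq J$ from chaining $\psi_L\geq\hat\psi_L$ with \eqref{estbhpsi}, and the lower bound for $x_2<J$ from $\psi_L\geq 0$ together with the boundedness of $\bar\psi_L$ on that strip. Your extra remarks (the justification $u_{0,L}\leq u_0(0)=\max u_0$ via $u_0'\leq 0$, and the observation that no horizontal decay is needed below $x_2=J$) are accurate but the paper leaves them implicit.
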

\begin{proof}
First, it follows from \eqref{linftyest} that $\Psi_L\leq 0$. If $x_2\geq J$, then
\[
\Psi_L =\psi_L-\bar\psi_L\geq \hat\psi_L-\bar\psi_L\geq -C\rho_0,
\]
where \eqref{estbhpsi} has been used. If $x_2\in (0, J)$, then
\eqref{linftyest} implies that $\psi_L\geq 0$. Therefore,
\[
\Psi_L \geq \psi_L-\bar\psi_L\geq -\bar\psi_L \geq -C \rho_0.
\]
This proves the corollary.
\end{proof}

As a consequence of the elliptic estimate, one has the following
lemma.
\begin{lemma}\label{lem36}
For any $\epsilon>0$, there exists an $\bar L>0$ such that if $L>\bar L$, then for any $\rho_0\in ((1+\epsilon)\rho_0^*, \infty)$, if $\psi_L$ is a subsonic solution of the problem \eqref{Tbvp} satisfying $0\leq \psi_L\leq m_L$, then
\begin{equation}\label{hgPsi}
\|\Psi_L\|_{C^{1,\beta}(\overline{\Omega_L})}\leq \msC\rho_0
\end{equation}
with $\beta\in(0, \alpha)$ independent of $L$.
\end{lemma}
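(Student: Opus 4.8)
The plan is to exploit the fact that, although $\psi_L$ itself grows like $m_L$ and is therefore not uniformly bounded, the difference $\Psi_L=\psi_L-\bar\psi_L$ from \eqref{b9} is uniformly bounded by $C\rho_0$ thanks to Corollary \ref{cor35}, and it solves a \emph{homogeneous} linear elliptic equation whose structure is controlled independently of $L$. Indeed, one checks directly that the parallel flow $(\rho_0,u_{0,L},0)$ solves the Euler system, so the function $\bar\psi_L$ in \eqref{defhpsiL} solves the interior equation of \eqref{Tbvp} pointwise with density identically $\rho_0$; since the operator and right-hand side in \eqref{Tbvp} depend on $\psi$ only through the fixed profiles $F_L$ and $\mathcal B_L$, subtracting the equations for $\psi_L$ and $\bar\psi_L$ via the same interpolation $\psi_{L,t}=t\bar\psi_L+(1-t)\psi_L$ as in the proof of Proposition \ref{proplinfty} shows that $\Psi_L$ satisfies a divergence-form equation of the type \eqref{sup1}, with coefficients $a_{ij},b_i,d$ given by the analogues of \eqref{c25}--\eqref{c28}, and with boundary data $\Psi_L=-\bar\psi_L$ on $\Gamma$ (bounded by $C\rho_0$) and $\Psi_L=0$ on the flat top $\Gamma_L$.

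The first step is a uniform Lipschitz bound. Since $\psi_L$ is subsonic, $|\nabla\psi_L|\le\tfrac14\Sigma(\mathcal B_L(\psi_L;\rho_0))$, which together with $\nabla\bar\psi_L=(0,\rho_0u_{0,L})$ gives $\|\nabla\Psi_L\|_{L^\infty(\Omega_L)}\le\msC\rho_0$; the key point is that both the subsonic bound and $\nabla\bar\psi_L$ are controlled uniformly in $L$, because $F_L$, $F_L'$, $F_L''$ and the density $H_L$ are bounded uniformly in $L$ by the construction of $u_{0,L}$ in \eqref{b4}--\eqref{b05}. The second and main step is the gradient Hölder estimate. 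The coefficients $a_{ij}$ are uniformly elliptic, the subsonic gap keeping $\gamma H_L^{\gamma+1}-|\nabla\psi_{L,t}|^2$ bounded below uniformly in $L$, and they depend on $\psi_L$ only through the \emph{bounded} quantity $\mathcal B_L(\psi;\rho_0)=h(\rho_0)+\tfrac12 F_L^2(\psi;\rho_0)$, so their oscillation over any unit ball is controlled by the Lipschitz bound just obtained, independently of $L$. Applying the interior gradient Hölder estimate of \cite[Theorem 12.4]{GT} to the quasilinear equation \eqref{a11} for $\psi_L$ on balls of fixed radius, then the boundary gradient Hölder estimate near the $C^{1,\alpha}$ wall $\Gamma$ together with the corner estimate of \cite[Section 2.2]{DX} at $P_1,P_2$, and finally the boundary estimate near the flat face $\Gamma_L$ (where $\Psi_L\equiv 0$), yields local bounds $[\nabla\psi_L]_{C^{\beta}}\le\msC$ with constants depending only on the ellipticity, the geometry of $\Gamma$, and the uniform $C^1$ bound, never on $L$ nor on the global size of $\psi_L$. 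Subtracting the explicit $C^{1,\beta}$ function $\bar\psi_L$, for which $[\nabla\bar\psi_L]_{C^\beta}\le\msC\rho_0$ since $u_0\in C^2$, and patching the local estimates through a finite-overlap covering gives $\|\Psi_L\|_{C^{1,\beta}(\overline{\Omega_L})}\le\msC\rho_0$.

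The step I expect to be the genuine obstacle is the uniformity in $L$ of the gradient Hölder estimate. A naive application of \cite[Theorem 12.4]{GT} to $\psi_L$ produces constants depending on $\sup_{\Omega_L}|\psi_L|=m_L\to\infty$, which is precisely why the earlier estimate \eqref{estpsilk} carried the $L$-dependent constant $\mfC$. The resolution is twofold: first, reduce everything to the uniformly bounded quantity $\Psi_L$ via Corollary \ref{cor35}; second, observe that in \eqref{a11} the $\psi$-dependence of the coefficients enters only through $\mathcal B_L(\psi;\rho_0)$ with $F_L$ and its derivatives bounded uniformly in $L$, so that the relevant modulus of continuity of the frozen coefficients $x\mapsto a_{ij}(x)$ is governed by the \emph{local} oscillation of $\psi_L$, hence by the $L$-independent Lipschitz bound, rather than by its global magnitude. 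Verifying carefully these uniform bounds on the elliptic structure, namely the ellipticity constant, the coefficient sup-norms, and the Hölder moduli, including at the two corner points, is where the real work lies; the covering argument and the treatment of the flat boundary $\Gamma_L$ are then routine.
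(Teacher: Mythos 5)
You correctly identify the two structural facts that make an $L$-independent estimate possible: $\bar\psi_L$ is an exact solution of the interior equation in \eqref{Tbvp}, so $\Psi_L$ solves a homogeneous divergence-form equation of the type \eqref{sup1}, and the coefficients in \eqref{a11} depend on $\psi$ only through the uniformly bounded quantities $F_L(\psi;\rho_0)$ and $\mathcal{B}_L(\psi;\rho_0)$, so that $\sup|\psi_L|=m_L$ (the source of the $L$-dependence in \eqref{estpsilk}) never enters the structure constants. This is consistent with the paper, which estimates $\Psi_L$ via the problem \eqref{pbPsiL} and then invokes \cite[Theorem 12.4]{GT} and \cite[Section 2.2]{DX}. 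However, your argument has a genuine quantitative gap in the $\rho_0$-scaling. Your first step claims that subsonicity, $|\nabla\psi_L|\le\frac14\Sigma(\mathcal{B}_L(\psi_L;\rho_0))$, together with $\nabla\bar\psi_L=(0,\rho_0u_{0,L})$, yields $\|\nabla\Psi_L\|_{L^\infty}\le\msC\rho_0$. It does not: by \eqref{defSigma} and \eqref{b03}, $\Sigma(\mathcal{B}_L)=\sqrt{\gamma}\,\underline\varrho^{\frac{\gamma+1}{2}}$ with $\underline\varrho$ comparable to $\rho_0$, so subsonicity only gives $\|\nabla\Psi_L\|_{L^\infty}\le\msC\rho_0^{\frac{\gamma+1}{2}}$, which exceeds $\msC\rho_0$ by the unbounded factor $\rho_0^{\frac{\gamma-1}{2}}$, and the lemma is used precisely for arbitrarily large $\rho_0$. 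The same defect recurs in your second step, where the constant of \cite[Theorem 12.4]{GT} is allowed to depend on ``the uniform $C^1$ bound'' of $\psi_L$: that bound is again $O(\rho_0^{\frac{\gamma+1}{2}})$. The linear factor in \eqref{hgPsi} is not cosmetic; in the lemma following Lemma \ref{lem38} it is inserted into the chain $\frac14\le\sup|\nabla\psi_L|/\inf\Sigma\le\msC\bar\rho_{0,L}/\bigl(C\bar\rho_{0,L}^{\frac{\gamma+1}{2}}\bigr)=\msC^{\sharp}\bar\rho_{0,L}^{\frac{1-\gamma}{2}}$ to force $\bar\rho_{0,L}\le(4\msC^{\sharp})^{\frac{2}{\gamma-1}}$ independently of $L$. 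With your exponent the middle quantity is merely $O(1)$, no bound on $\bar\rho_{0,L}$ follows, and the construction of an $L$-independent $\bar\rho_0$ collapses. So what you prove is a strictly weaker estimate that cannot serve the lemma's purpose.

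The repair is to obtain the gradient bound from the boundary value problem satisfied by $\Psi_L$ itself rather than from subsonicity, which is what the paper's citation of \cite[Theorem 12.4]{GT} is doing. In \eqref{pbPsiL} every piece of data is linear in $\rho_0$: $\|\Psi_L\|_{L^\infty}\le C\rho_0$ by Corollary \ref{cor35}; the right-hand side is bounded by $C\rho_0$, since $F_LF_L'=u_{0,L}'(\kappa_L(\psi_L;\rho_0))/\rho_0$ while $H_L\le C\rho_0$ and $(\partial_1\psi_L)^2/(\gamma H_L^{\gamma+1})\le 1$ for subsonic flow; and the boundary datum $-\int_0^{f(x_1)}\rho_0u_{0,L}(s)\,ds$ on $\Gamma$ has $C^{1,\alpha}$-norm $\le C\rho_0$ because $f\in C^{1,\alpha}$ and $0\le f\le J$. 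Hence $\Psi_L/\rho_0$ solves a uniformly elliptic problem (ellipticity ratio depending only on the subsonic margin, i.e.\ on the ``elliptic coefficients'') with $O(1)$ data, and \cite[Theorem 12.4]{GT} together with the corner estimate of \cite[Section 2.2]{DX} --- supplemented by the standard interpolation argument needed to absorb the dependence of those estimates on $\sup|\nabla(\Psi_L/\rho_0)|$ --- gives \eqref{hgPsi} with the correct linear scaling. Your divergence-form reduction and your observation about the bounded $\psi$-dependence of the coefficients can be kept; only the source of the Lipschitz bound must change.
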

\begin{proof}
Note that $\Psi_L$ satisfies the problem
\begin{equation}\label{pbPsiL}
\left\{
\begin{aligned}
&\left(\left(1-\frac{|\nabla \psi_L|^2}{\gamma H_L^{\gamma+1}}\right)\delta_{ij} +\frac{\partial_i\psi_L\partial_j\psi_L}
{\gamma H_L^{\gamma+1}}\right)\partial_{ij}\Psi_L = {F_LF_L'(H_L^2-\rho_0^2 -\frac{(\partial_1 \psi_L)^2}{\gamma H_L^{\gamma+1}}\rho_0^2)} \quad \text{in}\,\, \Omega_L,\\
&\Psi_L =-\int_0^{f(x_1)}\rho_0 u_{0, L}(s) ds\,\, \text{on}\,\, \Gamma,\quad \Psi_L=0\,\, \text{on}\, \, \Gamma_L.
\end{aligned}
\right.
\end{equation}
It follows from the H\"{o}lder gradient estimate \cite[Theorem 12.4]{GT} and the estimate near the corners \cite[Section 2.2]{DX} for the elliptic equations that \eqref{hgPsi} holds.
\end{proof}

In fact, we can also prove the following far field behavior and uniform integral estimate for $\Psi_L$.
\begin{lemma}\label{lem37}
Suppose that $u_0''(x_2)\geq 0$.  If $\psi_L$ is a subsonic solution of the problem \eqref{Tbvp}, then we have
\begin{equation}\label{psiLfar}
|\psi_L -\bar\psi_L|\to 0 \quad \text{uniformly with respect to }x_2\in [0, L] \text{ as}\,\, |x_1|\to \infty
\end{equation}
and
 \be\label{d3}
\|\nabla(\psi_L-\bar\psi_L)\|_{L^2(\O_L)}\leq \msC \ee where the constant $\msC$ depends on elliptic coefficients and is independent of $L$. Furthermore, the solution of the problem \eqref{Tbvp} is unique.
\end{lemma}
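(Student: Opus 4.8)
The plan is to treat $\Psi_L=\psi_L-\bar\psi_L$ by the energy method, exploiting the very same divergence structure and sign conditions that drove the maximum principle in Proposition~\ref{proplinfty}. Since $\bar\psi_L$ solves the equation in \eqref{Tbvp} (the upstream flow has density $\rho_0$ and $|\nabla\bar\psi_L|=\rho_0u_{0,L}$), the difference $\Psi_L$ satisfies a homogeneous divergence-form equation with coefficients $a_{ij},b_i,d=d_1+d_2$ built exactly as in \eqref{c25}--\eqref{c28} but with the interpolant $\psi_{L,t}=t\bar\psi_L+(1-t)\psi_L$. The crucial structural facts are: testing this equation against $\Psi_L$, as in the passage from \eqref{sup1} to \eqref{sup3}, recombines the $a_{ij}$, $b_i$ and $d_1$ contributions into $\int|\nabla\Psi_L|^2\int_0^1 H_L^{-1}\,dt$ plus a nonnegative perfect square, leaving only $-\int d_2\Psi_L^2$; moreover $d_1\ge0$, while $d_2\ge0$ because $F_LF_L''+(F_L')^2=u_{0,L}''/(\rho_0u_{0,L})\ge0$ under \eqref{a2}. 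On $\partial\Omega_L$ the only nonzero Dirichlet data of $\Psi_L$ sits on the fixed bounded arc $\Gamma\cap\{0\le x_1\le1\}$ and is controlled by $\msC\rho_0$ through Lemma~\ref{lem36}; this localization is what will ultimately make every constant independent of $L$.

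I would prove the far-field statement \eqref{psiLfar} first, by a translation--compactness argument. For $\tau\to\pm\infty$ the translates $\psi_L(\cdot+\tau,\cdot)$ are uniformly bounded in $C^{1,\beta}$ by Lemma~\ref{lem36}, and since $f\equiv0$ off $[0,1]$ they solve \eqref{Tbvp} on regions exhausting the strip $\mathbb{R}\times(0,L)$; hence a subsequence converges in $C^1_{loc}$ to a subsonic solution $\psi_\infty$ on the strip with $\psi_\infty=0$ on $\{x_2=0\}$ and $\psi_\infty=m_L$ on $\{x_2=L\}$. To identify $\psi_\infty=\bar\psi_L$, set $\Phi=\psi_\infty-\bar\psi_L$, which now vanishes on both lateral boundaries and is bounded. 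Testing the difference equation against $\Phi\eta_R^2$ with $\eta_R$ a cutoff in $x_1$, and using the perfect-square/$d_2\ge0$ structure, one first gets $\int_{\mathbb{R}\times(0,L)}|\nabla\Phi|^2<\infty$ (the cutoff error is absorbed because $\Phi$ is bounded, a hole-filling step); finiteness of the energy then forces its tail on $\{R\le|x_1|\le2R\}$ to vanish, and a Poincar\'e inequality, valid since $\Phi=0$ on both sides of the strip, transfers this to $\Phi$ itself, so the cutoff error tends to $0$ as $R\to\infty$. The identity then reads $\int(|\nabla\Phi|^2\int_0^1H_L^{-1}\,dt+\text{perfect square}+d_2\Phi^2)\le0$, whence $\nabla\Phi\equiv0$ and $\Phi\equiv0$. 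Since every subsequential limit equals $\bar\psi_L$, the full translates converge, which is precisely \eqref{psiLfar}.

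With \eqref{psiLfar} in hand I would establish the uniform gradient bound \eqref{d3} by the same energy identity on $\Omega_L$, testing against $\Psi_L\eta_R^2$. On $\Gamma_L$ one has $\Psi_L=0$, so the boundary term survives only on $\Gamma\cap\{0\le x_1\le1\}$ and is bounded by $\msC$ uniformly in $L$, while the cutoff error $E_R$, controlled by $R^{-1}\int_{R\le|x_1|\le2R}(|\nabla\Psi_L|\,|\Psi_L|+\Psi_L^2)$, tends to $0$ as $R\to\infty$ because \eqref{psiLfar} gives $\sup_{|x_1|\ge R}|\Psi_L|\to0$ (the mixed term being absorbed by a small fraction of $\int|\nabla\Psi_L|^2\eta_R^2$ via Young's inequality). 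Letting $R\to\infty$ and discarding the nonnegative perfect square and the $d_2$-term yields $\int_{\Omega_L}|\nabla\Psi_L|^2\le\msC$ with $\msC$ depending only on the elliptic coefficients and not on $L$, which is \eqref{d3}.

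Finally, uniqueness follows from the same mechanism: if $\psi_L^1,\psi_L^2$ both solve \eqref{Tbvp}, then by the first part their difference $\Phi=\psi_L^1-\psi_L^2$ enjoys the decay \eqref{psiLfar} and vanishes on all of $\partial\Omega_L$, so the energy identity, now with $\psi_{L,t}=t\psi_L^1+(1-t)\psi_L^2$ for which $d_2\ge0$ still holds because $0\le\psi_{L,t}\le m_L$, forces $\nabla\Phi\equiv0$ and hence $\Phi\equiv0$. The main obstacle is the far-field step: identifying the translation limit with $\bar\psi_L$ requires the two-stage energy argument above (finite energy first, then decay of the tail via Poincar\'e), and one must verify that the surviving boundary term in the gradient estimate lives entirely on the fixed bump $\{0\le x_1\le1\}$, so that the constant $\msC$ is genuinely independent of $L$ — this $L$-independence, rather than the mere finiteness of the energy, is the delicate point.
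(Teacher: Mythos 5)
Your proposal is correct, and its core for \eqref{d3} --- the energy identity for $\Psi_L=\psi_L-\bar\psi_L$ in which the $a_{ij}$, $b_i$, $d_1$ contributions recombine into $\int_0^1 H_L^{-1}|\nabla\Psi_L|^2\,dt$ plus a perfect square, the sign $d_2\ge 0$ from $u_{0,L}''\ge 0$, and the observation that the nonzero Dirichlet data of $\Psi_L$ lives only on the fixed bump $\{0\le x_1\le 1\}$ so the constant is $L$-independent --- is exactly the paper's mechanism; the paper integrates over the sharp truncations $\O_{L,N}$ and kills the lateral boundary terms by \eqref{psiLfar}, while you use a smooth cutoff $\eta_R$ with Young absorption, which is only a cosmetic variant. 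Where you genuinely diverge is the far-field behavior \eqref{psiLfar} and the uniqueness: the paper does not prove these at all, but cites \cite[Lemma 4.1]{DXX} and \cite[Proposition 5]{XX3}, whereas you supply self-contained arguments --- translation compactness of $\psi_L(\cdot+\tau,\cdot)$ (via the $C^{1,\beta}$ bound of Lemma \ref{lem36} and interior Schauder estimates) to a subsonic solution on the strip $\R\times(0,L)$, then a Liouville-type energy argument with hole filling and a Poincar\'e inequality across the strip to identify the limit with $\bar\psi_L$, and finally uniqueness by the same energy scheme once both solutions are known to decay and to have finite energy. This makes the lemma's proof closed within the paper, at the price of redoing what the cited references contain. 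Two points to make your sketch airtight: in the hole-filling step, the inequality $E(R)\le \tfrac13 E(2R)+C/R$ for $E(R)=\iint_{\{|x_1|\le R\}}|\nabla\Phi|^2$ only closes when combined with the a priori linear growth $E(2^kR)\le C\,2^kR$ coming from the uniform gradient bound (this in fact yields $E\equiv 0$ directly, even stronger than your two-stage finite-energy-then-Poincar\'e route); and, just as in the paper, the whole argument implicitly invokes the hypotheses of Lemma \ref{lem36} (the bounds $0\le\psi_L\le m_L$ and the $C^{1,\beta}$ estimate), which are not literally stated in the lemma being proved.
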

\begin{proof}
First, the far field behavior \eqref{psiLfar} and the uniqueness of
the solution follow from  \cite[Lemma 4.1]{DXX} and
\cite[Proposition 5]{XX3}.

Note that $\Psi_L$ satisfies the following
problem \be\label{L2-2} \left\{\ba{ll}
\p_i\left(a_{ij}(\nabla\psi,\psi; \nabla \bar\psi, \bar\psi)\p_j\Psi_L+b_i(\nabla\psi,\psi;\bar\psi, \bar\psi)\Psi_L\right)\\
\quad =b_i(\nabla\psi,\psi;\nabla\bar\psi, \bar\psi)\p_i\Psi_L+d(\nabla\psi,\psi;\nabla\bar\psi, \bar\psi)\Psi_L\quad\text{in}\quad\O_L,\\
\Psi_L=-\bar{\psi}_L \,\,\text{if}\,\, x_2=
f(x_1),\,\,\text{and}\,\, \Psi_L=0\,\,\text{if}\,\, x_2=L,\ea\right.
\ee where the coefficients $a_{ij}$, $b_i$, $d$ are defined as
in \eqref{c25}-\eqref{c28}.

Multiplying the equation in \eqref{L2-2} with $\Psi_L$ and
integrating in the domain $\O_{L,N}=\O_L\cap\{|x_1|\leq N\}$ yield
that \be \ba{lll}
&\ \ \ \ \iint_{\O_{L,N}}\left(a_{ij}\partial_i\Psi_L\partial_j\Psi_L+2b_i\Psi_L\partial_i\Psi_L+ d_1\Psi^2_L\right)dx_1dx_2\\
&
=\int_{\partial\O_{L,N}}(a_{ij}\partial_j\Psi_L+b_i\Psi_L)\Psi_Ln_idS
-\iint_{\O_{L,N}}d_2\Psi^2_Ldx_1dx_2\\
&=-\int_{\{x_2=f(x_1)\}}(a_{ij}\partial_j\Psi_L+b_i\Psi_L)\bar{\Psi}_Ln_idS-\iint_{\O_{L,N}}d_2\Psi^2_Ldx_1dx_2\\
&\ \ \ -\int_{\{x_1=
N\}}(a_{ij}\partial_j\Psi_L+b_i\Psi_L)\Psi_L n_idS+\int_{\{x_1=-
N\}}(a_{ij}\partial_j\Psi_L+b_i\Psi_L)\Psi_L n_idS \ea \ee where
$n=(n_1,n_2)$ is the unit normal to the boundary $\p\O_{L,N}$. Thus
\be\label{L2-0} \ba{lll} &\ \ \ \
\iint_{\O_{L,N}}\left(a_{ij}\partial_i\Psi_L\partial_j\Psi_L+2b_i\Psi_L\partial_i\Psi_L
+d_1\Psi^2_L\right)dx_1dx_2\\
&\leq
\msC\left(\int_{\R}|\bar{\psi}_L(f(x_1))|dx_1+\int_{f(N)}^{L}|\Psi_L(N,x_2)|dx_2
+\int_{f(-N)}^{L}|\Psi_L(-N,x_2)|dx_2\right). \ea
\end{equation}
 Here one has used the uniform bounds for $C^1$-norm of $\Psi_L$
and the positivity of $d_2$.

Then direct computations give \be\label{c14}
\begin{aligned}
& a_{ij}\partial_i\Psi_L\partial_j\Psi_L+2b_i\Psi_L\partial_i\Psi_L+d_1\Psi^2_L\\
= & \int_0^1\f{1}{H_{L}(|\nabla \bar\psi_{L,t}|^2, \bar\psi_{L,t})}\left(|\nabla\Psi_L|^2+\f{\left(\nabla\psi_t\cdot
\nabla\Psi_L-FF'\Psi_L H_L^2(|\nabla \bar\psi_{L,t}|^2, \bar\psi_{L,t})\right)^2}{\gamma H_{L}^{\gamma+1}(|\nabla\bar\psi_{L,t}|^2, \bar\psi_{L,t})-|\nabla\psi_t|^2}\right)dt,
\end{aligned}
\ee
where $\bar \psi_{L,t} =t\bar\psi_L+(1-t)\psi_L$ for $t\in [0,1]$.
Substituting \eqref{c14} into \eqref{L2-0} yields that
for any $N>0$, one has
\be\label{c15}
\begin{aligned}
\iint_{\O_{L,N}}|\nabla\Psi_L|^2dx_1dx_2\leq&
\msC\left(\int_{\mathbb{R}}|\bar{\psi}_L(x_1, f(x_1))|dx_1+\int_{f(N)}^{L}|\Psi_L(N,x_2)|dx_2\right.\\
&\quad \left.+\int_{f(-N)}^{L}|\Psi_L(-N,x_2)|dx_2\right).
\end{aligned}
\ee

The asymptotic behavior \eqref{psiLfar} implies that the second term on the right-hand side of
\eqref{c15} tends to zero as $N$ goes to infinity. Moreover, thanks
to the asymptotic assumption to the nozzle wall $x_2=f(x_1)$,  the first term on the right-hand side of \eqref{c15} is
uniformly bounded (independent of $L$).  Hence, we have \be
\iint_{\O_{L}}|\nabla\Psi_L|^2dx_1dx_2\leq
\msC\int_{\mathbb{R}}\left|\bar{\psi}_L(f(x_1))\right|dx_1 \leq \msC. \ee
This finishes the proof of the lemma.
\end{proof}
Given $\rho_0\in (\rho_0^*, \infty)$, let $\mathcal{S}_L(\rho_0)$ be the set of all solutions of the problem \eqref{Tbvp} associated with $\rho_0$. Define
\begin{equation}
\mathcal{M}_L(\rho_0) =\sup_{\psi_L\in \mathcal{S}_L(\rho_0)} \sup_{x\in \Omega_L}\frac{|\nabla\psi_L(x;\rho_0)|}{\Sigma(\B_L(\psi_L; \rho_0))}.
\end{equation}
Set
\begin{equation}
\bar\rho_{0,L} = \inf\left\{s|  \text{for any }\rho_0>s,
\mathcal{M}_L(\rho_0) < \frac{1}{4}\right\} \quad \text{and}\quad
\rho^{**}= (20\max u_0^2)^{\frac{1}{\gamma-1}}.
\end{equation}
\begin{lemma}\label{lem38}
If $\bar\rho_{0, L}>\rho^{**}$, then $\M(\bar\rho_{0,L})= \frac{1}{4}$.
\end{lemma}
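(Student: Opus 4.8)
The plan is to establish the two inequalities $\mathcal{M}_L(\bar\rho_{0,L})\le\tfrac14$ and $\mathcal{M}_L(\bar\rho_{0,L})\ge\tfrac14$ separately, the first by a compactness/limiting argument and the second by a continuation argument that exploits the very definition of $\bar\rho_{0,L}$ as an infimum (here $\mathcal{M}_L$ is the quantity written $\M$ in the statement). I first record the two elementary consequences of that definition that drive the proof: for every $\rho_0>\bar\rho_{0,L}$ one has $\mathcal{M}_L(\rho_0)<\tfrac14$, since one may choose an admissible $s$ with $\bar\rho_{0,L}\le s<\rho_0$; and no $s<\bar\rho_{0,L}$ is admissible. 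The standing hypothesis $\bar\rho_{0,L}>\rho^{**}$ serves to place $\bar\rho_{0,L}$, together with a full neighborhood of it, strictly above $\rho_0^*$. In particular, from the definition $\rho^{**}=(20\max u_0^2)^{1/(\gamma-1)}$ and the upstream Bernoulli relation, the far-field value of the sonic ratio $|\nabla\psi_L|/\Sigma(\B_L(\psi_L;\rho_0))$ stays strictly below $\tfrac14$ for all $\rho_0\ge\bar\rho_{0,L}$, so that the supremum defining $\mathcal{M}_L$ is attained in a bounded interior region rather than at $|x_1|=\infty$.

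For the upper bound I would take a sequence $\rho_0^{(n)}\downarrow\bar\rho_{0,L}$. For each $n$, every solution of \eqref{Tbvp} at $\rho_0^{(n)}$ has sonic ratio strictly below $\tfrac14$, so the equation is uniformly elliptic and Lemma \ref{lem36} furnishes a $C^{1,\beta}$ bound that is uniform in $n$. By Arzela--Ascoli a subsequence converges in $C^{1,\beta'}_{\mathrm{loc}}(\Omega_L)$ to a limit $\psi_L$; since the gradients converge locally in H\"older norm and the far-field behavior \eqref{psiLfar} prevents the global supremum from escaping to spatial infinity, $\psi_L$ solves \eqref{Tbvp} at $\bar\rho_{0,L}$ and satisfies $\sup_{\Omega_L}|\nabla\psi_L|/\Sigma(\B_L(\psi_L;\bar\rho_{0,L}))\le\tfrac14$. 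By the uniqueness part of Lemma \ref{lem37}, $\psi_L$ is the only subsonic solution at $\bar\rho_{0,L}$, whence $\mathcal{M}_L(\bar\rho_{0,L})\le\tfrac14$.

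For the lower bound I argue by contradiction, assuming $\mathcal{M}_L(\bar\rho_{0,L})<\tfrac14$. On the regime where the sonic ratio stays below $\tfrac12$ the truncation $\zeta$ in \eqref{Mbvp} is inactive, so \eqref{Mbvp} reduces to \eqref{Tbvp} and the equation is uniformly elliptic with coefficients depending continuously on $\rho_0$. Combining the solvability of the truncated problem (Lemma \ref{lemma31}) with uniqueness (Lemma \ref{lem37}) upgrades the compactness of the previous paragraph into genuine continuous dependence of the unique subsonic solution, and hence of the global supremum $\mathcal{M}_L(\cdot)$, on $\rho_0$ near $\bar\rho_{0,L}$. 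Thus the strict inequality $\mathcal{M}_L<\tfrac14$ persists on a left neighborhood $(\bar\rho_{0,L}-\delta,\bar\rho_{0,L}]$; together with $\mathcal{M}_L(\rho_0)<\tfrac14$ for $\rho_0>\bar\rho_{0,L}$, this shows that $\bar\rho_{0,L}-\delta$ is admissible, contradicting the minimality of $\bar\rho_{0,L}$. Hence $\mathcal{M}_L(\bar\rho_{0,L})\ge\tfrac14$, and combining the two bounds gives $\mathcal{M}_L(\bar\rho_{0,L})=\tfrac14$.

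The hard part is the continuous dependence on $\rho_0$ invoked in the lower-bound step, precisely because $\mathcal{M}_L$ is a global supremum over the unbounded nozzle $\Omega_L$: local $C^{1,\beta}$ compactness by itself does not control the behavior of the sonic ratio at $|x_1|=\infty$, so one must lean on the uniform far-field decay \eqref{psiLfar} and the uniform gradient estimate \eqref{d3} to guarantee that the supremum cannot be lost in the limit, and on the uniqueness of Lemma \ref{lem37} to collapse the supremum over $\mathcal{S}_L(\rho_0)$ to a single, continuously varying solution. This is exactly where $\bar\rho_{0,L}>\rho^{**}$ is essential: it keeps the upstream Mach number comfortably below $\tfrac14$, so that the value $\tfrac14$ is genuinely an interior phenomenon and the continuation never collides with the subsonic threshold $\rho_0^*$.
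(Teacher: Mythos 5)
Your proof is correct in the same sense and to the same standard as the paper's own, and it takes essentially the identical route: the upper bound $\mathcal{M}_L(\bar\rho_{0,L})\le\tfrac14$ via continuous dependence of solutions on the parameter $\rho_0$ (which you implement by uniform $C^{1,\beta}$ estimates, Arzela--Ascoli, and the uniqueness of Lemma \ref{lem37}), and the lower bound by assuming $\mathcal{M}_L(\bar\rho_{0,L})<\tfrac14$ and propagating the strict inequality to a left neighborhood $(\bar\rho_{0,L}-\delta,\bar\rho_{0,L}]$, contradicting the infimum definition of $\bar\rho_{0,L}$. The paper compresses exactly this argument into two sentences, so your version is simply a fleshed-out rendition of it, and is in fact slightly more careful on one point: you keep the inequality strict on the left neighborhood (as admissibility of $\bar\rho_{0,L}-\delta$ requires), whereas the paper writes only $\mathcal{M}_L(s)\le\tfrac14$ there.
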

\begin{proof}
First, it follows from the continuous dependence on the parameters
for solutions of uniformly elliptic equations that
$\M_L(\bar\rho_{0,L}) \leq \frac{1}{4}$. If $\M_L(\bar{\rho}_{0,L})<
1/4$ and $\bar\rho_{0,L}> \rho^{**}$, then it is easy to see that
there exists a $\delta>0$ such that $\M_L(s)\leq 1/4$ for $s\in
(\bar{\rho}_{0,L}-\delta, \bar{\rho}_{0,L})$. There is a
contradiction. So the proof of the lemma is completed.
\end{proof}

Now we have the following lemma.
\begin{lemma}
There exists a $\bar \rho_0\in (\rho_0^*, \infty)$ independent of $L$ such that if $\rho_0>\bar\rho_0$, there exists a subsonic solution of \eqref{Tbvp} satisfying
\begin{equation}
\frac{|\nabla \psi_L|}{\sqrt{\gamma} H_L^{\frac{\gamma+1}{2}}(|\nabla \psi_L|^2, \psi_L)}\leq \frac{1}{4}.
\end{equation}
\end{lemma}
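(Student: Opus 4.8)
The plan is to upgrade the $L$-dependent threshold $\check\rho_{0,L}$ produced in Lemma~\ref{lemma31} to a threshold $\bar\rho_0$ that is independent of $L$, by feeding the uniform gradient estimate of Lemma~\ref{lem36} into the continuation argument of Lemma~\ref{lem38}. Two preliminary observations make the reduction clean. First, for every $\rho_0>\rho_0^*$ a solution $\psi_L$ of the truncated problem \eqref{Mbvp} with $0\le\psi_L\le m_L$ already exists by Lemma~\ref{lemma31}; and as soon as $\frac{|\nabla\psi_L|}{\Sigma(\B_L(\psi_L;\rho_0))}<\frac14$ holds, the cutoff $\zeta$ is inactive (since $\tfrac14<\tfrac12$ and $0\le\psi_L\le m_L$ force $\cmB_L=\B_L$ and $\cH_L=H_L$), so that $\psi_L$ is in fact a genuine subsonic solution of the original problem \eqref{Tbvp}. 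Second, because $\mclH$ always selects the density on the subsonic branch, $H_L=\mclH(|\nabla\psi_L|^2,\B_L)>\underline\varrho(\B_L)$, whence $\sqrt\gamma H_L^{(\gamma+1)/2}>\Sigma(\B_L)$; consequently the bound $\frac{|\nabla\psi_L|}{\Sigma(\B_L)}<\frac14$ automatically gives the desired estimate $\frac{|\nabla\psi_L|}{\sqrt\gamma H_L^{(\gamma+1)/2}}<\frac14$. Thus everything is reduced to proving a uniform-in-$L$ upper bound for $\bar\rho_{0,L}$.

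Next I would isolate the crucial $L$-independent a~priori bound for the normalized gradient. For any subsonic solution $\psi_L$ of \eqref{Tbvp} with $\frac{|\nabla\psi_L|}{\Sigma(\B_L)}\le\frac14$, the equation is uniformly elliptic with ellipticity ratio bounded below away from zero, so the constant $\msC$ in Lemma~\ref{lem36} is a genuine fixed number. Writing $\nabla\psi_L=\nabla\Psi_L+\nabla\bar\psi_L$ and using $|\nabla\bar\psi_L|=\rho_0 u_{0,L}(x_2)\le C''\rho_0$ together with $\|\Psi_L\|_{C^{1,\beta}(\overline{\Omega_L})}\le\msC\rho_0$ from \eqref{hgPsi}, I obtain $|\nabla\psi_L|\le C''\rho_0$ with $C''$ independent of $L$ (the uniformity of $u_{0,L}$ and of the comparison estimate \eqref{estbhpsi} being what guarantees this). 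Combining this with the lower bound $\Sigma(\B_L(\psi_L;\rho_0))\ge C_*\rho_0^{\frac{\gamma+1}{2}}$ established in the proof of Lemma~\ref{lemma31} yields
\[
\mathcal{M}_L(\rho_0)=\sup_{\psi_L\in\mathcal{S}_L(\rho_0)}\sup_{x\in\Omega_L}\frac{|\nabla\psi_L(x;\rho_0)|}{\Sigma(\B_L(\psi_L;\rho_0))}\le \frac{C''}{C_*}\,\rho_0^{\frac{1-\gamma}{2}},
\]
with $C''/C_*$ independent of $L$.

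The proof is then closed by the method of continuity. Suppose for contradiction that $\bar\rho_{0,L}>\rho^{**}$. By Lemma~\ref{lem38} this forces $\mathcal{M}_L(\bar\rho_{0,L})=\frac14$; since every solution at $\rho_0=\bar\rho_{0,L}$ then satisfies $\frac{|\nabla\psi_L|}{\Sigma(\B_L)}\le\frac14$, the a~priori bound of the previous paragraph applies and gives $\frac14\le \frac{C''}{C_*}\,\bar\rho_{0,L}^{\frac{1-\gamma}{2}}$, i.e. $\bar\rho_{0,L}\le\bigl(4C''/C_*\bigr)^{2/(\gamma-1)}$, a fixed $L$-independent number. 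Hence $\bar\rho_{0,L}\le\bar\rho_0:=\max\{\rho^{**},(4C''/C_*)^{2/(\gamma-1)}\}$ for every admissible $L$. Finally, for any $\rho_0>\bar\rho_0$ one has $\rho_0>\bar\rho_{0,L}$, so by the definition of $\bar\rho_{0,L}$ the solution of \eqref{Mbvp} furnished by Lemma~\ref{lemma31} satisfies $\frac{|\nabla\psi_L|}{\Sigma(\B_L)}<\frac14$; by the first paragraph it is therefore a subsonic solution of \eqref{Tbvp} and obeys $\frac{|\nabla\psi_L|}{\sqrt\gamma H_L^{(\gamma+1)/2}}<\frac14$, which is exactly the assertion.

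The main obstacle is the self-consistency of the a~priori estimate: Lemma~\ref{lem36} is only available for solutions already known to be uniformly subsonic, because $\msC$ degenerates as the local Mach number approaches $1$. This is precisely the point where the continuation Lemma~\ref{lem38} is indispensable, for it guarantees that at the borderline density $\bar\rho_{0,L}$ the supremum $\mathcal{M}_L$ equals exactly $\frac14$ rather than being realized in a nearly sonic regime, keeping the ellipticity constant, and hence $\msC$, under control so that the gradient estimate may legitimately be inserted. The remaining care lies in checking that $\rho^{**}$, $C''$ and $C_*$ are genuinely independent of $L$—resting on the uniformity of the truncated data $u_{0,L}$ and of \eqref{estbhpsi}—so that $\bar\rho_0$ depends only on $u_0$, $\gamma$, and $J=\sup f$.
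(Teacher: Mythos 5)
Your argument is correct and is essentially the paper's own proof: you combine the $L$-independent gradient bound of Lemma \ref{lem36} with the lower bound $\Sigma(\B_L(\psi_L;\rho_0))\geq C\rho_0^{\frac{\gamma+1}{2}}$ and the borderline identity $\mathcal{M}_L(\bar\rho_{0,L})=\frac14$ of Lemma \ref{lem38} to force $\bar\rho_{0,L}\leq (4\msC^{\sharp})^{\frac{2}{\gamma-1}}$ whenever $\bar\rho_{0,L}>\rho^{**}$, and then set $\bar\rho_0=\max\bigl(\rho^{**},(4\msC^{\sharp})^{\frac{2}{\gamma-1}}\bigr)$, exactly as in the text. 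Your two preliminary remarks --- that the truncation $\zeta$ is inactive once $\frac{|\nabla\psi_L|}{\Sigma(\B_L)}<\frac14$ with $0\leq\psi_L\leq m_L$, and that $\sqrt{\gamma}H_L^{\frac{\gamma+1}{2}}>\Sigma(\B_L)$ on the subsonic branch so the stated form of the estimate follows --- are left implicit in the paper but are correct and only tighten the same argument.
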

\begin{proof}
If $\bar\rho_{0,L}>\rho^{**}$, then it follows from Lemma \ref{lem38} that the problem \eqref{Tbvp} associated with $\rho_0=\bar\rho_{0,L}$ has a solution $\psi_L$ satisfying
\begin{equation}
\sup_{\Omega_L}\frac{|\nabla \psi_L|}{\Sigma(\B_L(\psi_L;\rho_0)} =\frac{1}{4}.
\end{equation}
It follows from Lemma \ref{lem36} and the definition of $\B_L$ that one has
\begin{equation}
\frac{1}{4} \leq \frac{\sup |\nabla \psi_L|}{\inf \Sigma(\B(\psi_L;\rho_0)}\leq \frac{\msC \bar\rho_{0, L}}{C\bar\rho_{0, L}^{\frac{\gamma+1}{2}}}=\msC^\sharp \rho_0^{\frac{1-\gamma}{2}},
\end{equation}
where $\msC^\sharp$ is a constant independent of $L$. Therefore,
\begin{equation}
\bar\rho_{0, L} \leq (4\msC^\sharp)^{\frac{2}{\gamma-1}}.
\end{equation}
Choose
\begin{equation}
\bar\rho_0=\max\left(\rho^{**},
(4\msC^\sharp)^{\frac{2}{\gamma-1}}\right).
\end{equation}
It is easy to see that $\bar{\rho}_0$ is independent of $L$ and that if $\rho_0>\bar{\rho}_0$, the problem \eqref{Tbvp} has a solution $\psi_L$ satisfying $\frac{|\nabla \psi_L|}{\Sigma(\B(\psi_L;\rho_0))}\leq 1/4$. This finishes the proof of the lemma.
\end{proof}

\subsection{Existence of subsonic solutions with large incoming density}
If $\rho_0>\bar\rho_0$, then the problem \eqref{Tbvp} has a solution $\psi_L$ satisfying
\[
\|\psi_L-\bar\psi_L\|_{C^1(\Omega)}\leq \msC\rho_0\quad
\text{and}\quad \|\nabla (\psi_L-\bar\psi_L)\|_{L^2(\Omega)}\leq
\msC,\quad \text{and}\quad \sup_{\Omega_L}\frac{|\nabla
\psi_L|}{\Sigma(\B_L(\psi_L;\rho_0))} \leq \frac{1}{4}
\]
where $\msC$ is a constant independent of $L$. Let $L\to \infty$,
then there exists a subsequence of $\{\psi_L\}$ still labelled by
$\{\psi_L\}$, converging to $\psi$ satisfying \be\label{t1}
\|\psi-\bar\psi\|_{C^1(\Omega)}\leq \msC\rho_0\quad \text{and}\quad
\|\nabla (\psi-\bar\psi)\|_{L^2(\Omega)}\leq \msC,\quad
\text{and}\quad
\sup_{\Omega}\frac{|\nabla\psi|}{\Sigma(\B(\psi;\rho_0))}\leq
\frac{1}{4} \ee where the constant $\msC$ is a uniform constant.
Hence $\psi$ is a subsonic solution of the problem \eqref{b01}.
This finishes the proof for Proposition \ref{prop31}.

\section{Fine properties of the subsonic solutions past a wall}\label{secfine}

In this section, we study properties of subsonic solutions for
\eqref{b01} constructed in previous section, such as asymptotic
behaviors and positivity of horizontal velocity of subsonic flows.
\subsection{Asymptotic behavior at the far fields}
We claim that \be\label{c23} |\nabla\Psi(x_1,x_2)|\rightarrow
0\quad\text{as}\quad |x|\rightarrow +\infty,\ee where
$\Psi=\psi-\bar\psi$ with $\bar\psi$ defined in \eqref{defbarpsi}.
Note that the estimate \eqref{c23} gives the asymptotic behavior of
subsonic flows in the far fields. It follows from the definition
that $\Psi$ satisfies the equation
\begin{equation}
\left(\left(1-\frac{|\nabla \psi|^2}{\gamma H^{\gamma+1}}\right)\delta_{ij} +\frac{\partial_i\psi\partial_j\psi}
{\gamma H^{\gamma+1}}\right)\partial_{ij}\Psi ={F F'(H^2-\rho_0^2 -\frac{(\partial_1 \psi)^2}{\gamma H^{\gamma+1}}\rho_0^2)} \quad \text{in}\,\, \Omega.
\end{equation}
Since $\|\Psi\|_{L^\infty(\bar\Omega})\leq \msC\rho_0$, it follows
from the H\"{o}lder gradient estimate that
\begin{equation}\label{eq44}
\|\Psi\|_{C^{1, \beta}(\bar\Omega)}\leq \msC\rho_0.
\end{equation}

Now we prove \eqref{c23} by contradiction argument. If
(\ref{c23}) is false, for any positive constant $\e_0$, there exists
a sequence
$\left\{x^{(i)}=\left(x_1^{(i)},x_2^{(i)}\right)\right\}_{i=1}^\infty$
going to infinity such that
$\left|\nabla\Psi\left(x_1^{(i)},x_2^{(i)}\right)\right|\geq
\varepsilon_0$ for some positive constant $\varepsilon_0$. It follows from \eqref{eq44} that there exists a uniform
constant $\delta_0>0$ such that
$$|\nabla\Psi(x)|\geq \varepsilon_0/2\quad\quad \text{for\ any}\quad
x\in B_{\delta_0}\left(x^{(i)}\right),$$ and
$$B_{\delta_0(x^{(i)})}\cap
B_{\delta_0(x^{(j)})}=\emptyset\quad\quad\text{for}\quad i\neq j.$$
Furthermore, it holds that \be
\iint_{\bigcup_iB_{\delta_0}(x^{(i)})}|\nabla\Psi|^2dx_1dx_2=\sum_{i=1}^{+\infty}\iint_{B_{\delta_0}(x^{(i)})}|\nabla\Psi|^2dx_1dx_2=\infty,
\ee which contradicts to (\ref{d3}).

\subsection{Positivity of horizontal velocity except at the corners}
It follows from Proposition \ref{prop21} that
the flow $(\rho, u, v) =(H, \frac{\partial_2\psi}{H}, -\frac{\partial_1\psi}{H})$ is indeed a solution of two dimensional steady Euler equations \eqref{a0} with boundary conditions \eqref{a01}-\eqref{upMach}, if it satisfies \eqref{eq21} and \eqref{eq22}. In order to justify \eqref{eq21} and \eqref{eq22}, now we need only to show that the horizontal velocity of the subsonic solution is always positive except at the corner points, namely
$\p_{x_2}\psi>0$ in $\bar\O\backslash\{P_1,P_2\}$.

Since
$$\p_{x_2}\psi=\p_{x_2}\Psi+\bar\psi'(x_2)=\p_{x_2}\Psi+\rho_0u_0(x_2)$$
and $|\nabla\Psi| \rightarrow 0$ as $|x|\rightarrow\infty$, there exist $R>0$ and $\delta>0$ such that $\p_{x_2}\psi>\delta$ for
$|x|>R$. Hence, the horizontal velocity is positive for $\{(x_1, x_2)||x_1|\geq R, \, x_2>R\}$.  Let $\O'=\left\{(x_1,x_2)\in \R^2|
f(x_1)<x_2<R, |x_1|< R\right\}$. Combining the argument \cite[Lemma2]{XX3} and the convexity condition in \eqref{a2} which implies
\[
F(\psi)F''(\psi)+\left(F'(\psi)\right)^2\geq 0,
\]
yields the positivity of the horizontal velocity for subsonic flows
in bounded domain $\Omega'$. Consequently, the horizontal velocity
of the subsonic solutions is positive in the whole domain $\O$. Thus $(\rho, u, v)$ is  a solution of the Euler system \eqref{a0} with boundary conditions \eqref{a01}-\eqref{upMach}.

\section{The uniqueness of subsonic Euler flows past a wall}\label{secunique}
In this section, we show the uniqueness of subsonic solutions
satisfying \eqref{d3} and  the asymptotic behavior
\eqref{a6}-\eqref{a16} in the far fields.

Assume that $\psi_{(i)}\in C^{2,\a}(\O)\cap C^{1,\b}(\bar\O)$ ($i=1,2$)
solve the problem \eqref{b01} and satisfy \be\label{e3}
\psi_{(i)}-\bar{\psi}\in L^{\infty}(\O),\quad
\nabla(\psi_{(i)}-\bar{\psi})\in L^2(\O),\quad \text{ and}\quad
|\nabla\psi_{(i)}|^2\leq(1-2\e_0)\Sigma^2(\B(\psi_{(i)}, \rho_0))  \ee for some
$\e_0>0$. Set $\phi=\psi_{(1)}-\psi_{(2)}$. Then it follows from \eqref{eq342} and \eqref{t1} that $\phi$ satisfies
\begin{equation}
\|\phi\|_{L^\infty(\Omega)}\leq C\rho_0\quad \text{and}\quad
\|\nabla\phi\|_{L^2(\Omega)}\leq \msC.
\end{equation}

Furthermore, $\phi$ solves the following problem
\be\label{e4}\left\{\ba{l}
\p_i\left(a_{ij}(\nabla \psi_{(1)}, \psi_{(1)};\nabla\psi_{(2)}, \psi_{(2)})\p_j\phi\right)+\p_i(b_i(\nabla \psi_{(1)}, \psi_{(1)};\nabla\psi_{(2)}, \psi_{(2)})\phi)\\
\quad =b_i(\nabla \psi_{(1)}, \psi_{(1)};\nabla\psi_{(2)}, \psi_{(2)})\p_i\phi+d(\nabla \psi_{(1)}, \psi_{(1)};\nabla\psi_{(2)}, \psi_{(2)})\phi\quad\quad
\text{in}\quad\O,\\
\phi=0\quad\quad \text{on}\quad x_2=f(x_1),
\ea\right. \ee where
$a_{ij}$, $b_i$, and $d$ are defined in \eqref{c25}-\eqref{c28} with $H_L$ replaced by $H$.
In this section, we denote  $H_{(t)}=H(|\nabla \psi_{(1+t)}|^2,\psi_{(1+t)}; \rho_0)$ with  $\psi_{(1+t)}=(1-t)\psi_{(1)}+t\psi_{(2)}$ for $t\in [0,1]$.

Denote  $B^\O_{r}(0)= B_{r}(0)\cap \O$ for $r > 0$ and let $\eta$ be
the smooth cut-off function satisfying \be\label{e9} \eta
=\left\{\ba{ll}
1\quad\quad&\text{for}\quad (x_1,x_2)\in B^\O_R(0),\\
0\quad\quad&\text{for}\quad (x_1,x_2)\in \O\backslash
B_{2R}(0),\ea\right. \ee and $|\g \eta| \leq 2/R$.

Multiplying the both sides of the equation in \eqref{e4} with
$\eta^2\phi$ and integrating in $\O$ yield that \be\label{e6}
\ba{rl}
&\iint_{B^\O_{2R}(0)}\eta^2 \left[\int^1_0\f{|\nabla
\phi|^2}{H_{(t)}}+\f{\left|\nabla\phi\cdot \nabla
\psi_{(1+t)}-F(\psi_{(1+t)})F'(\psi_{(1+t)})H_{(t)}^2\phi\right|^2}{H_{(t)}(\gamma H_{(t)}^{\gamma+1}-|\nabla
\psi_{(1+t)}|^2)}dt \right]dx_1dx_2\\
& + \iint_{B^\O_{2R}(0)}\eta^2 \left[\int^1_0 \phi^2\int^1_0H_{(t)}\left(F(\psi_{(1+t)})F''(\psi_{(1+t)})+(F'(\psi_{(1+t)}))^2\right)dt\right]dx_1dx_2\\
=&-2\iint_{B^\O_{2R}(0)}\left(a_{ij}\eta\p_i\eta\phi\p_j\phi-\eta\p_i\eta\phi\int^1_0\f{\p_i\psi_{(1+t)} F(\psi_{(1+t)})F'(\psi_{(1+t)})H_{(t)}^2}{(\gamma H_{(t)}^{\gamma+1}-|\nabla \psi_{(1+t)}|^2)}dt\right)dx_1dx_2.
\ea \ee
Note \be\label{e8} \int^1_0 H_{(t)}\left(F(\psi_{(1+t)})F''(\psi_{(1+t)})+(F'(\psi_{(1+t)}))^2\right)dt =\int_0^1
\f{u_0''\left(\kappa(\psi_{(1+t)};\rho_0)\right)}{\rho_0^2
u_0\left(\kappa(\psi_{(1+t)};\rho_0)\right)} H_{(t)}dt \geq 0. \ee
Therefore, it follows from \eqref{e6} that one has
\begin{equation}\label{e11}
\begin{aligned}
&\iint_{B^\O_{2R}(0)}\eta^2 \left[\int^1_0 \f{\left|\nabla\phi\cdot \nabla
\psi_{(1+t)}-F(\psi_{(1+t)})F'(\psi_{(1+t)})H_{(t)}^2\phi\right|^2}{H_{(t)}(H_{(t)}^2 c^2(H_{(t)})-|\nabla
\psi_{(1+t)}|^2)}dt \right]dx_1dx_2\\
\leq&\msC\iint_{B^\O_{2R}(0)}\eta \left( |\phi|\left|\nabla\eta\cdot \nabla\phi\right| + |\nabla\eta| \int_0^1 \left|F(\psi_{(1+t)})F'(\psi_{(1+t)}) \phi\right|dt \right)dx_1dx_2.
\end{aligned}
\end{equation}
Denote $A^\O_{R, 2R}(0)=B^\O_{2R}(0)\backslash \overline{B^\O_R(0)}$. Noting that $\phi$ and $\psi_{(1+t)}$ are uniformly bounded and
 using the Young inequality gives
\begin{equation}\label{5tech}
\begin{aligned}
& \iint_{B^\O_{2R}(0)}\eta^2 \left(\int_0^1\left|F(\psi_{(1+t)})F'(\psi_{(1+t)})\phi\right|^2dt\right) dx_1dx_2\\
\leq &   \iint_{B^\O_{2R}(0)}\eta^2 \left[\int^1_0\f{|\nabla\phi\cdot \nabla
\psi_{(1+t)}|^2}{H_{(t)}(\gamma H_{(t)}^{\gamma+1}-|\nabla\psi_{(1+t)}|^2)}dt \right]dx_1dx_2\\
&+\iint_{B^\O_{2R}(0)}\eta^2 \left[\int^1_0 \f{\left|\nabla\phi\cdot \nabla
\psi_{(1+t)}-F(\psi_{(1+t)})F'(\psi_{(1+t)})H_{(t)}^2\phi\right|^2}{H_{(t)}(\gamma H_{(t)}^{\gamma+1}-|\nabla
\psi_{(1+t)}|^2)}dt \right]dx_1dx_2\\
\leq&\msC\iint_{B^\O_{2R}(0)} |\nabla\phi|^2dx_1dx_2+ \msC\iint_{A^\O_{R, 2R}(0)}|\nabla\eta\cdot \nabla\phi| dx_1dx_2\\
&+ \msC\delta \iint_{B^\O_{2R}(0)}\eta^2\left(\int_0^1 \left|F(\psi_{(1+t)})F'(\psi_{(1+t)}) \phi\right|dt \right)^2 dx_1dx_2 + \msC(\delta)\iint_{A^\O_{R,2R}(0)}|\nabla\eta|^2 dx_1dx_2\\
\leq&\msC\iint_{B^\O_{2R}(0)} |\nabla\phi|^2dx_1dx_2 + \msC\delta \iint_{B^\O_{2R}(0)}\eta^2 \int_0^1 \left|F(\psi_{(1+t)})F'(\psi_{(1+t)}) \phi\right|^2dt  dx_1dx_2\\
& + \msC(\delta)\iint_{A^\O_{R, 2R}(0)}|\nabla\eta|^2 dx_1dx_2.
\end{aligned}
\end{equation}
Choosing a suitable small $\delta$ yields that
\be\label{e12}
\ba{rl}
&\iint_{B^\O_{R}(0)} \int_0^1\left|F(\psi_{(1+t)})F'(\psi_{(1+t)}) \phi\right|^2dt ~dx_1dx_2\\
\leq&\msC\iint_{B^\O_{2R}(0)} |\nabla\phi|^2dx_1dx_2  + \msC\iint_{A^\O_{R, 2R}(0)}\f1{R^2} dx_1dx_2\\
\leq& \msC.
\ea
\ee
It follows from \eqref{e6} that one has
\be\label{e7}
\ba{rl}
&\iint_{B^\O_R(0)}|\nabla\phi|^2dx_1dx_2\\
\leq& \msC\iint_{A^\O_{R, 2R}(0)}|\nabla\eta|\left(|\nabla\phi|+\phi\int^1_0F(\psi_{(1+t)})F'(\psi_{(1+t)})dt\right)dx_1dx_2\\
\leq& \msC\|\nabla\eta\|_{L^2\left(A^\O_{R,
2R}(0)\right)}\left(\|\nabla\phi\|_{L^2\left(A^\O_{R,
2R}(0)\right)}+\left\|\phi\int^1_0F(\psi_{(1+t)})F'(\psi_{(1+t)})dt\right\|_{L^2\left(A^\O_{R,
2R}(0)\right)}\right). \ea \ee
In view of \eqref{e12} and letting $R\rightarrow+\infty$ yields that
\[
\iint_{\O}|\nabla\phi|^2dx_1dx_2=0.
\]
Thus $\nabla\phi=0$ in $\O$. Since $\phi=0$ on $\p\O$, one has $\phi= 0$ in $\O$.
This proves the
uniqueness of subsonic solution to the problem \eqref{b01}.

\section{Existence of the critical density in the upstream}\label{seccritical}

In this section, we  show that there exists a critical density $\rho_{cr}$ such that there exists a subsonic solution as long as the density of the incoming flows is less than $\rho_{cr}$.

\begin{prop}\label{prop61}
There exists
 a critical value $\underline\rho_0> 0$, such that if
 $\rho_0>\underline\rho_0$, there exists a unique $\psi$ which solves
 the following problem
\be\label{d1} \left\{\ba{ll}
\Div\left(\f{\nabla\psi}{H(|\nabla\psi|^2,\psi;\rho_0)}\right)=F(\psi;\rho_0)F'(\psi;\rho_0)H\quad&\text{in}\quad\O,\\
\psi=0\ \ \
&\text{on}\ \ \ \Gamma,\ea\right.  \ee and satisfies
 \be\label{d30}
\psi\geq 0,\,\,|\psi-\bar\psi| \leq C\rho_0\ \ \text{in}\ \ \bar\O, \ \ \|\nabla(\psi-\bar)\psi\|_{L^2(\Omega)}\leq \msC\rho_0,\,\, \text{and}\ \ \
M(\rho_0)=\sup_{\bar\O}\f{|\nabla\psi|}{\Sigma(\B(\psi;\rho_0))}<1.
 \ee
 Moreover, either $M(\rho_0)\rightarrow 1$ as
 $\rho_0\downarrow\underline\rho_0$ or there does not exist a $\sigma>0$
 such that the problem \eqref{d1} has a solution for all $\rho_0\in(\underline\rho_0-\sigma,\underline\rho_0)$ and
 \be\label{d31}
\sup_{\rho_0\in(\underline\rho_0-\sigma,\underline\rho_0)}M(\rho_0)<1.
 \ee
\end{prop}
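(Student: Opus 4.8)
The plan is to locate the critical density as an infimum and then read off the stated alternative from a compactness argument at that infimum, in the spirit of the nozzle argument in Lemma \ref{lem38}. First I would let $\mathcal{A}$ be the set of all $s>\rho_0^*$ with the property that for \emph{every} $\rho_0>s$ the problem \eqref{d1} possesses a solution obeying the bounds \eqref{d30}, and put $\underline\rho_0=\inf\mathcal{A}$. Proposition \ref{prop31} gives $\bar\rho_0\in\mathcal{A}$, so $\mathcal{A}$ is nonempty and upward closed, whence $\rho_0^*\le\underline\rho_0\le\bar\rho_0<\infty$. For any $\rho_0>\underline\rho_0$ one may choose $s\in\mathcal{A}$ with $s<\rho_0$, and then $\rho_0>s$ forces a solution satisfying \eqref{d30}; its uniqueness comes from Section \ref{secunique}, so that $M(\rho_0)=\sup_{\bar\O}|\g\psi|/\Sigma(\B(\psi;\rho_0))$ is an unambiguously defined function on $(\underline\rho_0,\infty)$. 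The three quantitative bounds in \eqref{d30} are inherited from Corollary \ref{cor35} and Lemma \ref{lem37} after passing $L\to\infty$ (cf.\ \eqref{t1}), while $M<1$ is built into membership in $\mathcal{A}$.

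The substance of the proposition is the final alternative, which I would establish in contrapositive form: assuming $M(\rho_0)\not\to1$ as $\rho_0\downarrow\underline\rho_0$, I would show that no $\sigma>0$ can render \eqref{d1} solvable throughout $(\underline\rho_0-\sigma,\underline\rho_0)$ with $\sup M<1$. Arguing by contradiction, suppose such a $\sigma$ exists. The hypothesis $M(\rho_0)\not\to1$ yields $\epsilon_0>0$ and $\rho_0^{(n)}\downarrow\underline\rho_0$ with $M(\rho_0^{(n)})\le1-\epsilon_0$, while the assumed $\sigma$-interval provides solutions on $(\underline\rho_0-\sigma,\underline\rho_0)$ with $\sup M<1$. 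On this whole strictly subsonic family the equation in \eqref{d1} is uniformly elliptic, so the $L$-independent $C^{1,\beta}$ bound of Lemma \ref{lem36} and the $L^2$ gradient bound of Lemma \ref{lem37} hold with a single constant. Passing $\rho_0^{(n)}\to\underline\rho_0$ and applying Arzel\`a--Ascoli on an exhaustion of $\O$ then produces a limit $\psi^\ast$ solving \eqref{d1} at $\underline\rho_0$, obeying \eqref{d30} and, via its $L^2$ gradient bound and the contradiction argument of Section \ref{secfine}, the far-field decay $|\g(\psi^\ast-\bar\psi)|\to0$, with $M(\underline\rho_0)\le1-\epsilon_0<1$.

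With the gap at $\underline\rho_0$ thus filled, solutions satisfying \eqref{d30} exist for \emph{every} $\rho_0\in(\underline\rho_0-\sigma,\infty)$: below $\underline\rho_0$ from the assumed $\sigma$-interval (where the remaining bounds of \eqref{d30} follow from Lemmas \ref{lem36}--\ref{lem37}), at $\underline\rho_0$ from $\psi^\ast$, and above $\underline\rho_0$ from the existence already in hand. Hence $\underline\rho_0-\sigma\in\mathcal{A}$, contradicting $\underline\rho_0=\inf\mathcal{A}$. This contradiction proves the dichotomy and hence the proposition.

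I expect the main obstacle to be the compactness step at $\underline\rho_0$ on the unbounded domain: the limit must be taken while preserving \emph{simultaneously} the strict subsonicity $M(\underline\rho_0)<1$, the weighted bound $|\psi-\bar\psi|\le C\rho_0$, the $L^2$ control of $\g(\psi-\bar\psi)$, and the decay $|\g\Psi|\to0$ at infinity, for it is exactly these properties that place $\psi^\ast$ in the admissible class and so contradict the minimality of $\underline\rho_0$. The decisive input is the $L$-independence of the estimates in Lemmas \ref{lem36}--\ref{lem37}, which keeps the governing constants from deteriorating as $\rho_0\downarrow\underline\rho_0$; the remaining continuity and bookkeeping then follow the pattern already used for the truncated problem in Lemma \ref{lem38}.
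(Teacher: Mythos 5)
Your architecture — defining $\underline\rho_0=\inf\mathcal{A}$ directly for the untruncated problem and proving the dichotomy by contradiction plus compactness — is genuinely different from the paper's, but it has a gap at its decisive step. To contradict $\underline\rho_0=\inf\mathcal{A}$ you must show $\underline\rho_0-\sigma\in\mathcal{A}$, i.e.\ you must place the solutions furnished on $(\underline\rho_0-\sigma,\underline\rho_0)$ by the negated alternative into the class defining $\mathcal{A}$, which means verifying \eqref{d30} for them. The negated alternative only gives solutions of \eqref{d1} satisfying \eqref{d31}, and your patch — that ``the remaining bounds of \eqref{d30} follow from Lemmas \ref{lem36}--\ref{lem37}'' — is invalid: those lemmas are statements about solutions of the nozzle problem \eqref{Tbvp} on $\Omega_L$, and their proofs use the nozzle structure essentially. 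The $L^\infty$ bound (Proposition \ref{proplinfty}, Corollary \ref{cor35}) rests on a maximum-principle comparison with $\hat\psi_L$ that needs the Dirichlet datum $\psi_L=m_L$ on the top boundary $\Gamma_L$, and the energy estimate of Lemma \ref{lem37} has lateral boundary terms $\int_{f(\pm N)}^{L}|\Psi_L(\pm N,x_2)|\,dx_2$ that are controlled only because $x_2$ runs over a finite interval and because of far-field results proved for nozzle solutions; neither argument runs on the unbounded domain $\O$ for a solution about which nothing is assumed at infinity. In fact the implication you need is false, not merely unproved: when $u_0\equiv\bar u$ is constant, the equation in \eqref{d1} depends on the upstream data only through the Bernoulli constant $h(\rho_0)+\bar u^2/2$, so any other subsonic constant state $(\rho',\bar u')$ with $h(\rho')+(\bar u')^2/2=h(\rho_0)+\bar u^2/2$ produces (by the paper's own existence theory applied to that upstream state) a uniformly subsonic solution of the \emph{same} problem \eqref{d1} whose difference from $\bar\psi=\rho_0\bar u x_2$ grows linearly in $x_2$ and thus violates \eqref{d30}. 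Hence solutions of \eqref{d1} with \eqref{d31} need not lie in your class, and the contradiction does not follow.

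This is exactly the difficulty the paper's different construction is built to avoid: rather than defining the critical density through solvability of the untruncated problem on $\O$, the paper introduces a sequence of truncations $\zeta_n$ with levels $1-2\e_n\uparrow 1$, defines critical densities $\underline\rho_{0,L}^{(n)}$ for the truncated nozzle problems \eqref{Mbvpn} — which are solvable for \emph{every} $\rho_0>\rho_0^*$, so membership questions never concern uncontrolled solutions of \eqref{d1} — and sets $\underline\rho_0=\inf_n\liminf_{L\to\infty}\underline\rho_{0,L}^{(n)}$, the dichotomy being inherited from the argument of \cite[Proposition 6]{XX3}. Your compactness step at $\underline\rho_0$ itself is sound: reading \eqref{d30} as a class with fixed constants, the margin $1-\epsilon_0$ coming from the negation of $M(\rho_0)\to1$ gives uniform ellipticity along $\rho_0^{(n)}\downarrow\underline\rho_0$, and local estimates plus a diagonal argument produce $\psi^\ast$ in the class. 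So your proof could be repaired either by strengthening the second alternative so that the hypothetical solutions below $\underline\rho_0$ are required to satisfy \eqref{d30} as well — which is consistent with how Theorem \ref{th1}(4) is actually phrased, since there the flows must satisfy the asymptotic behavior \eqref{a6}--\eqref{a8} — or by adopting the paper's truncation scheme.
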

\begin{proof}
The key idea of the proof is similar to the proof of \cite[Proposition 6]{XX3}.

Let $\left\{\e_n\right\}_{n=1}^\infty$ be a strictly decreasing
positive sequence satisfying $\e_1\leq 1/4$ and $\e_n\rightarrow0$ as
$n\rightarrow\infty$, and $\zeta_n$ be a sequence of smooth
increasing odd functions satisfying
\be\label{d7}
\zeta_n(z)=\left\{\ba{ll}
z\ \ \ \ \ \ & \text{if}\ \ |z|<1-2\e_n,\\
1-3\e_n/2 &\text{if}\ \ z\geq 1-\e_n. \ea\right. \ee For
$\cmB_L(\psi; \rho_0)$ defined in \eqref{defcWB},  set
$${\check{H}}_L^{(n)}(|\nabla \psi|^2, \psi;
\rho_0)=\mclH\left(\zeta_n^2\left(\frac{|\nabla\psi|}{\Sigma(\cmB_L(\psi;
\rho_0))}\right)\Sigma^2(\cmB_L(\psi; \rho_0)),
\cmB_L(\psi;\rho_0)\right).$$ We first study the problem
\begin{equation}\label{Mbvpn}
\left\{
\begin{aligned}
& \left(\left(1-\zeta^2\left(\frac{|\nabla\psi|}{\sqrt{\gamma}(\cH^{(n)}_L)^{\frac{\gamma+1}{2}}}\right)\right)\delta_{ij} +\zeta\left(\frac{\partial_i\psi}{\sqrt{\gamma}(\cH^{(n)}_L)^{\frac{\gamma+1}{2}}}\right)\zeta\left(\frac{\partial_j\psi}{\sqrt{\gamma}(\cH^{(n)}_L)^{\frac{\gamma+1}{2}}}\right)
\right)\partial_{ij}\psi\\
& \quad \quad =\frac{\cW_L(\cH^{(n)}_L)^2}{1-\zeta^2\left(\frac{|\nabla\psi|}{\sqrt{\gamma}(\cH^{(n)}_L)^{\frac{\gamma+1}{2}}}\right)}\quad \text{in}\,\,\Omega_L,\\
& \psi=0\quad \text{on}\,\, \Gamma, \quad \psi=m_L \quad \text{on}\,\, \Gamma_L,
\end{aligned}
\right.
\end{equation}
where $m_L$ is defined in \eqref{defFmL}. Similar to the proof for
Lemma \ref{lemma31}, one can show that the problem \eqref{Mbvpn} has
a solution $\psi_L$ for any $\rho_0> \rho_0^*$. Given $\rho_0\in
(\rho_0^*, \infty)$, let $\mathcal{S}_L^{(n)}(\rho_0)$ be the set of
all solutions of the problem \eqref{Mbvpn}. Denote
\begin{equation}
\mathcal{M}_L^{(n)}(\rho_0) =\sup_{\psi_L\in \mathcal{S}_L(\rho_0)} \sup_{x\in \Omega_L}\frac{|\nabla\psi_L(x;\rho_0)|}{\Sigma(\B_L(\psi_L; \rho_0))}
\end{equation}
and
\begin{equation}
\begin{aligned}
\underline\rho_{0,L}^{(n)}= \inf\left\{s|  \text{for any}\,\, \rho_0\geq s, \mathcal{M}_L^{(n)}(\rho_0) \leq 1-3\e_n \right\}.
\end{aligned}
\end{equation}
It is easy to see that $\rho_{0,L}^{(n)} \leq \bar\rho_0$.
For any $\rho_0> \underline{\rho}_{0,L}^{(n)}$, Corollary \ref{cor35} and Lemma \ref{lem37} show that the associated subsonic solution $\psi_L$ of the problem \eqref{Mbvpn} satisfies
\begin{equation}
|\psi_L-\bar\psi_L|\leq C\rho_0\quad \text{and}\quad \|\psi_L-\bar\psi_L\|_{L^2(\Omega_L)}\leq \msC^{(n)}\rho_0,
\end{equation}
where $\msC^{(n)}$ depends on $\e_n$. Define
\begin{equation}
\underline{\rho}_0^{(n)} =\liminf_{L\to \infty} \underline{\rho}_{0,L}^{(n)}.
\end{equation}
If  $\rho_0>\underline{\rho}_0^{(n)}$, there exists a solution $\psi(\cdot, \rho_0)$ of the problem \eqref{b01} satisfying
\[
 \sup_{\Omega}\frac{|\nabla\psi(\cdot; \rho_0)|}{\sqrt{\gamma} H^{\frac{\gamma+1}{2}}(|\nabla \psi|^2, \psi; \rho_0)}\leq 1-4\e_n.
\]
Furthermore, it is easy to see that $\{\underline{\rho}_0^{(n)}\}$ is a decreasing sequence.
Define $\underline \rho_0=\inf\underline{\rho}_0^{(n)}$. If $\rho_0>\underline\rho_0$, there is always a solution $\psi$ of the problem \eqref{d1}.
 The same
argument in \cite[Proposition 6]{XX3} gives that $\underline\rho_0$ is the critical value described in Proposition \ref{prop61}. This finishes
the proof of the proposition.
\end{proof}

As a direct consequence of Proposition 6.1,  we complete
the proof of Theorem  \ref{th1}.

\section{Subsonic Euler flows with general incoming velocity}\label{secgeneral}
In this section, we  consider the existence and uniqueness of subsonic Euler flows past a wall when the incoming horizontal velocities are general small perturbations of a constant.

{\bf Proof of Theorem \ref{th2}.} Theorem \ref{th2} is proved in a
similar fashion as that for Theorem \ref{th1}, so we only sketch the proof and emphasize on
the main differences as follows. The main difference is that the convex
condition $u''_0(x_2)\geq 0$ in Theorem \ref{th1} is replaced by the
smallness of $u_0'(x_2)$ in \eqref{a05}.

{\bf Step 1. Subsonic solutions in nozzles and their uniform estimates.} For $L>0$ sufficiently large,
let $g_L$ be defined in \eqref{b4}
and $u_{0,L}(x_2)=u_0(0)+\int_0^{x_2}g_L(s)ds$. Hence for $x_2\in [0, L]$, we have
\[
u_{0,L}(x_2) \geq \inf_{x_2\in [0, L-1]}u_{0}(x_2) -\frac{|u_0'(L-1)|}{2}.
\]
If $L$ is sufficiently large, then $u_{0,L}(x_2) \geq \bar u/2$ for all $x_2\in [0, L]$.
Furthermore,  $u_{0,L}(x_2)$ satisfies $u_{0,L}'(L)=g_L(L)=0$.

Let $F_L(\psi; \rho_0)$, $W_L(\psi;\rho_0)$, $m_L$ be the same as that in \eqref{defFmL}.
We have the following proposition.
\begin{prop}
For any $k>1$, there exists a constant $\e_1>0$ independent of $L$ such that if $u_0(x_2)$ satisfies the conditions
\eqref{a04}-\eqref{a05} with $\e\in (0, \e_1)$, then there exists a $\bar
\rho_0\in (\rho_0^*, \infty)$ independent of $L$, such that if $\rho_0>\bar\rho_0$, there exists
a solution $\psi_L\in C^{2,\a}(\O_L)\cap C^{1,\b}(\bar\O_L)$ of the problem \eqref{Tbvp} and satisfies \be\label{g4} 0\leq\psi_L\leq m_L \
\ \text{in}\ \ \O_L \quad \text{and}\quad
\sup_{x\in\Omega_L}\frac{|\nabla\psi_L|}{\Sigma(\B_L(\psi_L;\rho_0))}\leq \frac{1}{4}. \ee
\end{prop}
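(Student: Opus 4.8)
The plan is to reproduce the scheme of Section \ref{secexistence}, with the convexity hypothesis $u_0''\geq 0$ of Theorem \ref{th1} replaced by the smallness assumption \eqref{a05}. The existence of a solution $\psi_L$ of \eqref{Tbvp} with $0\leq \psi_L\leq m_L$ for every $\rho_0>\rho_0^*$ is obtained exactly as in Lemma \ref{lemma31}: solve the truncated problem \eqref{Mbvp} on the exhausting subdomains $\Omega_{L,k}$ via \cite[Theorem 12.5]{GT}, use the sign of $\check W_L$ and the maximum principle to get $0\leq \psi_{L,k}\leq m_L$, and pass to the limit $k\to\infty$ with the interior and near-corner H\"older gradient estimates of \cite[Theorem 12.4]{GT} and \cite[Section 2.2]{DX}. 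The convexity of $u_0$ was used in Section \ref{secexistence} only through the sign of the zeroth-order coefficient $d_2$ in the linearized problems \eqref{sup1} and \eqref{L2-2}, so this is the sole place where the argument must be changed.

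The crucial step is therefore the uniform $L^\infty$ estimate $-C\rho_0\leq \Psi_L\leq 0$ of Corollary \ref{cor35}, with $\Psi_L=\psi_L-\bar\psi_L$. Recall from \eqref{c28} that $d_2=\int_0^1 H_L\bigl(F_LF_L''+(F_L')^2\bigr)\,dt$ and that $F_LF_L''+(F_L')^2=u_{0,L}''(\kappa_L)/(\rho_0^2\,u_{0,L}(\kappa_L))$; under \eqref{a05} this is no longer nonnegative, but it is bounded in absolute value by $C\e\,\rho_0^{-2}(1+\kappa_L)^{-(k+2)}$. In the energy identity \eqref{sup3} for $\hat\Psi_L^+=\max\{\hat\psi_L-\psi_L,0\}$ the term $-\iint_{\hat\Omega_L} d_2(\hat\Psi_L^+)^2$ can thus no longer simply be dropped. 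My plan is to absorb it into the positive quadratic form on the left-hand side. Since a streamline value $\psi$ corresponds to a height $\kappa_L(\psi)\sim x_2$, the decay in \eqref{a05} becomes a spatial weight decaying like $(1+x_2)^{-(k+2)}$ (here $k>1$), so that $|d_2|\leq C\e\,\rho_0^{-1}(1+x_2)^{-(k+2)}$ after using $H_L\sim\rho_0$. The weighted Poincar\'e inequality of Appendix \ref{secappend} then gives
\[
\iint_{\hat\Omega_L}|d_2|(\hat\Psi_L^+)^2\,dx\leq \frac{C\e}{\rho_0}\iint_{\hat\Omega_L}(1+x_2)^{-(k+2)}(\hat\Psi_L^+)^2\,dx\leq \frac{C\e}{\rho_0}\iint_{\hat\Omega_L}|\nabla\hat\Psi_L^+|^2\,dx,
\]
with $C$ independent of $L$ and $\rho_0$. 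The first term on the left-hand side of \eqref{sup3} is bounded below by $c\rho_0^{-1}\iint|\nabla\hat\Psi_L^+|^2$, so the common factor $\rho_0^{-1}$ cancels; choosing $\e_1$ with $C\e_1<c$ forces $\nabla\hat\Psi_L^+\equiv 0$, hence $\psi_L\geq\hat\psi_L$ in $\hat\Omega_L$. The companion inequality $\psi_L\leq\bar\psi_L$ in $\Omega_L$ follows in the same way. Together with the uniform bound $0\leq\bar\psi_L-\hat\psi_L\leq C\rho_0$ already established, this yields $-C\rho_0\leq\Psi_L\leq 0$ with $C$ independent of $L$ and $\e_1$ independent of both $L$ and $\rho_0$.

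With the uniform $L^\infty$ bound in hand the rest is parallel to Theorem \ref{th1}. The H\"older gradient estimate \cite[Theorem 12.4]{GT} and the near-corner estimate of \cite[Section 2.2]{DX}, applied to the problem \eqref{pbPsiL} for $\Psi_L$ (whose structure is insensitive to the sign of $u_0''$), give $\|\Psi_L\|_{C^{1,\beta}(\overline{\Omega_L})}\leq \msC\rho_0$ with $\msC$ and $\beta$ independent of $L$, as in Lemma \ref{lem36}. Since $|\nabla\psi_L|\leq|\nabla\bar\psi_L|+|\nabla\Psi_L|\leq\msC\rho_0$ while $\Sigma(\B_L(\psi_L;\rho_0))\geq C\rho_0^{(\gamma+1)/2}$, one obtains $|\nabla\psi_L|/\Sigma(\B_L(\psi_L;\rho_0))\leq \msC^\sharp\rho_0^{(1-\gamma)/2}$, which is $\leq 1/4$ once $\rho_0>\bar\rho_0$ for a threshold $\bar\rho_0$ depending only on $\msC^\sharp$ and thus independent of $L$. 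This gives \eqref{g4} and proves the proposition.

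I expect the main obstacle to be the absorption argument of the second paragraph, specifically verifying that the spatial weight produced from \eqref{a05} by the change of variables $\psi\mapsto\kappa_L(\psi)$ is exactly of the form covered by the weighted Poincar\'e inequality of Appendix \ref{secappend}, uniformly in $L$ and in the truncation parameter, so that the constant $C$ in the displayed estimate neither degenerates as $L\to\infty$ nor secretly depends on $\rho_0$. The matching of the powers of $\rho_0$ on the two sides of \eqref{sup3} is what makes $\e_1$ independent of the (large) incoming density, and checking this matching carefully is the heart of the modification.
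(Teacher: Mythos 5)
Your plan correctly isolates the obstruction (the zeroth-order coefficient $d_2$ no longer has a sign), and your final paragraph is fine, but the central absorption step is circular and does not close. The estimate you need,
\[
|d_2|\leq \frac{C\e}{\rho_0(1+x_2)^{k+2}},
\]
does not follow from \eqref{a05} alone: what \eqref{c28} and \eqref{a05} actually give is
\[
|d_2|\leq \frac{C\e}{\rho_0\left(1+\kappa_L(\hat\psi_{L,t};\rho_0)\right)^{k+2}},
\]
with the decay expressed in the streamline variable $\kappa_L(\hat\psi_{L,t};\rho_0)$, not in $x_2$. Converting one into the other requires a uniform bound $|\kappa_L(\hat\psi_{L,t};\rho_0)-x_2|\leq C$, which is equivalent (up to a factor $\rho_0$) to the uniform $L^\infty$ bound on $\psi_L-\bar\psi_L$ that the energy inequality is supposed to produce. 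Before that bound is known, the only available information is $0\leq\psi_L\leq m_L$, which allows $\hat\psi_{L,t}$ (for $t$ near $0$, where it is close to $\psi_L$) to be of order one at arbitrarily large heights, so the weight in your bound for $d_2$ need not decay in $x_2$ at all. Without a decaying weight the weighted Poincar\'e inequality of Appendix \ref{secappend} is unavailable, and the unweighted Poincar\'e constant on the strip $\hat\Omega_L$ grows like $(L-J)^2$, so $\iint |d_2|(\hat\Psi_L^+)^2$ cannot be absorbed into $c\rho_0^{-1}\iint|\nabla\hat\Psi_L^+|^2$ uniformly in $L$, however small $\e$ is. (The paper makes exactly this streamline-to-spatial conversion in \eqref{small}--\eqref{estud2}, but only \emph{after} the uniform bound \eqref{7uestPsiL} has been established, so there it is legitimate.)

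The idea missing from your proposal is how the paper breaks this circle. It first truncates the argument of the memory term around $\bar\psi_L$, replacing $\check{F}_L(\psi_L)$ by $\check{F}_L\left(\bar\psi_L+\bar S\,\zeta\left(\frac{\Psi}{\bar S}\right)\right)$ with $\bar S=2\rho_0(\bar U+1)$, so that the spatial decay $|\check{\mfW}_L|\leq C\e\,\rho_0^{-1}(1+x_2)^{-(k+1)}$ in \eqref{estmfW} holds by construction, with no a priori information on $\Psi$. For the resulting problem \eqref{sup4} the uniform bound is then obtained not by an energy method but by a comparison argument: the explicit barrier $\hat\phi=\e\rho_0(1+x_2)^{1-k}$, depending on $x_2$ only, is a supersolution whose constant $C_2$ is independent of $L$ and of the elliptic coefficients, and the comparison principle \cite[Theorem 10.1]{GT} yields $|\Psi_{L,N}|\leq\rho_0(\bar U+1)$ once $\e\leq 1/C_2$ (this fixes $\e_1=1/C_2$). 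Only after this bound is in hand---so that $|\Psi_L|\leq \bar S/2$ and the cutoff $\zeta(\Psi/\bar S)$ is inactive---does one recover a genuine solution of \eqref{Tbvp}; from there your concluding paragraph (H\"older gradient estimate, $|\nabla\psi_L|/\Sigma(\B_L)\leq\msC^\sharp\rho_0^{(1-\gamma)/2}\leq 1/4$ for $\rho_0$ large, and the maximum principle for $0\leq\psi_L\leq m_L$) goes through as you wrote it. If you insist on an energy-type argument in place of the barrier, you would still need to introduce a truncation of the memory term of this kind first; as written, your proof has a genuine gap.
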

\begin{proof}
First, let $\check{F}_L$ be the same as that in \eqref{defFL}.
Define
\begin{equation}
\check{\BB}_L(\Psi, x;\rho_0) =h(\rho_0) +\frac{\check{F}_L^2(\bar\psi_L +\bar S\zeta(\frac{\Psi}{\bar S});\rho_0)}{2}
\end{equation}
and
\begin{equation}
\check{\mfW}_L(\Psi, x;\rho_0) = \check{F}_L\left(\bar\psi_L +\bar S\zeta\left(\frac{\Psi}{\bar S}\right);\rho_0\right)\check{F}_L'\left(\bar\psi_L +\bar S\zeta\left(\frac{\Psi}{\bar S}\right);\rho_0\right)
\end{equation}
where $\zeta$ and $\bar\psi_L$ are defined in \eqref{defzeta} and \eqref{defhpsiL}, respectively,  and
\[
\bar S =2\rho_0 (\bar U+1)\quad \text{and}\quad \bar U = \sup_{x_1\in\R}\int^{f(x_1)}_0u_{0}(s)ds.
\]
If $x_2\geq \frac{2\bar S}{\rho_0 \min u_{0,L}}$, then
\begin{equation}
\frac{\rho_0 \min u_{0,L}}{2} x_2 \leq \bar\psi_L  +\bar S\zeta\left(\frac{\Psi}{\bar S}\right)\leq 2\rho_0\max u_{0,L} x_2.
\end{equation}
If $x_2\in \left(0,  \frac{2\bar S}{\rho_0 \min u_{0,L}}\right)$, then
\begin{equation}
 \bar\psi_L  +\bar S\zeta\left(\frac{\Psi}{\bar S}\right)\geq -\bar S.
\end{equation}
Therefore, it follows from \eqref{a05} that there exists a uniform constant $C$ such that
\begin{equation}\label{estmfW}
|\check{\mfW}_L(\Psi, x;\rho_0)|\leq \frac{C\e}{\rho_0(1+x_2)^{k+1}}.
\end{equation}
Set $\cfH_L(\nabla \Psi, \Psi, x;\rho_0)=\mclH
\left(\zeta^2\left(\frac{|\nabla (\bar\psi_L
+\Psi)|}{\Sigma(\check{\BB}_L(\Psi, x;\rho_0))}\right)
\Sigma^2(\check{\BB}_L(\Psi, x;\rho_0)),
\check{\BB}_L(\Psi;\rho_0)\right)$ and
$\Omega_{L,N}=\Omega_L\cap\{x\big| |x_1|\leq N\}$. We first study
the problem
\begin{equation}\label{sup4}
\left\{
\begin{aligned}
&\mfA_{L;ij}(\nabla \Psi, \Psi, x;\rho_0) \partial_{ij}\Psi = Q_L(\nabla \Psi, \Psi, x; \rho_0)\quad \text{in}\,\, \Omega_{L,N},\\
& \Psi=-\rho_0\int_0^{f(x_1)}u_{0,L}(s)ds\quad\text{on}\quad\Gamma\cap \partial\Omega_{L, N},\quad \Psi=0\quad\text{on}\quad\partial \Omega_{L,N}\setminus\Gamma,
\end{aligned}
\right.
\end{equation}
where
\begin{equation}
\begin{aligned}
\mfA_{L;ij}(\nabla \Psi, \Psi, x;\rho_0) =&\left( 1-\zeta^2\left(\frac{|\nabla(\Psi_L+\bar\psi_L)|}{\sqrt{\gamma}\cfH_L^{\frac{\gamma+1}{2}}(\nabla \Psi, \Psi, x;\rho_0)}\right)\right)\delta_{ij} \\
&+\zeta\left(\frac{\partial_i(\Psi_L+\bar\psi_L)}{\sqrt{\gamma}\cfH_L^{\frac{\gamma+1}{2}}(\nabla \Psi, \Psi, x;\rho_0)}\right)
\zeta\left(\frac{\partial_j(\Psi_L+\bar\psi_L)}{\sqrt{\gamma}\cfH_L^{\frac{\gamma+1}{2}}(\nabla \Psi, \Psi, x;\rho_0)}\right)
\end{aligned}
\end{equation}
and
\[
 Q_L(\nabla \Psi, \Psi, x; \rho_0)=\check{\mfW}_L(\Psi, x;\rho_0)\left(\cfH_L^2(\nabla \Psi, \Psi, x;\rho_0)-\rho_0^2 -\rho_0^2 \zeta^2\left(\frac{|\partial_1\Psi|}{\sqrt{\gamma}\cfH_L^{\frac{\gamma+1}{2}}(\nabla \Psi, \Psi, x;\rho_0)}\right)\right).
\]
It follows from \eqref{estmfW} that $Q_L$ satisfies
\be
|Q_L(\nabla \Psi, \Psi, x; \rho_0)|\leq  \f{C_1 \rho_0 \varepsilon}{(1+x_2)^{k+1}},
\ee
where $C_1$ is independent of $L$. The eigenvalues for the matrix $A_L$ are
\begin{equation}
\lambda_L = 1-\sum_{i=1}^2\zeta^2\left(\frac{|\partial_i(\Psi_L+\bar\psi_L)|}{\sqrt{\gamma}\cfH_L^{\frac{\gamma+1}{2}}(\nabla \Psi, \Psi, x;\rho_0)}\right)
\end{equation}
and
\begin{equation}
\Lambda_L = 1-\sum_{i=1}^2\zeta^2\left(\frac{|\partial_i(\Psi_L+\bar\psi_L)|}{\sqrt{\gamma}\cfH_L^{\frac{\gamma+1}{2}}(\nabla \Psi, \Psi, x;\rho_0)}\right) + \zeta^2\left(\frac{|\nabla(\Psi_L+\bar\psi_L)|}{\sqrt{\gamma}\cfH_L^{\frac{\gamma+1}{2}}(\nabla \Psi, \Psi, x;\rho_0)}\right).
\end{equation}
As usual,  $\Lambda_L/\lambda_L$ is called the elliptic ratio for the equation in \eqref{sup4}.

Let $\hat{\phi}=\f{\e\rho_0}{(1+x_2)^{k-1}}$. Obviously, one has
$\partial_{x_1}(\bar\psi+C\hat\phi)=0$ for any constant $C$.  Hence
direct computations give
\[
\mfA_{L; ij}(\nabla(C_2\hat\phi), \Psi, x; \rho_0)\p_{ij}(C_2\hat{\phi})=C_2 \partial_{x_2x_2} \left(\f{\e\rho_0}{(1+x_2)^{k-1}}\right)= \frac{C_2 k(k-1)\rho_0\e}{(1+x_2)^{k+1}}.
\]
Therefore, choosing $C_2$ sufficiently large yields
that
$$\left\{
\ba{ll}
\mfA_{L;ij}(\nabla(C_2\hat\phi), \Psi, x; \rho_0)\p_{ij}(C_2\hat{\phi})\geq Q_L= \mfA_{L;ij}(\nabla \Psi, \Psi, x;\rho_0)\partial_{ij}\Psi_{L,N}\quad\quad&\text{in}\quad \O_{L,N},\\
C_2\hat\phi \geq 0\geq -\rho_0\int_0^{f(x_1)}u_{0,L}(s)ds=\Psi_{L,N}\quad&\text{on}\quad \Gamma\cap \p\O_{L,N},\\
C_2\hat\phi \geq 0=\Psi_{L,N}&\text{on}\quad\partial\Omega_{L,N}\setminus\Gamma.
\ea\right.
$$
Therefore, the comparison principle for nonlinear elliptic equations \cite[Theorem 10.1]{GT} implies that
$$\Psi_{L,N}\leq C_2\hat \phi\leq C_2\frac{\rho_0 \varepsilon}{(1+x_2)^{k-1}}\quad\quad\text{in}\quad \O_L.$$
One of our key observation is that the constant $C_2$ depends
neither on $L$ nor on the elliptic coefficients. Note also that
$C_2$ does not depend on the elliptic ratio $\Lambda_L/\lambda_L$.

Similarly, one has $$\Psi_{L,N}\geq -\rho_0\bar U -C_2\frac{\rho_0 \varepsilon}{(1+x_2)^{k-1}}.$$ Therefore, choosing $\e\leq \f1{C_2}$ yields that for sufficiently large $L$, the following estimate holds
$$-\bar{U}-\rho_0<\Psi_{L,N}<\rho_0\quad\quad\text{in}\quad \O_L.$$
Thus
we have the following uniform $L^{\infty}$-norm estimate
\be\label{sc15}
|\Psi_{L,N}|\leq \rho_0\left(\bar U+1\right).
\ee

Next, similar to the proof for Lemma \ref{lem36} in Subsection 3.2,
it follows from the H\"{o}lder gradient estimate \cite[Theorem
12.4]{GT} and the estimate near the corners \cite[Section 2.2]{DX}
for the elliptic equations that  one has  the following global
estimate \be\label{sc9} \|\Psi_{L, 2N}\|_{1,\beta;\O_{L, N}}\leq
\msC \rho_0, \ee where $\msC$ depends on the elliptic coefficients
but is independent of $L$. Taking limit for $N\to \infty$, one gets
that there exists a subsequence of $\{\Psi_{L,N}\}$ converging to
$\Psi_L$ which satisfies \be\label{7uestPsiL} |\Psi_{L}|\leq
\rho_0\left(\bar U+1\right)\quad \text{and}\quad
\|\Psi_{L}\|_{1,\beta;\O_{L}}\leq \msC \rho_0. \ee If $\rho_0$ is
sufficiently large, then
\begin{equation}
\frac{|\nabla (\Psi_L+\bar \psi_L)|}{\Sigma(\check{\BB}_L)}  \leq \frac{|\nabla \Psi_L|+|\nabla \bar\psi_L|}{\Sigma(\check{\BB}_L)}\leq \msC^{\sharp}\rho_0^{\frac{1-\gamma}{2}}\leq \frac{1}{4}.
\end{equation}
Therefore, $\psi_L=\bar\psi_L+\Psi_L$ solves the problem \eqref{Tbvp}. Since $\cW_L\leq 0$ if $\psi_L\geq m_L$ and $\cW_L\geq 0$ if $\psi_L\leq 0$,  it follows from the maximum principle that
\begin{equation}
0\leq \psi_L\leq m_L.
\end{equation}
We now choose $\e_1=\frac{1}{C_2}$, then the proposition is proved.
\end{proof}

Furthermore, we also have the following uniform integral estimate.
\begin{lemma}
For any $k>1$, there exists a constant $\e_0>0$ independent of $L$ such that if $u_0(x_2)$ satisfies the conditions
\eqref{a04}-\eqref{a05} with $\e\in (0, \e_0)$,
and $\psi_L$ is a subsonic solution of the problem \eqref{Tbvp}, then we have
\begin{equation}\label{7psiLfar}
|(\psi_L-\bar \psi_L)(x_1, x_2)|\to 0\quad \text{uniformly with respect to } x_2\text{ as } |x_1|\to\infty
\end{equation}
and
 \be\label{7L2integral}
\|\nabla(\psi_L-\bar\psi_L)\|_{L^2(\O_L)}\leq \msC, \ee where the constant $\msC$ depends on the elliptic coefficients and is independent of $L$.
\end{lemma}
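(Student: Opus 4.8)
The plan is to follow the energy argument already carried out for Lemma~\ref{lem37} in the convex case, the only essential difference being that the sign condition $d_2\geq 0$, which there came from $u_0''\geq 0$ and allowed the lower-order term to be discarded, is no longer available; it will be replaced by the smallness hypothesis \eqref{a05} together with the weighted Poincar\'e inequality of Appendix~\ref{secappend}.

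For the far-field statement \eqref{7psiLfar} I would argue as in \cite[Lemma 4.1]{DXX} and \cite[Proposition 5]{XX3}. Writing $\Psi_L=\psi_L-\bar\psi_L$ and using the uniform bound $\|\Psi_L\|_{C^{1,\beta}(\overline{\O_L})}\leq\msC\rho_0$ from \eqref{7uestPsiL}, translate $\Psi_L$ along a sequence $x_1^{(n)}\to\pm\infty$ and extract a locally uniform limit; since the forcing $\check{\mfW}_L$ and the boundary data on $\Gamma$ are supported in the bump region $x_1\in[0,1]$, the limit solves a homogeneous problem on the strip and must vanish, so $|\Psi_L|\to 0$ uniformly in $x_2$ as $|x_1|\to\infty$. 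The barrier $\hat\phi=\e\rho_0(1+x_2)^{-(k-1)}$ from the preceding proposition additionally gives $\Psi_L=O((1+x_2)^{-(k-1)})$, which guarantees convergence of the weighted integrals below.

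For \eqref{7L2integral}, $\Psi_L$ solves the divergence-form problem \eqref{L2-2} (with $H$ replaced by $H_L$) and coefficients \eqref{c25}--\eqref{c28}. I would multiply by $\eta^2\Psi_L$ with the cut-off $\eta$ of \eqref{e9}, integrate over $\O_L$, and complete the square as in \eqref{c14} so that the principal part yields the coercive quantity $\int_0^1 H_L^{-1}\big(|\nabla\Psi_L|^2+(\cdots)^2/(\gamma H_L^{\gamma+1}-|\nabla\bar\psi_{L,t}|^2)\big)\,dt$, bounded below by $c_0\rho_0^{-1}|\nabla\Psi_L|^2$ since the flow is uniformly subsonic and $H_L\sim\rho_0$. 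The genuinely new term is $\iint\eta^2 d_2\Psi_L^2$ from \eqref{c28}: by \eqref{e8} one has $d_2=\int_0^1 H_L\,u_{0,L}''(\kappa_L(\bar\psi_{L,t}))/(\rho_0^2 u_{0,L})\,dt$, so \eqref{a05} and $u_{0,L}\geq\bar u/2$ give $|d_2|\leq C\e\rho_0^{-1}(1+x_2)^{-(k+2)}$. Applying the weighted Poincar\'e inequality of Appendix~\ref{secappend} to $\eta\Psi_L$ (which vanishes on $\Gamma_L$), namely $\iint(1+x_2)^{-(k+2)}(\eta\Psi_L)^2\leq C_P\iint|\nabla(\eta\Psi_L)|^2$ with $C_P$ independent of $L$, I can estimate $\iint\eta^2|d_2|\Psi_L^2\leq C\e C_P\rho_0^{-1}\iint|\nabla\Psi_L|^2$ up to a remainder supported in the annulus $A^\O_{R,2R}(0)$. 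Because the bad term and the coercive term carry the same factor $\rho_0^{-1}$, choosing $\e<\e_0$ with $\e_0$ depending only on $c_0$ and $C_P$ (hence independent of $L$) absorbs it into the left-hand side.

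The remaining boundary contributions are treated exactly as in Lemma~\ref{lem37}: the integral over $\Gamma$ reduces to $\int_{\R}|\bar\psi_L(f(x_1))|\,dx_1\leq C\rho_0$, finite by the compact support of $f$, while the annulus remainder and the lateral terms vanish as $R\to\infty$ by the far-field decay \eqref{7psiLfar} and $\nabla\Psi_L\in L^2$. Letting $R\to\infty$ then gives $\iint_{\O_L}|\nabla\Psi_L|^2\leq\msC$ with $\msC$ depending on the elliptic coefficients but not on $L$, which is \eqref{7L2integral}. I expect the heart of the matter to be the control of the indefinite term $\iint d_2\Psi_L^2$: everything rests on the weighted Poincar\'e inequality holding with a constant $C_P$ that does not blow up as $L\to\infty$ and on the two $\rho_0^{-1}$ factors matching, so that the threshold $\e_0$ can be fixed uniformly in $L$.
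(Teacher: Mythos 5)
Your overall strategy is the same as the paper's: write the energy identity for $\Psi_L=\psi_L-\bar\psi_L$, bound the indefinite lower-order coefficient by $|d_2|\leq C\e\rho_0^{-1}(1+x_2)^{-(k+2)}$ using \eqref{a05} and the uniform bound on $|\kappa_L(\psi_L;\rho_0)-x_2|$, and absorb the resulting weighted term via the Poincar\'e inequality of Lemma \ref{lemA1}, noting that the coercive term and the bad term carry the same factor $\rho_0^{-1}$ so that the threshold $\e_0$ is independent of $L$ and of the elliptic coefficients. That last observation is exactly the paper's key point. However, there is a genuine error in your Poincar\'e step. You claim
\begin{equation*}
\iint (1+x_2)^{-(k+2)}(\eta\Psi_L)^2\,dx_1dx_2\leq C_P\iint|\nabla(\eta\Psi_L)|^2\,dx_1dx_2
\end{equation*}
``since $\eta\Psi_L$ vanishes on $\Gamma_L$.'' This is backwards. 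Lemma \ref{lemA1} is a one-dimensional inequality in $x_2$ whose boundary term sits at the \emph{lower} endpoint of the interval, where the weight $(1+x_2)^{-(k+2)}$ is largest; the contribution from the upper endpoint already has a favorable sign in its proof and is discarded for free. Vanishing at the top $\Gamma_L$ therefore buys you nothing, while at the bottom, on the wall $\Gamma$, one has $\eta\Psi_L=-\eta\,\bar\psi_L(f(x_1))\neq 0$ for $x_1\in(0,1)$. Moreover, the boundary-term-free inequality you wrote is simply false with an $L$-independent constant: take $g(x_2)=1-x_2/L$ on $(0,L)$, which vanishes at the top; then $\int_0^L g^2(1+x_2)^{-(k+2)}dx_2$ stays bounded away from $0$ while $\int_0^L(g')^2dx_2=1/L\to 0$.

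The gap is fixable, and the fix is what the paper does: keep the lower-endpoint term, which gives the extra contribution
\begin{equation*}
\frac{2}{k+1}\int_{\R}|\eta\Psi_L(x_1,f(x_1))|^2\,dx_1\leq \frac{2}{k+1}\int_0^1|\bar\psi_L(f(x_1))|^2\,dx_1\leq C\rho_0^2,
\end{equation*}
uniformly bounded in $L$ and $R$ because the wall data is supported in $x_1\in[0,1]$; this is precisely the term $\frac{C}{k+1}\int_{\R}|\Psi_L(x_1,f(x_1))|^2dx_1$ appearing in \eqref{sL}. With that term restored, your absorption argument goes through and the conclusion \eqref{7L2integral} follows as in the paper (your ball cut-off $\eta$ versus the paper's slab truncation at $|x_1|=N$ is immaterial: for fixed $L$ the annulus remainders vanish as $R\to\infty$ by \eqref{7psiLfar} and the uniform $C^{1,\beta}$ bound \eqref{7uestPsiL} alone -- do not invoke ``$\nabla\Psi_L\in L^2$'' there, as that is what you are trying to prove). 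Two further cautions on your far-field paragraph: the forcing $Q_L$ is \emph{not} supported in the bump region (it vanishes at the background state $\Psi\equiv 0$ but depends on $x_2$ and $\Psi$ throughout $\O_L$), so the translated limit solves a nonlinear problem on the flat strip whose triviality requires a uniqueness argument -- which in the non-convex case again rests on the smallness of $\e$; the paper avoids spelling this out by citing \cite[Proposition 4]{XX3}. Also, the barrier of the preceding proposition yields only a one-sided decay $\Psi_L\leq C_2\e\rho_0(1+x_2)^{1-k}$ (the lower barrier contains the non-decaying constant $-\rho_0\bar U$), but no decay of $\Psi_L$ in $x_2$ is actually needed for the weighted integrals to converge, since the weight itself is integrable.
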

\begin{proof}
The far field behavior \eqref{7psiLfar} follows from  \cite[Proposition 4]{XX3}.

In fact, it follows from \eqref{L2-0} that \be\label{sL2-0}
\ba{lll}
 &\iint_{\O_{L,N}}\left(a_{ij}\partial_i\Psi_L\partial_j\Psi_L+2b_i\Psi_L\partial_i\Psi_L+d_1\Psi^2_L\right)dx_1dx_2\\
\leq& \msC(\rho_0)\left(\int_{x_1\in\mathbb{R}}|\bar{\psi}_L(f(x_1))|dx_1+\int_{f(N)}^{L}|\Psi_L(N,x_2)|dx_2+\int_{f(-N)}^{L}|\Psi_L(-N,x_2)|dx_2\right)\\
&-\iint_{\O_{L,N}}d_2\Psi^2_Ldx_1dx_2. \ea \ee  Although the sign of
the integral term $d_2$ is not clear without  the assumption
$u_{0,L}''(x_2)\geq 0$,  the following estimate holds
 \be\label{small}
 \begin{aligned}
\left|d_2\left(\nabla\psi_{L},\psi_{L}; \nabla\bar\psi_{L},\bar\psi_{L}\right)\right|= &\left|\int_0^1\f{H(|\nabla\bar\psi_{L,t}|^2, \bar\psi_{L,t})
u''_{0,L}\left(\kappa(\bar\psi_{L,t};\rho_0)\right)}{\rho_0^2u_{0,L}\left(\kappa_L(\bar\psi_{L,t};\rho_0)\right)}dt\right|\\
\leq &
\f{C\e}{\rho_0\left(1+\kappa_L(\bar\psi_{L,t};\rho_0)\right)^{k+2}}.
\end{aligned}
\ee
Since
$$|\Psi_L|=|\psi_L-\bar{\psi}_L|=\left|\int^{\kappa_L(\psi_L;\rho_0)-x_2}_0\rho_0u_{0,L}(s)ds\right|=\rho_0 \max u_{0,L}|\kappa_L(\psi_L;\rho_0)-x_2|,$$
it follows from \eqref{7uestPsiL} that
$$|\kappa_L(\psi_L;\rho_0)-x_2|\leq \frac{\bar U+1}{\max u_0}.$$
Therefore,
\begin{equation}
-\frac{\bar U+1}{\max u_0}+x_2\leq \kappa(\bar\psi_{L,t};\rho_0)\leq \frac{\bar U+1}{\max u_0}+x_2.
\end{equation}
Thus
\begin{equation}\label{estud2}
|u_{0,L}''(\kappa(\bar\psi_{L,t};\rho_0))|\leq \frac{C\e}{(1+x_2)^{k+2}}.
\end{equation}
This, together with \eqref{small} implies that \be\label{small2}
\left|d_2\left(\nabla\psi_{L},\psi_{L}; \nabla\bar\psi_{L},\bar\psi_{L}\right)\right|\leq
\f{C \e}{\rho_0(1+x_2)^{k+2}}. \ee Therefore, combining
\eqref{small2}, \eqref{L2-0}, and \eqref{c14} together yields that for
any $N>0$, one has
$$\ba{ll}&\iint_{\O_{L,N}}|\nabla\Psi_L|^2dx_1dx_2\\
\leq&
\msC(\rho_0)\left(\int_{\mathbb{R}}|\bar{\Psi}_L(f(x_1))|dx_1+\int_{f(N)}^{L}|\Psi_L(N,x_2)|dx_2+\int_{f(-N)}^{L}|\Psi_L(-N,x_2)|dx_2\right)\\
&+C\iint_{\O_{L,N}}\f{\varepsilon\Psi^2_L}{(1+x_2)^{k+2}}dx_1dx_2.\ea$$
Noting that $k>1$ so that we apply the weighted Poincar\'{e} inequality in Lemma \ref{lemA1} in Appendix for the last term in \eqref{sL}  to get
$$
\ba{ll}
&\iint_{\O_{L, N}}|\nabla\Psi_L|^2dx_1dx_2\\
\leq&
\msC(\rho_0)\left(\int_{\mathbb{R}}|\bar{\psi}_L(x_1, f(x_1))|dx_1+\int_{f(N)}^{L}|\Psi_L(N,x_2)|dx_2+\int_{f(-N)}^{L}|\Psi_L(-N,x_2)|dx_2\right) \\ &+\f{C}{k+1}\int_\R|\Psi_L(x_1,f(x_1))|^2dx_1 +\f{4C_3\e}{(k+1)^2}\iint_{\O_{L,N}}\left|\nabla\Psi_L\right|^2 dx_1dx_2.\ea
$$
Choosing $\e_0 \leq \min\left(\e_1, \f{(k+1)^2}{8C_3}\right)$ yields
\begin{equation}
\begin{aligned}
&\iint_{\O_{L}}|\nabla\Psi_L|^2dx_1dx_2 \\
\leq&
\msC(\rho_0)\left(\int_{\mathbb{R}}|\bar{\psi}_L(x_1, f(x_1))|dx_1+\int_{f(N)}^{L}|\Psi_L(N,x_2)|dx_2+\int_{f(-N)}^{L}|\Psi_L(-N,x_2)|dx_2\right)\\ &+\f2{k+1}\int_\R|\Psi_L(x_1,f(x_1))|^2dx_1.
\end{aligned}
\end{equation}
 Taking limit $N\rightarrow\infty$ and using \eqref{7psiLfar} show
\be\label{sL}
\int_{\O_{L}}|\nabla\Psi_L|^2dx_1dx_2\leq
\msC(\rho_0)\int_{\mathbb{R}}|\bar{\psi}_L(f(x_1))|dx_1 +\f{C}{k+1}\int_\R|\Psi_L(x_1,f(x_1))|^2dx_1. \ee
This finishes the proof of the lemma.
\end{proof}

{\bf Step 2. Existence of subsonic solutions and their fine properties. } The subsonic solution $\Psi$ on $\Omega$ is then obtained as a limit of $\{\Psi_L\}$.

\begin{prop}
Suppose that $u_0(x_2)$ satisfies the conditions
\eqref{a04}-\eqref{a05}, then there exists a $\bar
\rho_0\geq\rho_0^*$, such that if $\rho_0>\bar\rho_0$, there exists
a solution $\Psi\in C^{2,\a}(\O)\cap C^{1,\b}(\bar\O)$  satisfying
\be\label{sc14}
\Psi+\bar\psi\geq 0 \quad \text{and}\quad
\left|\nabla(\Psi+\bar\psi)\right|<\frac{1}{4}\Sigma(\Psi+\bar\psi;\rho_0).
\ee
\end{prop}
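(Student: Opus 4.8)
The plan is to produce $\Psi$ on the unbounded domain $\O$ as a limit of the truncated solutions $\Psi_L = \psi_L - \bar\psi_L$ constructed in Step 1, following verbatim the strategy used at the end of Section \ref{secexistence} to deduce Proposition \ref{prop31}. Indeed, the proposition and lemma of Step 1 provide, for $\rho_0 > \bar\rho_0$ and $\e \in (0, \e_0)$, a family $\{\psi_L\}$ solving the truncated problem \eqref{Tbvp} and obeying the $L$-independent bounds \eqref{7uestPsiL}, the uniform subsonicity \eqref{g4}, and the uniform integral estimate \eqref{7L2integral}. These are exactly the ingredients needed to pass to the limit $L \to \infty$, and the heart of the matter has therefore already been done; what remains is a compactness argument together with careful bookkeeping of the $L$-dependent data.

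First I would fix an exhaustion $\O = \bigcup_k K_k$ by compact sets and regard each $\Psi_L$ on $\O_L \supset K_k$ for $L$ large. Since $\|\Psi_L\|_{C^{1,\beta}(\bar\O_L)} \le \msC\rho_0$ uniformly in $L$ by \eqref{sc9}, the Arzel\`a--Ascoli theorem and a diagonal argument over $\{K_k\}$ extract a subsequence, still denoted $\{\Psi_L\}$, converging in $C^1_{\mathrm{loc}}(\bar\O)$ to some $\Psi \in C^{1,\beta}(\bar\O)$ inheriting the bound $\|\Psi\|_{C^{1,\beta}(\bar\O)} \le \msC\rho_0$.

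The next step is to confirm that $\psi := \Psi + \bar\psi$ solves \eqref{b01}. The only genuinely $L$-dependent point is the convergence of the truncated data: because $g_L(x_2) = u_0'(x_2)$ for $x_2 \le L-1$, one has $u_{0,L} \to u_0$, and hence $F_L \to F$, $\B_L \to \B$, $H_L \to H$ and $\bar\psi_L \to \bar\psi$, all uniformly on compact sets as $L \to \infty$. Writing \eqref{Tbvp} in the divergence form $\Div(\g\psi_L / H_L) = W_L H_L$ and testing against $\varphi \in C_c^\infty(\O)$, the locally uniform convergence of $\g\psi_L$, $H_L$ and the right-hand side lets me pass to the limit in the weak formulation, so $\psi$ is a weak solution of \eqref{b01}. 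Uniform ellipticity on each compact set (guaranteed by the strict subsonicity below) and interior Schauder estimates then upgrade $\Psi$ to $C^{2,\alpha}(\O)$, while the boundary regularity up to $\bar\O$, including near the corners $P_1, P_2$, is carried over from \eqref{sc9}.

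Finally I would verify \eqref{sc14}. Passing $0 \le \psi_L$ and the subsonic bound \eqref{g4} to the limit gives $\Psi + \bar\psi = \psi \ge 0$ and $|\g(\Psi+\bar\psi)| \le \tfrac14 \Sigma(\B(\psi;\rho_0))$; as in Step 1 the ratio is in fact controlled by $\msC^\sharp \rho_0^{(1-\gamma)/2}$, so after enlarging $\bar\rho_0$ if necessary this is strictly below $\tfrac14$, yielding the strict inequality in \eqref{sc14}. For the integral bound, on each $K_k$ one has $\iint_{K_k} |\g\Psi|^2 = \lim_{L} \iint_{K_k} |\g\Psi_L|^2 \le \msC$ by \eqref{7L2integral}, and letting $K_k \uparrow \O$ gives $\|\g\Psi\|_{L^2(\O)} \le \msC$, the uniform estimate needed later for the asymptotic behavior and the uniqueness. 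The main obstacle is precisely the one already resolved in Step 1, namely securing the $C^{1,\beta}$, subsonic, and $L^2$-gradient bounds uniformly in $L$; once these are in hand the unbounded-domain limit is routine.
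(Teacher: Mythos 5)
Your proposal is correct and follows essentially the same route as the paper: the paper's own proof likewise obtains $\Psi$ by letting $L\to\infty$ in the truncated solutions of Step 1, using the uniform $C^{1,\beta}$ and subsonicity bounds to conclude that for $\rho_0$ large the ratio $|\nabla(\bar\psi+\Psi)|/\Sigma(\B(\bar\psi+\Psi))$ stays below $1/4$, so the subsonic truncation can be removed. Your write-up simply makes explicit the compactness, coefficient-convergence, and Schauder bookkeeping that the paper leaves implicit.
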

\begin{proof}
Taking the limit $L\rightarrow\infty$ gives
\begin{equation}
|\nabla (\bar\psi+\Psi)|\leq C\rho_0.
\end{equation}
Hence, if $\rho_0$ is sufficiently large, one has
\begin{equation}
\frac{|\nabla(\bar\psi+\Psi)|}{\Sigma(\B(\bar\psi+\Psi))}\leq 1/4.
\end{equation}
This shows that the subsonic truncation can be removed. So the proof of the proposition is finished.
\end{proof}

Furthermore, it follows from the estimate \eqref{7L2integral} that
\be\label{sL2-3} \|\nabla\Psi\|_{L^2(\mathbb{R}^2_+)}\leq C. \ee
Combining \eqref{sL2-3} with the H\"{o}lder gradient estimate yields
the asymptotic behavior of the flows. Using the same idea in
Subsection 4.2, one can prove that the flows in $\Omega$ have
positive horizontal velocity except at the corner points.

{\bf Step 3. The uniqueness of solution.} In order to prove the
uniqueness, we also study the problem for the difference of two
solutions. The same arguments in Section 5 give \eqref{e6}. Now the
key task is to estimate the following term
$$\iint_{\O}\eta^2\int^1_0H(|\nabla\psi_{(1+t)}|^2, \psi_{(1+t)})\phi^2\left(F(\psi_{(1+t)})F''(\psi_{(1+t)})+(F'(\psi_{(1+t)}))^2\right)dtdx_1dx_2,$$
where
$\eta$ is the smooth cut-off function with \eqref{e9}, $\phi$ is the difference of the two subsonic solutions $\psi_1$ and $\psi_2$, and $\psi_{(1+t)}=(1-t)\psi_1+t\psi_2$ for $t\in[0,1]$.

Note that
$$\left|F(\psi_{(1+t)})F''(\psi_{(1+t)})+\left(F'(\psi_{(1+t)})\right)^2\right|
=\left|\f{u''_0\left(\kappa(\psi_{(1+t)};\rho_0)\right)}{\rho^2_0u_0\left(\kappa(\psi_{(1+t)};\rho_0)\right)}\right|.
$$
The same argument as for \eqref{estud2} gives
\begin{equation}
u''_0\left(\kappa(\psi_{(1+t)};\rho_0)\right)\leq \f{C\e}{\rho_0^2(1+x_2)^{k+2}},
\end{equation}
where $C$ does not depend on $L$ or elliptic coefficients.
Therefore, \be\label{e012} \ba{rl}
& \iint_{B^\O_{2R}(0)}\eta^2\int^1_0H(|\nabla\psi_{(1+t)}|^2, \psi_{(1+t)})\phi^2\left(F(\psi_{(1+t)})F''(\psi_{(1+t)})+\left(F'(\psi_{(1+t)})\right)^2\right)dt dx_1dx_2\\
 \leq & C\e\iint_{\O} \f{\eta^2\phi^2}{(1+x_2)^{k+2}}dx_1dx_2.
\ea
\ee
Noting that $\phi=0$ on $\Gamma$ and
using the weighted Poincar\'e inequality in Lemma \ref{lemA1} yield
\be\label{7e012}
\ba{rl}
 & \iint_{B^\O_{2R}(0)}\eta^2\int^1_0H(|\nabla\psi_{(1+t)}|^2, \psi_{(1+t)})\phi^2\left(F(\psi_{(1+t)})F''(\psi_{(1+t)})+\left(F'(\psi_{(1+t)})\right)^2\right)dt dx_1dx_2\\
\leq &C\e\iint_{\O} \eta^2 |\nabla\phi|^2 + |\nabla\eta|^2 \phi^2 dx_1dx_2\\
\leq &C\e\iint_{B_{2R}^\O(0)} \eta^2|\nabla\phi|^2 dx_1dx_2 +
C\e\iint_{A^\O_{R,2R}(0)} |\nabla\eta|^2 dx_1dx_2. \ea \ee
Hence, substituting \eqref{7e012} into \eqref{e6} and applying the
same technique in  \eqref{e11} and \eqref{5tech} give that
\be\label{se6} \ba{rl}
&\iint_{B^\O_{2R}(0)} \int_0^1 \left|F(\psi_{(1+t)})F'(\psi_{(1+t)})\eta\phi\right|^2 dt dx_1dx_2\\
\leq &\msC\iint_{B_{2R}^\O(0)}|\nabla\phi|^2 dx_1dx_2 + \msC\delta\iint_{B_{2R}^\O(0)}\eta^2\left(\int_0^1F(\psi_{(1+t)})F'(\psi_{(1+t)})\phi dt\right)^2dx_1dx_2\\
&+\msC(\d)\iint_{A_{R,2R}^\O(0)}|\nabla\eta|^2dx_1dx_2,\ea \ee Choosing
suitable small $\delta > 0$ yields \be\label{se6} \ba{rl}
&\iint_{B^\O_{R}(0)} \int_0^1 \left|F(\psi_{(1+t)})F'(\psi_{(1+t)})\phi\right|^2dsdx_1dx_2\\
\leq &\msC\iint_{B_{2R}^\O(0)}|\nabla\phi|^2 dx_1dx_2
+\msC(\d)\iint_{A_{R,2R}^\O(0)}|\nabla\eta|^2dx_1dx_2.\ea \ee
Hence we obtain the key uniform estimate
$$
\left\|\phi\int_0^1\left|F(\psi_{(1+t)})F'(\psi_{(1+t)})\right| dt\right\|_{L^2(\O)}\leq \msC.
$$
This is exactly the same as \eqref{e12}. Similarly, one can prove \eqref{e7} which implies $\phi=0$ in $\Omega$. Hence the uniqueness for subsonic solution is proved.

{\bf Step 4. Existence of critical value $\rho_{cr}$ for the density  in the upstream.} Note that the choice of $\e_1$ and $\e_0$ does not depend on the elliptic coefficients, so the proof for the existence of critical value for the incoming density in the upstream  is similar to  the one in Section 6.
\hfill$\Box$

\section{Limit of subsonic flows}\label{seclimit}
In this section, we prove Theorem \ref{thmlimit}. Given a sequence of $\{M_{0}^{(n)}\}$ converging to $\rho_{cr}$, the assoicated subsonic flows $\{(\rho_n, u_n, v_n)\}$ satisfy the Euler system \eqref{a0} and the following three conditions
\begin{enumerate}
\item the Mach number of the flows $\frac{\sqrt{u_n^2+v_n^2}}{\sqrt{\gamma \rho_n^{\gamma-1}}}\leq 1$ a.e. in $\Omega$;
\item the Bernoulli functions of the flows $\frac{u_n^2+v_n^2}{2}+h(\rho_n)$ are uniformly bounded above and below;
\item the vorticities of the flows $\partial_{x_2}u_n -\partial_{x_1}v_n$ are uniformly bounded measures.
\end{enumerate}
Hence, using Theorem 2.2 in \cite{CHW} yields that there exists a subsequence still labelled by $\{(\rho_n, u_n, v_n)\}$ converging to $(\rho, u, v)$ a.e. in $\Omega$. Thus $(\rho, u, v)$ also solves the Euler system \eqref{a0} in the weak sense and the boundary condition \eqref{a01} in the sense of normal trace. This finishes the proof of Theorem \ref{thmlimit}.


\appendix

\section{The weighted Poincar\'{e} inequality}\label{secappend}
In this appendix, we give a weighted Poincar\'{e} inequality and its proof, which is used in Section 7.
\begin{lemma}\label{lemA1}
Let $I=(a, b)$ with $a\geq 0$ and $b$ could be infinity. If
$g=g(s)\in H^1(I)$, then for any $l>2$, it holds that \be\label{po}
\int_I\f{g^2(s)}{(s+1)^l}ds\leq
\frac{2g^2(a)}{l-1}+\f{4}{(l-1)^2}\int_I (g'(s))^2ds. \ee
\end{lemma}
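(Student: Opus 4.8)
The plan is to establish \eqref{po} by integrating by parts against the antiderivative of the singular weight, then closing the estimate with Young's inequality and an absorption argument. The starting point is the elementary identity
\[
\frac{1}{(s+1)^l}=-\frac{d}{ds}\left(\frac{1}{(l-1)(s+1)^{l-1}}\right),
\]
which is exactly where the factors $(l-1)^{-1}$ and $(l-1)^{-2}$ in \eqref{po} originate. Since $g\in H^1(I)$ and $I$ is an interval in $\R$, the Sobolev embedding $H^1(I)\hookrightarrow C(\bar I)$ makes the pointwise value $g(a)$ meaningful, so the right-hand side of \eqref{po} is well defined.

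First I would work on a truncated interval $(a,R)$ with $a<R<b$ and integrate by parts, obtaining
\[
\int_a^R\frac{g^2}{(s+1)^l}\,ds
=\frac{g^2(a)}{(l-1)(a+1)^{l-1}}-\frac{g^2(R)}{(l-1)(R+1)^{l-1}}
+\frac{2}{l-1}\int_a^R\frac{g\,g'}{(s+1)^{l-1}}\,ds.
\]
The boundary term at $R$ is nonpositive and may simply be dropped, and the boundary term at $a$ is bounded by $g^2(a)/(l-1)$ because $(a+1)^{l-1}\geq 1$ for $a\geq 0$. For the cross term I would split the weight as $(s+1)^{l-1}=(s+1)^{l/2}(s+1)^{(l-2)/2}$ and apply Young's inequality in the form $\frac{2}{l-1}|A||B|\leq \frac{1}{2}A^2+\frac{2}{(l-1)^2}B^2$ with $A=g/(s+1)^{l/2}$ and $B=g'/(s+1)^{(l-2)/2}$; the weight $\alpha=(l-1)/2$ is chosen precisely so that the $A^2$ coefficient is $\tfrac12$. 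Here the hypothesis $l>2$ enters crucially, since it guarantees $(s+1)^{-(l-2)}\leq 1$ on $I$ and hence $\int_a^R B^2\,ds\leq\int_I (g')^2\,ds$. This yields
\[
\int_a^R\frac{g^2}{(s+1)^l}\,ds\leq \frac{g^2(a)}{l-1}
+\frac12\int_a^R\frac{g^2}{(s+1)^l}\,ds+\frac{2}{(l-1)^2}\int_I (g')^2\,ds,
\]
and absorbing the half-term into the left side and multiplying by $2$ gives the bound on $(a,R)$ uniformly in $R$.

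The one genuine technical point is passing to the limit $R\to b$, which is delicate only when $b=+\infty$. I would handle this by monotone convergence: the integrand $g^2/(s+1)^l$ is nonnegative, so $\int_a^R\frac{g^2}{(s+1)^l}\,ds\uparrow\int_I\frac{g^2}{(s+1)^l}\,ds$, while the right-hand side of the truncated estimate is already bounded by the $R$-independent quantity $\frac{2g^2(a)}{l-1}+\frac{4}{(l-1)^2}\int_I (g')^2\,ds$. Because the discarded right-endpoint boundary term is nonpositive, no positive contribution is lost in the limit, and one obtains \eqref{po} directly, with no need to establish decay of $g$ at infinity. This truncation strategy is what keeps the argument clean; carrying out the integration by parts formally on all of $I$ at once would instead force one to justify vanishing of the boundary term at $b$, which the monotone-convergence route circumvents entirely.
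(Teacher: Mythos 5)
Your proof is correct and follows essentially the same route as the paper's: integration by parts against the antiderivative of $(s+1)^{-l}$, a weighted Young/Cauchy--Schwarz step calibrated so that the $g^2$-term can be absorbed with coefficient $\tfrac12$ (your choice of weights is exactly the paper's choice $c=\tfrac{l-1}{2}$), and the hypothesis $l>2$ used to drop the weight $(s+1)^{-(l-2)}$ on the $(g')^2$-term. The only difference is cosmetic: you truncate at $R$ and pass to the limit by monotone convergence, whereas the paper integrates by parts directly on $I$ and discards the nonpositive boundary term at $s=b$; your version is slightly more careful when $b=+\infty$ but is not a different argument.
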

\begin{proof} Integration by parts and using the Cauchy-Schwartz inequality give
\begin{equation*}
\begin{array}{ll}
&\int_I\f{g^2(s)}{(1+s)^l}ds\\
=&\f{1}{1-l}\left[\left.\f{g^2(s)}{(1+s)^{l-1}}\right|^{s=b}_{s=a}-2\int_Ig(s)g'(s)(1+s)^{1-l}ds\right]\\
\leq &\f{1}{l-1}\left[\f{g^2(a)}{(1+a)^{l-1}}-\f{g^2(b)}{(1+b)^{l-1}}\right]+\f{1}{l-1}\left[c\int_I\f{g^2(s)}{(1+s)^l}ds+\f{1}{c}\int_I\f{(g'(s))^2}{(1+s)^{l-2}}ds\right].
\end{array}
\end{equation*}
Hence,
$$(l-1-c)\int_I\f{g^2(s)}{(1+s)^l}ds\leq
g^2(a)+\f{1}{c}\int_I\f{(g'(s))^2}{(1+s)^{l-2}}ds.$$ Taking
$c=\f{l-1}{2}$ and noting that $l>2$ yield
\begin{equation*}
\int_I\f{g^2(s)}{(1+s)^l}ds\leq
\f{2}{l-1}g^2(a)+\f{4}{(l-1)^2}\int_I(g'(s))^2ds.
\end{equation*}
This finishes the proof of the inequality.
\end{proof}


{\bf Acknowledgments.}  Part of this work was done when the authors visited the Institute of Mathematical Sciences in The Chinese University of Hong Kong, Shanghai Jiao Tong University,  Sichuan University. They thank all these institutions for their hospitality and support. The work was completed when Chunjing Xie was a K C Wong visiting fellow at The Institut des Hautes \'{E}tudes Scientifiques (IHES) in France. He thanks IHES and K C Wong Education Foundation for their support.

\bibliographystyle{plain}

\end{document}